\numberwithin{equation}{section}
\newtheorem{lemma}[equation]{Lemma}
\newtheorem{proposition}[equation]{Proposition}
\newtheorem{theorem}[equation]{Theorem}
\theoremstyle{definition}
\newtheorem{definition}[equation]{Definition}
\newtheorem{example}[equation]{Example}
\newtheorem{corollary}[equation]{Corollary}
\newtheorem{remark}[equation]{Remark}
\newcommand{\mb}[1]{{\mathbf #1}}
\newcommand{\mc}[1]{{\mathcal #1}}
\newcommand{\norm}[1]{\left\lVert#1\right\rVert}
\DeclareMathOperator*{\hocolim}{hocolim}
\DeclareMathOperator*{\colim}{colim}
\title{Strictifying and taming directed paths in Higher Dimensional Automata}
\author{Martin Raussen} 
\address{Department of
  Mathematical Sciences, Aalborg University, Skjernvej 4A,
  DK-9220 Aalborg {\O}st, Denmark} 
\email{raussen@math.aau.dk}
\thanks{The author thanks Uli Fahrenberg (\'{E}cole Polytechnique, Paris) and Krzysztof Ziemia\'{n}ski (Warsaw) for helpful conversations; Ziemia\'{n}ski particularly for pointing out several uncorrect statements in previous versions. Thanks are also due to the anonymous referees for several hints leading to improvements of the presentation.}
\begin{document}
\begin{abstract}
Directed paths have been used by several authors to describe concurrent executions of a program. Spaces of directed paths in an appropriate state space contain executions with all possible legal schedulings. It is interesting to investigate whether one obtains different topological properties of such a space of executions if one restricts attention to schedulings with ``nice'' properties, eg involving synchronizations. This note shows that this is not the case, ie that one may operate with nice schedulings without inflicting any harm.

Several of the results in this note had previously been obtained by Ziemia\'{n}ski in \cite{Ziemianski:17,Ziemianski:19}. We attempt to make them accessible for a wider audience by giving an easier proof for these findings by an application of quite elementary results from algebraic topology; notably the nerve lemma.
\end{abstract}
\subjclass{68Q85, 55P10, 55U10}
\keywords{Higher Dimensional Automata, d-path, strict, tame, serial, parallel, homotopy equivalence, nerve lemma}
\maketitle

\section{Introduction} 
\subsection{Schedules in Higher Dimensional Automata} Higher Dimensional Automata (HDA) were introduced by V.\ Pratt \cite{Pratt:91} back in 1991 as a model for concurrent computation. Mathematically, HDA can be described as (labelled)
pre-cubical or $\Box$-sets (cf Definition \ref{def:d}). Those are obtained by glueing individual cubes of various dimensions together; 
directed paths corresponding to a $\Box$-set respect the natural partial
order in each cube of the model. These directed paths correspond to lawful schedules/executions of a concurrent computation; and paths that are homotopic in a directed sense (d-homotopic, cf \cite{FGHMR:16}), will always lead to the same result.

Compared to other well-studied concurrency models like labelled
transition systems, event structures, Petri nets etc., it has been shown by R.J.~van
Glabbeek \cite{Glabbeek:06} that Higher Dimensional Automata have the
highest expressivity (up to history-preserving bisimilarity); on the other hand, they are certainly less
studied and less often applied so far. Recently, even more general partial Higher Dimensional Automata have been proposed and studied \cite{FL:15,Dubut:19}. 

It is not evident which paths one should admit as directed paths: It is obvious that they should progress along each axis in each facet of the HDA; the time flow is not reversible. This is reflected in the notion of a \emph{d-path} on such a complex. One may ask, moreover, that processes synchronize after a step (either a full step or an idle step) has been taken. This is what \emph{tame d-paths} have to satisfy, on top. A natural question to ask is whether one can perform the same computations (and obtain the same results) according to whether synchronization is requested all along or not.

It has been shown by K.\ Ziemia\'{n}ski \cite{Ziemianski:17, Ziemianski:19} that the synchronization request has no essential significance: The spaces of directed paths and of tame d-paths between two states are always \emph{homotopy equivalent}. This has two consequences: On the one hand, one may, without global effects, relax the computational model and allow quite general parallel compositions. On the other hand, in the analysis of the schedules on a HDA, one may restrict attention to tame d-paths, ie mandatory synchronization; these are combinatorially far easier to model and to analyze. 

\subsection{Posets, poset categories, and algebraic topology}
Many (sets of) schedules can be formulated in the language of series-parallel pomsets (partially ordered multisets of events). Tame d-paths ``live in'' \emph{serial} compositions of simple Higher Dimensional Automata consisting of a single cube each. General d-paths underpin more complicated schedules, for which \emph{parallel} composition is involved in the description; cf eg \cite{FM:09}  for a detailed description of finite step transition systems accepting pomset languages and \cite{FJST:19} for newer developments.

In this paper, we are not interested in the analysis of individual paths/schedules, but in the analysis of the space of \emph{all} schedules from a start state to an end state, equipped with a natural topology. It turns out that the way subspaces of schedules are glued together is essentially the same, whether synchronization is mandatory or not.

In that line of argument, posets enter the scene in a different manner: We divide the space of all executions (d-paths) into easy-to-analyze subspaces; for tame d-paths, for example, we simply fix a sequence of faces that they 
are kept in. Refinement is a partial order relation on these face sequences, and we will exploit the combinatorial/topological properties of the poset category of face sequences (called cube chains \cite{Ziemianski:17, Ziemianski:19}).

The use of methods from algebraic topology in the analysis of concurrency properties has been advocated in eg \cite{FGR:06,Grandis:09,FGHMR:16} to which we refer the reader for details. In this paper, we will (apart from the proof of Proposition \ref{prop:general}) only apply one important result from algebraic topology, the so-called nerve lemma, cf Theorem \ref{thm:nl}. At a first glance, one may say that it allows to apply a divide and conquer strategy: Cut a space into subspaces that are topologically trivial (contractible); also all non-empty intersections of such are assumed contractible. Then all essential information (up to homotopy equivalence) is contained in the way these subspaces are glued together. That glueing can be described by way of a simplicial complex, the \emph{nerve} of the associated poset category. If the posets associated to different spaces are (naturally) isomorphic, then their nerves and hence the spaces they describe are homotopy equivalent.
 
\section{Definitions and results}
\subsection{Definitions} 
We start with some notation: The unit interval $[0,1]$ is denoted by $I$. For two topological spaces $X$ and $Y$, we let $Y^X$ denote the space of all continuous maps from $X$ to $Y$ equipped with the compact open-topology. For an interval $J=[a,b]\subset\mb{R},\; a<b$, an element $p\in X^J$ is called 
a \emph{path} in $X$. A path $\varphi\in J_1^{J_2}$ in an interval $J_1$ defines a \emph{reparametrization map} $X^{J_1}\to X^{J_2}, p\mapsto p\circ\varphi$.

Let $p_0:[t_0,t_1]\to X$ and $p_1:[t_1,t_2]\to X$ denote two paths with $p_0(t_1)=p_1(t_1)$. Their \emph{concatenation} at $t_1$ is denoted $p_0*_{t_1}p_1:[t_0,t_2]\to X$.
\begin{definition}\label{def:d}
\begin{enumerate}
\item \cite{Grandis:01,Grandis:09} A \emph{d-space} consists of a topological space $X$ together with a subspace $\vec{P}(X)\subset X^I$ of paths in $X$ that contains the constant paths, is closed under concatenation and under the action of the monoid\\ $\vec{P}(I):=\{ p: I\to I,\; t\le t'\Rightarrow p(t)\le p(t')\}$ of \emph{increasing = non-decreasing} reparametrizations $p: I\to I$ under composition. Elements of $\vec{P}(X)$ are called \emph{d-paths}.
\item For $x,y\in X$, we let $\vec{P}(X)_x^y=\{ p\in\vec{P}(X)|\; p(0)=x, p(1)=y\}$ denote the subspace of all d-paths from $x$ to  $y$.
\item A continuous  map $f:X\to Y$ is called a \emph{directed} map if $f(\vec{P}(X))\subset\vec{P}(Y)$, ie if it maps d-paths in $X$ into d-paths in $Y$.
  \item Let $J=[a,b]\subset\mb{R}$ denote an interval ($a<b$) and let $\varphi: J\to I$ denote any \emph{increasing homeomorphism}. Then $\vec{P}_J(X):=\{ p\circ\varphi |\; p\in\vec{P}(X)\}$ -- independent of the choice of $\varphi$.
\end{enumerate}
\end{definition}

In applications to concurrency, the d-spaces under consideration are usually directed $\Box$-sets, or rather their geometric realizations \cite{Pratt:91, Glabbeek:91, Glabbeek:06, FGHMR:16}:
 
\begin{definition}\label{df:pcs}
\begin{enumerate}
\item $\Box^1 := (I,\vec{P}(I))$, cf Definition \ref{def:d}(1); $\Box^n:=(I^n,(\vec{P}(I)^n)$. Hence d-paths in $\Box^n$ have non-decreasing coordinate paths. 
\item A $\Box$\emph{-set} $X$ (also called a pre-cubical or semi-cubical set) is a sequence of disjoint sets $X_n,\; n\ge 0$; equipped, for $n>0$, with \emph{face maps}\\ $d_i^{\alpha}: X_n\to X_{n-1},\; \alpha\in\{0,1\}, 1\le i\le n$, satisfying the pre-cubical relations:\\ $d_i^{\alpha}d_j^{\beta}=d_{j-1}^{\beta}d_i^{\alpha}$ for $i<j$.\\
Elements of $X_n$ are called $n$-\emph{cubes}, those of $X_0$ are called \emph{vertices}.\\
Iterated face maps $d_1^0\circ\dots d_1^0: X_n\to X_0$, resp.\ $d_1^1\circ\dots d_1^1: X_n\to X_0$ project an $n$-cube $c$ to its \emph{source} vertex $c_{\mb{0}}$ and \emph{target} vertex $c_{\mb{1}}$.
\item  A $\Box$-set $X$ is called \emph{proper} \cite{Ziemianski:17} if for every pair $x_0, x_1\in X_0$ of vertices there exists at most one cube with bottom vertex $x_0$ and top vertex $x_1$.
\item A $\Box$-set is called \emph{non-self-linked} \cite{FGHMR:16} if every cube $c\in X_n$ has $\binom{n}{k}2^{n-k}$ different iterated faces in $X^k$ (ie, iterated faces agree if and only they do so because of the pre-cubical relations).
\item The \emph{geometric realization} of a pre-cubical set $X$ is the space
\[|X|=\bigcup_{n\ge 0}X_n\times I^n/_{[d_i^{\alpha}(c),x]\sim [c,\delta_i^{\alpha}(x)]}\] 
with $\delta_i^{\alpha}(x_1,\dots ,x_{n-1})=(x_1,\dots, x_{i-1}, \alpha, x_i,\dots , x_{n-1})$.
\end{enumerate}
\end{definition}

As far at the author is aware of, Higher-Dimensional Automata in the concurrency literature are often proper and non-self-linked. 

Speaking about a cube $c$ in $X$ (or rather in $|X|$; we will often suppress $||$ from the notation), we mean actually the image of the quotient map $\{ c\}\times I^{\dim c}\hookrightarrow\bigcup_{n\ge 0}X_n\times I^n\downarrow |X|$. If $X$ is non-self-linked, then this map is a homeomorphism onto its image in $X$; if not, then it may identify points on the boundary of $I^{\dim c}$. 

What are the \emph{directed} paths in the geometric realization of a $\Box$-set?
 \begin{definition}\label{def:present}
    \begin{enumerate}
    \item A path $p:J\to I$ on an interval $J$  is called \emph{strictly} increasing if it is increasing and moreover: $p(t)=p(t')\Rightarrow t=t'$ or $p(t)=0$ or $p(t)=1$.
    \item A path $p=(p_1,\dots ,p_n): J\to I^n$ on an interval $J$ is called
      \emph{(strictly) increasing} if every component $p_i$ is (strictly)
      increasing. 
    \item  Let $X$ denote a $\Box$-set. A path $p\in X^I$ is called a \emph{d-path} if it admits a presentation \cite[2.6]{Ziemianski:19} $[c_1;\beta_1]*_{t_1}[c_2;\beta_2]*_{t_2}\dots *_{t_{l-1}}[c_l;\beta_l]$ consisting of 
    \begin{itemize}
    \item a sequence of real numbers  $0=t_0\le t_1\le\dots t_{i-1}\le t_i\le\dots\le t_l=1$,
 \item a sequence $(c_i)$ of cubes in $X$, 
 \item a sequence $(\beta_i)\in\vec{P}_{[t_{i-1},t_i]}(I^{\dim c_i}),\; 1\le i\le l,\;$ of \emph{increasing} paths $\beta_i$
 \end{itemize} such that $p(t)=[c_i;\beta_i(t)], t_{i-1}\le t\le t_i$; i.e., on this interval, $p=q_i\circ\beta_i$ with $q_i: I^{\dim c_i}\to c_i$ the resp.\ quotient map.
    
  \item A d-path
    $p:I\to X$ is called \emph{strictly} directed if there exists
    a presentation $p=[c_1;\beta_1]*_{t_1}[c_2;\beta_2]*_{t_2}\dots *_{t_{l-1}}[c_l;\beta_l]$ with \emph{strictly} increasing paths $\beta_i: [t_{i-1},t_i]\to I^{\dim c_i}$.
\item A directed path $p:I\to X$ is called \emph{tame} if the
  subdivision in (3) above can be chosen such that $p(t_i)$ is a \emph{vertex} for
  every $0\le i\le l$. A path that is strictly directed and tame is
  called \emph{strictly tame}.
 \end{enumerate}  \end{definition}
 Observe that we allow d-paths that include non-trivial directed \emph{loops}.
 
\begin{figure}[h]
\begin{tikzpicture}
\draw (0,0) -- (2,0) -- (2,4) -- (0,4) -- (0,0);
\draw (0,2) -- (2,2);
\draw (3,0) -- (5,0) -- (5,4) -- (3,4) -- (3,0);
\draw (3,2) -- (5,2);
\draw (6,0) -- (8,0) -- (8,4) -- (6,4) -- (6,0);
\draw (6,2) -- (8,2);
\draw (9,0) -- (11,0) -- (11,4) -- (9,4) -- (9,0);
\draw (9,2) -- (11,2);
\draw[->, line width=0.6mm, color=blue] (0,0) -- (1,1); 
\draw[->, line width=0.6mm, color=blue] (1,1) -- (1,3);
\draw[->, line width=0.6mm, color=blue] (1,3) -- (2,4);
\draw[->, line width=0.6mm, color=blue] (3,0) -- (3.8,1);
\draw[->, line width=0.6mm, color=blue] (3.8,1) -- (4.2,3);
\draw[->, line width=0.6mm, color=blue] (4.2,3) -- (5,4);
\draw[->, line width=0.6mm, color=blue] (6,0) -- (7,1);
\draw[->, line width=0.6mm, color=blue] (7,1) -- (7,1.5);
\draw[->, line width=0.6mm, color=blue] (7,1.5) -- (8,2);
\draw[->, line width=0.6mm, color=blue] (8,2) -- (8,4);
\draw[->, line width=0.6mm, color=blue] (9,0) -- (9.8,1);
\draw[->, line width=0.6mm, color=blue] (9.8,1) --(11,2);
\draw[->, line width=0.6mm, color=blue] (11,2) -- (11,4);
\end{tikzpicture}
\caption{d-paths in a cubical complex consisting of two squares: directed, strict, tame, tame and strict.}
\end{figure}
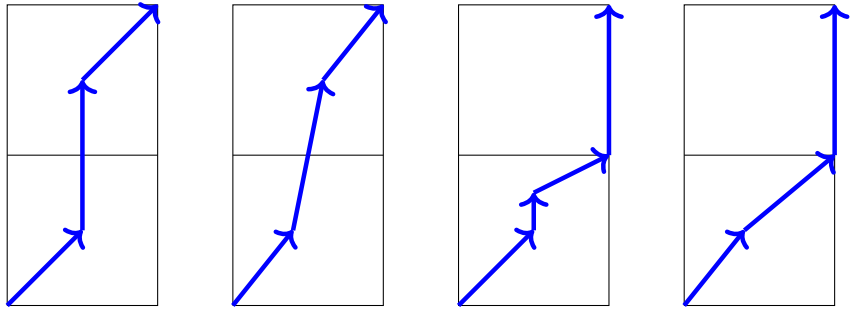

\begin{example}\label{ex:cubset}
\begin{enumerate}
\item A Euclidean cubical complex \cite{RZ:14} $K$ is a subset of Euclidean space $\mb{R}^n$ that is a union of elementary cubes $\prod [k_i,k_i+e_i]\subset\mb{R}^n$ with $k_i\in\mb{Z}$ and $e_i\in\{0 , 1\}$. The maximal cubes in the cubical set that it realizes can be described by a pair of bottom and top vertices $(\mb{k},\mb{l})\in\mb{Z}^n\times\mb{Z}^n$ with $0\le l_i-k_i\le 1$. A Euclidean cubical complex is obviously proper and non-self-linked. Euclidean cubical complexes arise as models for $PV$-programs (cf eg \cite{FGHMR:16}).
\item For an example of a non-proper cubical set, consider the cubical set $X$ glued from two squares (2-cubes) along a common boundary (consisting of four oriented edges and of four vertices). Its geometric realization is homeomorphic to a 2-dimensional sphere; it is less obvious how to describe the directed paths on this sphere via the homeomorphism. The space of all directed paths from the common source to the common target of both squares is actually homotopy equivalent to a \emph{circle} that may be represented by d-paths through the \emph{union of the diagonals} of the two squares.
\item \cite{Ziemianski:20} The $\Box$-set $Z_n$ with exactly one cube in every dimension $k\le n$ is obviously not proper and self-linked. For a description of d-paths in the geometric realization of this space, cf Example \ref{ex:onecube}.
\item \cite{Ziemianski:20} The $\Box$-set $Q^n$ has $(n-k+1)$ $k$-cubes $c_0^k,\dots , c_{n-k}^k$ and face maps $d^{\alpha}_ic^k_j=c^{k-1}_{j+i+\alpha}$. It arises from the cube $I^n$ by identifying all faces spanned by two vertices with $i$, resp.\ $k+i$ coordinates $1$ with each other ($0\le i\le n-k$). This $\Box$-set is proper, but also self-linked.
\end{enumerate}
\end{example}

\newpage\subsection{Interpretation}
\subsubsection{Different types of schedulings}

\begin{description}
\item[D-(irected)] paths correspond to executions of a concurrent program
  - without the possibility to let one or several processes run
  backwards in time.
\item[Strict] d-paths correspond to programs where a particular process only may
  be idle at a vertex in the program (once a step is fully taken); between steps it needs to move forward in
  time at ``positive speed''.
\item[Tame] d-paths correspond to programs where processes need to
  synchronize after every step before progressing. A number of processes may stay
  idle inbetween. Hence at synchronization events, a process has taken a full step or it has stayed idle. 
  \item[Strict tame] d-paths correspond to programs combining both properties.
\end{description}

Our main result in Theorem \ref{thm} below states that the spaces of schedulings, regardless of the restrictions above, will have the same topological
properties in all four cases.

In the final Section \ref{ss:kink} we show that one may restrict  the space of tame d-paths even further, up to homotopy equivalence: It is enough to consider PL d-paths that are piecewise linear with kink points at certain hyperplanes.

\subsubsection{A simple illustrative example}\label{sss:ex}
We refer to Figure \ref{fig:hex}. Let $X=\partial I^3$ be the $\Box$-set corresponding to the boundary of a 3-cube. It has twelve edges: four parallel to each of the axes and labelled $x, y$ resp.\ $z$ and six two-dimensional facets: two parallel to each of the three coordinate planes and labelled $xy, xz$ resp.\ $yz$.

The image of every d-path from the bottom vertex $\mb{0}$ to the top vertex $\mb{1}$ is contained in two subsequent square facets; the image of every \emph{tame} d-path from $\mb{0}$ to $\mb{1}$ is contained in a pair of an edge and a facet. Taking care of intersections, one arrives in both cases at a category with geometric realization in form of a hexagon; homotopy equivalent to the circle $S^1$. The space of all d-paths (whether tame or not) in $\partial I^n$ is indeed homotopy equivalent to $S^{n-2}$. 

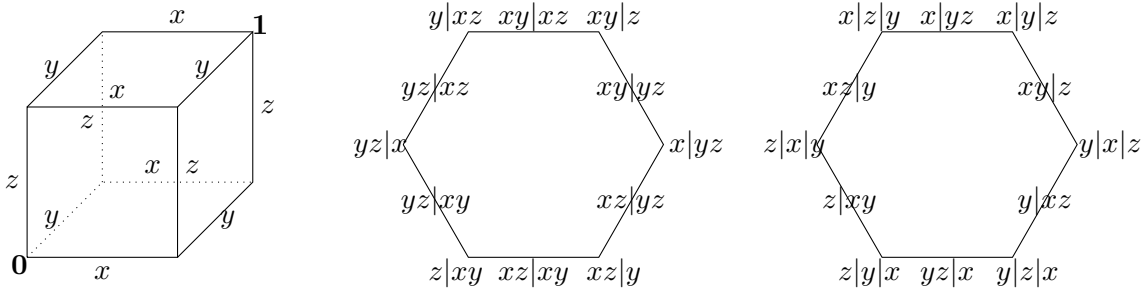
\begin{figure}[h]
\begin{center}
\begin{tikzpicture}
\draw (0,0) -- (2,0) -- (2,2) -- (0,2) -- (0,0);
\draw (2,0) -- (3,1) -- (3,3) -- (2,2);
\draw (0,2) -- (1,3) -- (3,3);
\draw [dotted] (0,0) -- (1,1) -- (3,1);
\draw [dotted] (1,1) -- (1,3);
\node at (1,-0.2) {$x$};
\node at (1.2,2.2) {$x$};
\node at (1.67,1.2) {$x$};
\node at (2,3.2) {$x$};
\node at (0.33,0.5) {$y$};
\node at (2.67,0.5) {$y$};
\node at (0.33,2.5) {$y$};
\node at (2.33,2.5) {$y$};
\node at (-0.2,1) {$z$};
\node at (2.2,1.2) {$z$};
\node at (0.8,1.8) {$z$};
\node at (3.2,2) {$z$};
\node at (-0.1,-0.1) {$\mb{0}$};
\node at (3.1,3.1) {$\mb{1}$};
\draw (5,1.5) -- (5.866,0) -- (7.6,0) -- (8.46,1.5) -- (7.6,3) -- (5.866,3) -- (5,1.5);
\draw (10.5,1.5) -- (11.366,0) -- (13.1,0) -- (13.96,1.5) -- (13.1,3) -- (11.366,3) -- (10.5,1.5);
\node at (6.733,3.2) {$xy|xz$};
\node at (5.433,2.25) {$yz|xz$};
\node at (5.433,0.75) {$yz|xy$};
\node at (6.733,-0.2) {$xz|xy$};
\node at (8.03,0.75) {$xz|yz$};
\node at (8.03,2.25) {$xy|yz$};
\node at (4.7,1.5){$yz|x$};
\node at (8.9,1.5){$x|yz$};
\node at (5.7,3.2){$y|xz$};
\node at (7.8,3.2){$xy|z$};
\node at (5.7,-0.2){$z|xy$};
\node at (7.8,-0.2){$xz|y$};
\node at (12.233,3.2) {$x|yz$};
\node at (10.933,2.25) {$xz|y$};
\node at (10.933,0.75) {$z|xy$};
\node at (12.233,-0.2) {$yz|x$};
\node at (13.53,0.75) {$y|xz$};
\node at (13.53,2.25) {$xy|z$};
\node at (10.2,1.5){$z|x|y$};
\node at (14.4,1.5){$y|x|z$};
\node at (11.2,3.2){$x|z|y$};
\node at (13.3,3.2){$x|y|z$};
\node at (11.2,-0.2){$z|y|x$};
\node at (13.3,-0.2){$y|z|x$};
\end{tikzpicture}
\end{center}
\caption{$X=\partial I^3$, and spaces of d-paths, resp.\ of tame d-paths}\label{fig:hex}
\end{figure}

The $\Box$-set $X=\partial I^3$ models the situation where a shared resource can serve two out of three processes but not all of them at the same time. Remark that a sequence like $xy|xz$ (on top of Figure \ref{fig:hex}) of two subsequent facets can be interpreted as $x\parallel (y|z)$, ie $x$ and $y|z$ are executed concurrently. Allowing this may be very convenient and speed up a concurrent execution. For an analysis of the consequences, it is reassuring to realize that the space of schedules between two given states is qualitatively the same regardless whether one allows parallel execution over a series of steps (like in $x\parallel (y|z)$) or only over one step at a time (like in $yz$).  

\subsection{Main result} Let $X$ denote a $\Box$-set with finitely many cubes.
For a given pair of vertices $x^-, x^+\in X_0$, we let
$\vec{P}(X)_{x^-}^{x^+}, \vec{S}(X)_{x^-}^{x^+},
\vec{T}(X)_{x^-}^{x^+},$ resp.\ $\vec{ST}(X)_{x^-}^{x^+}$
denote the spaces of directed, strictly directed, tame, and strictly
tame dipaths from $x^-$ to $x^+$ (considered as subspaces of $X^I$
with the compact-open topology). Inclusion maps lead to the commutative diagram
\begin{equation}
  \label{eq:incl}
  \xymatrix{& |\mc{C}(X)_{x^-}^{x^+}| &\\
\vec{ST}(X)_{x^-}^{x^+}\ar[ur]^{\textcircled{5}}\ar@{^{(}->}[rr]^{\textcircled{3}}\ar@{^{(}->}[d]^{\textcircled{1}} & & \vec{S}(X)_{x^-}^{x^+}\ar@{^{(}->}[d]^{\textcircled{2}}\ar[ul]_{\textcircled{6}}\\
\vec{T}(X)_{x^-}^{x^+}\ar@{^{(}->}[rr]^{\textcircled{4}} & & \vec{P}(X)_{x^-}^{x^+}.}
\end{equation}
that also contains (maps into) the nerve of a poset-category $\mc{C}(X)_{x^-}^{x^+}$ explained in the sketch of the proof of our result:

\begin{theorem}\label{thm}
\begin{enumerate}
\item All inclusion maps in (\ref{eq:incl}) are homotopy equivalences. 
\item For a proper non-self-linked $\Box$-set $X$ (cf Definition \ref{df:pcs}(2)), all path spaces are homotopy equivalent to the nerve of the category $\mc{C}(X)_{x^-}^{x^+}$.
\end{enumerate}
\end{theorem}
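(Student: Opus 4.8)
The plan is to produce, for each of the four path spaces, a cover by contractible subspaces indexed by a combinatorial poset category, to read off all homotopy types from the nerve lemma (Theorem~\ref{thm:nl}), and to deduce that the inclusions in~(\ref{eq:incl}) are homotopy equivalences from the fact that they respect these covers. Let $\mc{C}=\mc{C}(X)_{x^-}^{x^+}$ be the poset category of \emph{cube chains} from $x^-$ to $x^+$: objects are finite sequences $\mb{c}=(c_1,\dots,c_l)$ of cubes of $X$ with $\dim c_i\geq 1$, $(c_1)_{\mb{0}}=x^-$, $(c_l)_{\mb{1}}=x^+$, $(c_i)_{\mb{1}}=(c_{i+1})_{\mb{0}}$, ordered by \emph{refinement} ($\mb{c}\preceq\mb{c}'$ if $\mb{c}'$ arises from $\mb{c}$ by repeatedly subdividing some $c_i\cong I^{n_i}$ at an interior vertex, into its face in a set of directions $A$ followed by its face in the complementary directions). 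Finiteness of $X$ makes $\mc{C}$ finite, and when $X$ is proper and non-self-linked each cube chain is recorded faithfully by its combinatorial data, so $\mc{C}$ is then an honest poset. For $\mb{c}\in\mc{C}$ write $\vec{T}_{\mb{c}}$, resp.\ $\vec{ST}_{\mb{c}}$, for the tame, resp.\ strictly tame, d-paths \emph{carried by} $\mb{c}$ (admitting a presentation with cube sequence $\mb{c}$); for $\vec{S}$ and $\vec{P}$ one must also allow presentations whose subdivision points $p(t_i)$ need not be vertices, which enlarges the index category to a poset $\mc{C}'$ of \emph{generalized face sequences} (consecutive faces overlapping in a common subface), with subspaces $\vec{S}_{\mb{c}},\vec{P}_{\mb{c}}$. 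Since every d-path has a presentation, each family covers the corresponding path space.

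Next I would check that these are good covers. Each piece $\vec{F}_{\mb{c}}$ contains, and deformation retracts onto, the piecewise-diagonal path along $\mb{c}$: fixing $\mb{c}$, interpolate each component path linearly towards the diagonal inside its cube $I^{n_i}$ and interpolate the subdivision times towards their standard values; this stays increasing -- resp.\ strictly increasing, for the strict spaces -- and carried by $\mb{c}$, so $\vec{F}_{\mb{c}}$ is contractible. A nonempty finite intersection $\vec{F}_{\mb{c}_1}\cap\dots\cap\vec{F}_{\mb{c}_k}$ is again a member of the family: by the combinatorial fact -- here properness and non-self-linkedness enter -- that a finite family of cube chains admits a common refinement as soon as it does pairwise, such an intersection equals $\vec{F}_{\mb{c}^{*}}$ for the least common refinement $\mb{c}^{*}=\bigvee_j\mb{c}_j$, and is empty when no common refinement exists. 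Thus the cover is essentially closed under nonempty finite intersections, and the nerve lemma identifies each $\vec{F}$ with the order complex of its poset of pieces; for the tame and strictly tame spaces this poset is $\mc{C}$, so $\vec{T}(X)_{x^-}^{x^+},\vec{ST}(X)_{x^-}^{x^+}\simeq|\mc{C}(X)_{x^-}^{x^+}|$ when $X$ is proper and non-self-linked.

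To finish I would compare the pieces across the inclusions, each of which maps $\vec{F}_{\mb{c}}$ into the corresponding piece of the larger space. For \textcircled{1} ($\vec{ST}\hookrightarrow\vec{T}$) and \textcircled{2} ($\vec{S}\hookrightarrow\vec{P}$), source and target are indexed over the same poset and the passage from increasing to strictly increasing representatives is harmless (a carried d-path can be perturbed to a strictly increasing one without leaving its piece), so these induce isomorphisms of nerves and are homotopy equivalences -- the ``strictifying'' half. The ``taming'' half, \textcircled{3} and \textcircled{4}, requires comparing $\mc{C}$ with $\mc{C}'$: that the inclusion $\mc{C}\hookrightarrow\mc{C}'$ induces a homotopy equivalence of nerves -- equivalently, that the space of general d-paths tames down onto $\vec{T}(X)_{x^-}^{x^+}$ -- is (the content of) Proposition~\ref{prop:general}, carried out by a hands-on taming deformation pushing the transition points of a general d-path onto vertices, supplemented by a comparison of the two index categories; this is the one place where an argument beyond the formal nerve lemma is used. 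Granting this, part~(1) follows, and for proper non-self-linked $X$ all four nerves coincide with $|\mc{C}(X)_{x^-}^{x^+}|$, which gives part~(2), with \textcircled{5} and \textcircled{6} the resulting nerve-lemma maps.

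I expect the main difficulty to be Proposition~\ref{prop:general} and the point-set analysis behind it: unlike the tame pieces, whose contractibility is immediate from convexity, a piece $\vec{P}_{\mb{c}}$ over a generalized face sequence is only fibred -- over the choice of transition points -- with contractible fibres, and degenerates when a transition point reaches the boundary of a common subface, so establishing its contractibility and relating the two index posets both need a genuinely geometric argument. Secondary technical points are the combinatorial ``pairwise implies global'' lemma for common refinements of cube chains (which genuinely uses properness and non-self-linkedness, hence the hypotheses in part~(2)) and checking that the path spaces, with the compact-open topology, are well-behaved enough (e.g.\ of CW homotopy type) for the chosen version of the nerve lemma to apply.
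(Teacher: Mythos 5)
Your overall strategy -- cover the path spaces by contractible pieces indexed by the refinement poset of cube chains and invoke the nerve lemma -- is indeed the paper's, and your combinatorial point about coarsest common refinements for proper non-self-linked $X$ is essentially Proposition \ref{prop:refine}(6). But there are two genuine gaps. First, the cover of $\vec{T}(X)_{x^-}^{x^+}$ by the pieces $\vec{T}_{\mb{c}}$ of paths \emph{carried by} $\mb{c}$ is not an \emph{open} cover: the requirement that the subdivision points $p(t_i)$ be vertices is a closed condition, and arbitrarily close to such a path there are tame paths whose only presentations use different (finer or incomparable) chains. Theorem \ref{thm:nl} requires openness, and the paper explicitly notes that the nerve lemma cannot be applied to this cover. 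The fix, which is the technical heart of the paper, is to fatten each piece to the open set $\vec{S}^C_{\mb{c}}$ of paths subordinate to the \emph{collar} of $\mb{c}$ (Proposition \ref{prop:open}) and to prove these open sets contractible by an explicit taming deformation retraction onto $\vec{ST}_{\mb{c}}$ (Proposition \ref{prop:contr}, using the hypersurfaces $m_j(x)=1$ to locate canonical transition times). Your proposal never introduces collars, so the nerve-lemma step does not go through as written.

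Second, you cannot index or cover $\vec{P}(X)_{x^-}^{x^+}$ directly by cube-chain-like data: Figure \ref{fig:non} exhibits a d-path that is not subordinate to the collar of \emph{any} cube chain, and your poset $\mc{C}'$ of ``generalized face sequences'' is left undefined at exactly the point where it would have to absorb such paths. The paper's order of operations is the reverse of yours: it first proves \textcircled{1} and \textcircled{2} for \emph{arbitrary} $\Box$-sets by a direct global strictification homotopy $\mc{S}(s,p)(t)=\mb{F}(st,p(t))$ built from the flow of Lemma \ref{lem:strict} (Proposition \ref{prop:strict}) -- no nerve input, hence no properness hypothesis -- and only then applies the nerve lemma to the \emph{strict} spaces $\vec{S}$ and $\vec{ST}$, where Proposition \ref{prop:vs}(2) guarantees that every path does lie in the collar of a genuine cube chain (strictness is what forces a path to cross each middle hyperplane at most once, which is what makes $\mb{c}(p)$ well defined). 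The inclusion \textcircled{4} is then obtained by two-out-of-three, not by a cover of $\vec{P}$. Note also that Proposition \ref{prop:general} compares $\vec{ST}\hookrightarrow\vec{S}$ over the single category $\mc{C}(X)_{x^-}^{x^+}$ via the projection and homotopy lemmas; it is not a comparison of two index categories as you describe.
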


\begin{proof}[Overview proof]
It is shown in Proposition \ref{prop:strict} by a cube-wise strictification construction that the maps with labels $\textcircled{1}$ and $\textcircled{2}$ are homotopy equivalences.

For a $\Box$-set $X$ and chosen end points $x^-, x^+$, we define a poset category $\mc{C}(X)_{x^-}^{x^+}$, cf Section \ref{ss:pathcol}: An object of that category is a \emph{cube chain} (cf Definition \ref{def:cc}) in $X$ connecting $x^-$ with $x^+$; this a sequence of cubes such that the top vertex of each cube in that sequence agrees with the bottom vertex of the subsequent cube. Morphisms in $\mc{C}(X)_{x^-}^{x^+}$ correspond then to \emph{refinements} of cube chains; for details consult Section \ref{ss:pathcol}.

We prove in Proposition \ref{prop:nerveC} for paths in a \emph{proper non-self-linked} $\Box$-set $X$ (cf Definition \ref{df:pcs}(2-3)) that both $\vec{S}(X)_{x^-}^{x^+}$ and $\vec{ST}(X)_{x^-}^{x^+}$ are homotopy equivalent to the nerve of $\mc{C}(X)_{x^-}^{x^+}$ (indicated by the maps $\textcircled{5}$ and $\textcircled{6}$) and can therefore deduce that also $\textcircled{3}$ is a homotopy equivalence: Both spaces have a common underlying combinatorial structure!  

Our proof uses only the classical nerve lemma, cf Theorem \ref{thm:nl}, and a transparent taming construction (Proposition \ref{prop:contr}) for strict d-paths subordinate to the \emph{collar} of a cube chain (cf Definition \ref{def:subo}). The remaining inclusion $\textcircled{4}$ is a homotopy equivalence as well by the 2-out-of-3 property for homotopy equivalences. In the more involved case of a \emph{general} $\Box$-set $X$, we show in Proposition \ref{prop:general} that $\textcircled{3}$ is a homotopy equivalence using the projection lemma and the homotopy lemma (cf e.g.\ \cite[Theorem 15.19]{Kozlov:08} and \cite[Theorem 15.12]{Kozlov:08}), both underlying the proof of the nerve lemma.
\end{proof}

\begin{remark} Many of the results in this paper are not new. Ziemianski proved in \cite{Ziemianski:19}, using elaborate homotopy theory tools, that the space of tame d-paths $\vec{T}(X)_{x^-}^{x^+}$ is always homotopy equivalent to the nerve of a more intricate category $Ch(X)$ of cube chains, even for a general $\Box$-set $X$. This \emph{Reedy category} (cf \cite[Definition 5.2.1]{Hovey:99}) takes care of identifications on the boundary of cubes in a cube chain. Moreover, he shows by an ingenious global taming construction, that $\textcircled{4}$ is a homotopy equivalence. Apart from including spaces of strictly increasing paths (necessary in our proof for taming), this note presents a far more elementary argument that, for proper $\Box$-sets, only uses the nerve lemma.
\end{remark}

\section{Strictification}\label{s:strict}
\subsection{Strictifying directed maps on $\Box$-sets}\label{ss:vf}
\begin{lemma}\label{lem:strict}
There exists a (continuous) directed map $F: \Box^2\to\Box^1$ (cf Definition \ref{def:d}(3) and \ref{df:pcs}(1)) with the following properties:
\begin{enumerate}
\item $x\in I\Rightarrow F(0,x)=x$. 
\item $ t\in I\Rightarrow F(t,0)=0, F(t,1)=1$.
\item $0<x<1,\; t\in I\Rightarrow 0<F(t,x)<1$.
\item $x<y,\; t\in I\Rightarrow F(t,x)<F(t,y)$.
\item $s, t\in I,\; s<t,\; 0<x<1\Rightarrow F(s,x)<F(t,x)$.
\end{enumerate} 
\end{lemma}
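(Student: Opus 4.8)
The plan is to construct $F$ explicitly by an interpolation formula that is linear (or at least monotone) in the second variable on a skeleton of subintervals and blends continuously in $t$. First I would record the constraints: along $\{0\}\times I$ the map must be the identity (property (1)); along $I\times\{0\}$ and $I\times\{1\}$ it must be constantly $0$ and $1$ (property (2)); it must carry the open interior $I\times(0,1)$ into $(0,1)$ (property (3)); it must be strictly increasing in $x$ for each fixed $t$ (property (4)); and it must be strictly increasing in $t$ for each fixed $x\in(0,1)$ (property (5)). Directedness, i.e. $F(\vec{P}(\Box^2))\subset\vec{P}(\Box^1)$, will follow automatically once $F$ is shown to be increasing (non-decreasing) in each variable separately: an increasing path $(p_1(s),p_2(s))$ composed with such an $F$ is increasing.

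The concrete candidate I would try is
\[
F(t,x)=\frac{(1-x)^{g(t)}\cdot 0 + \text{(something)}}{\cdots},
\]
but cleaner: set $F(t,x)=x^{h(t)}$ is wrong because it fails (1) unless $h(0)=1$, fails (5) in the wrong direction, and is not increasing in $t$. A better choice is to push $x$ toward $1$ as $t$ grows, e.g.
\[
F(t,x)=1-(1-x)^{1+t},
\]
which satisfies $F(0,x)=x$, $F(t,0)=0$, $F(t,1)=1$, maps $(0,1)$ into $(0,1)$, is strictly increasing in $x$ (derivative $(1+t)(1-x)^{t}>0$ on $(0,1)$), and is strictly increasing in $t$ for $0<x<1$ since $\partial_t F = -(1-x)^{1+t}\ln(1-x)>0$ there. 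Continuity and directedness are then immediate. So the main body of the proof is just to exhibit this formula and verify (1)–(5) by elementary calculus, plus the one-line observation that separate monotonicity in each variable gives directedness.

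The step I expect to require the most care is the directedness claim at the boundary, and the verification that property (5) holds uniformly including the limiting behavior as $x\to 0^+$ or $x\to 1^-$: there $\partial_t F\to 0$, so strict inequality $F(s,x)<F(t,x)$ degenerates exactly on the boundary $x\in\{0,1\}$, which is precisely where property (5) is not asserted — so this is consistent, but I would double-check that for every fixed $x$ in the \emph{open} interval the function $t\mapsto F(t,x)$ is genuinely strictly increasing (it is, being $C^1$ with positive derivative on all of $I$). I would also remark that no smoothness is needed — continuity suffices — so any number of equivalent elementary formulas would do; the exponential/power form is merely convenient. Finally I would note that this $F$ is the basic building block used in Proposition~\ref{prop:strict}: applying it cube-by-cube (in each coordinate direction) strictifies an arbitrary directed map, which is why only these five properties are isolated here.
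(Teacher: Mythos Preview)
Your proposal is correct. The overall method---exhibit an explicit formula and verify properties (1)--(5) by elementary calculus, with directedness following from separate monotonicity in each variable---is exactly the paper's approach; only the specific formula differs. The paper realizes $F$ as the time-$t$ flow of the logistic equation $y'=y(1-y)$, namely $F(t,x)=\dfrac{xe^t}{1-x+xe^t}$, whereas you take $F(t,x)=1-(1-x)^{1+t}$. Both choices satisfy (1)--(5) and are continuous on $I\times I$; the ODE picture makes property~(5) (strict forward flow on the open interval with fixed points at $0,1$) conceptually transparent, while your power formula is slightly more elementary to write down and check. Either works equally well for the subsequent strictification construction.
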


\begin{proof}
One way to construct such a directed map is as the restriction of the flow of the differential equation $y'=g(y)$ corresponding to a smooth function $g:I\to\mb{R}$ with $g(0)=g(1)=0$ and $g(t)>0,\; 0<t<1$, e.g. $g(t)=t-t^2$. The restriction of its flow, ie the function given by $F(t,x)=\frac{xe^t}{1-x+xe^t}$, has the required properties.
\end{proof}

We may interpret the map $F$ from Lemma \ref{lem:strict} as a homotopy of d-paths $F: I\times\Box^1\to\Box^1$ and use it to define a diagonal continuous directed homotopy $\mb{F}: I\times\Box^n\to\Box^n$ on the cube $\Box^n$ by $\mb{F}(t; x_1,\dots ,x_n)=(F(t,x_1),\dots ,F(t,x_n))$. Remark that $\mb{F}$ respects all (sub)-faces of $\Box^n$ because of Lemma \ref{lem:strict}(2). Applying this construction cube-wise (the same for every $k$-cube!), we define for every (geometric realization of a) semi-cubical set $X$, a continuous directed map $\mb{F}:I\times X\to X$ that lets all cubes -- and in particular all vertices -- invariant.

Using a directed map $\mb{F}$ as in Lemma \ref{lem:strict}, we define a strictifying map\\ $\vec{S}:\vec{P}(X)_{x^-}^{x^+}\to\vec{S}(X)_{x^-}^{x^+}$ by $\vec{S}(p)(t):=\mb{F}(t,p(t))$.\\ Start and end points $x^-$ and $x^+$ are vertices and therefore unchanged.

\begin{lemma}\label{lem:flow}
Let $p\in\vec{P}(X)_{x^-}^{x^+}$.
\begin{enumerate}
\item If $p(t)\in c$ for some cube $c$ in $X$, then $\vec{S}(p)(t)\in c$ for all $t\in I$.  
\item $\vec{S}(p)\in\vec{S}(X)_{x^-}^{x^+}$.
\item If $p$ is tame, then $\vec{S}(p)$ is (strict and) tame.
\end{enumerate}
\end{lemma}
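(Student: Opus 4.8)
The plan is to reduce all three assertions to the pointwise properties (1)--(5) of the map $F$ from Lemma \ref{lem:strict}, exploiting that $\vec{S}(p)(t)=\mb{F}(t,p(t))$ and that $\mb{F}$ acts diagonally on coordinates and cube-wise on $X$.

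For (1) the argument is combinatorial. Since $\mb{F}$ is defined cube-wise and, by Lemma \ref{lem:strict}(2), respects every face and sub-face of $\Box^n$, the slice $\mb{F}(t,-)$ carries each cube $c$ of $X$ into itself. Writing $p(t)=q_c(x)$ with $x\in I^{\dim c}$ whenever $p(t)\in c$, one gets $\vec{S}(p)(t)=\mb{F}(t,q_c(x))=q_c(\mb{F}(t,x))\in c$; this holds for every $t\in I$ and every such $c$. Applying the same remark to a $0$-cube shows $\mb{F}(t,-)$ fixes each vertex, which (together with $F(0,x)=x$) gives the endpoint identities $\vec{S}(p)(0)=x^-$ and $\vec{S}(p)(1)=x^+$ needed in (2); continuity of $\vec{S}(p)$ is clear, being the composite of $t\mapsto(t,p(t))$ with the continuous $\mb{F}$.

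The real content is the strictness in (2). Starting from a presentation $p=[c_1;\beta_1]*_{t_1}\cdots*_{t_{l-1}}[c_l;\beta_l]$ with increasing $\beta_i$, I would keep the same subdivision $0=t_0\le\cdots\le t_l=1$ and set $\gamma_i(t):=\mb{F}(t,\beta_i(t))$, so that $\vec{S}(p)=[c_1;\gamma_1]*_{t_1}\cdots*_{t_{l-1}}[c_l;\gamma_l]$ (the cut values agree because $\vec{S}(p)$ is a single continuous path). It then suffices to show that for each $i$ and each coordinate index $j$ the function $h(t):=F(t,\beta(t))$, where $\beta:=\beta_{i,j}$, is strictly increasing in the sense of Definition \ref{def:present}(1). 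Monotonicity follows from the chain $F(s,\beta(s))\le F(s,\beta(t))\le F(t,\beta(t))$, using (4) in the second variable and (5) -- or (2) at the endpoints -- in the first. For the strictness clause, suppose $h(s)=h(t)$ with $s<t$; then both inequalities above are equalities, and I would split on the value $\beta(t)$: if $0<\beta(t)<1$, the equality $F(s,\beta(t))=F(t,\beta(t))$ contradicts Lemma \ref{lem:strict}(5); if $\beta(t)=0$ then $\beta(s)=0$ (monotonicity and $\beta\ge 0$), so $h(s)=h(t)=0$; if $\beta(t)=1$ then $h(t)=F(t,1)=1=h(s)$. Hence whenever $h$ repeats a value that value is $0$ or $1$, exactly as Definition \ref{def:present}(1) permits; so $\gamma_i$ is strictly increasing and $\vec{S}(p)\in\vec{S}(X)_{x^-}^{x^+}$.

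For (3), strictness is already granted by (2), and tameness comes for free: pick a presentation of the tame path $p$ whose cut points $p(t_i)$ are all vertices, run the construction of (2) on that very subdivision, and use that $\mb{F}$ fixes vertices to obtain $\vec{S}(p)(t_i)=\mb{F}(t_i,p(t_i))=p(t_i)\in X_0$. Thus the strict presentation built in (2) already has all cut points at vertices, i.e.\ $\vec{S}(p)$ is strictly tame. The only mildly delicate point in the whole argument is the case analysis on $\beta_{i,j}(t)$ above: a subinterval on which $\beta_{i,j}$ sits at an interior value genuinely gets ``un-stuck'' by the flow -- this is precisely where Lemma \ref{lem:strict}(5) is used -- while a subinterval at level $0$ or $1$ stays put but is harmless for the definition of strictly increasing; everything else is bookkeeping about the cube-wise definition of $\mb{F}$.
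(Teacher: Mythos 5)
Your proposal is correct and follows essentially the same route as the paper: reduce everything to the coordinatewise properties (1)--(5) of $F$ from Lemma \ref{lem:strict}, with the strictness of $t\mapsto F(t,\beta_i(t))$ established by exactly the same case split on whether $\beta_i(t)\in\{0,1\}$, and tameness obtained from $\mb{F}$ fixing vertices. Your write-up merely makes explicit a few points the paper leaves implicit (the compatibility $\mb{F}(t,q_c(x))=q_c(\mb{F}(t,x))$ across faces, and the verification that repeated values of the new coordinate paths can only be $0$ or $1$).
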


\begin{proof}
\begin{enumerate}
\item follows from the construction of $\mb{F}$. 
\item Since $\mb{F}$ preserves cubes, we may restrict attention to a segment $[c;\beta]$ -- with $c\in X^n$ and $\beta =(\beta _1,\dots ,\beta^n): I\to I^n$ occuring in a presentation (cf Definition \ref{def:present}(3)) of $p$.  If $t<t'$ then $\beta_i(t)\le \beta_i(t')$.\\ Assume $\beta_i(t)\neq 0,1$. If $\beta_i(t')=1$, then $F(t,\beta_i(t))<1=F(t',\beta_i(t'))$ by Lemma \ref{lem:strict}(2-3). Otherwise, $F(t,\beta_i(t))\le F(t,\beta_i(t'))< F(t',\beta_i(t'))$ by Lemma \ref{lem:strict}(4-5). Hence $\vec{S}(p)$ has a presentation consisting of strict segments.
\item is a consequence of (2) for a path with a tame presentation (cf Definition \ref{def:present}(5)) since the map $\mb{F}$ preserves vertices.
\end{enumerate}
\end{proof}

\begin{lemma}
The map $\vec{S}:\vec{P}(X)_{x^-}^{x^+}\to\vec{S}(X)_{x^-}^{x^+}$ is continuous (in the compact open topologies) for every $\Box$-complex $X$ with source and target $x^-,x^+\in X_0$.
\end{lemma}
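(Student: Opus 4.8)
The plan is to reduce continuity of $\vec{S}$ to continuity of the underlying map $\mb{F}:I\times X\to X$ together with the fact that $\vec{S}(p)(t)=\mb{F}(t,p(t))$ is obtained by precomposing $\mb{F}$ with the "graph" of $p$. First I would recall the standard adjunction/exponential-law facts for the compact–open topology: since $I$ is locally compact Hausdorff, for any spaces $A,B$ the evaluation map $\mathrm{ev}:B^I\times I\to B$ is continuous, and a map $A\to B^I$ is continuous if and only if its adjoint $A\times I\to B$ is continuous. Here the subtlety is that $|X|$ need not be locally compact nor Hausdorff when $X$ is self-linked, so I would note at the outset that it suffices to work with the honest (Hausdorff, locally compact) cube $I^{\dim c}$ cube-wise, or simply invoke that $|X|$ is a CW-complex — hence compactly generated — so the exponential law applies on the relevant subspaces; alternatively one observes that every path $p\in\vec P(X)$ has image in a finite subcomplex, which is compact metrizable.

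The key steps, in order: (i) Observe that $\mb{F}:I\times|X|\to|X|$ is continuous — this is by construction in Section~\ref{ss:vf}, since $\mb{F}$ is defined cube-wise by the continuous formula $F(t,x)=xe^t/(1-x+xe^t)$ and these assemble compatibly across face maps by Lemma~\ref{lem:strict}(2), so the induced map on the quotient $|X|$ is continuous. (ii) Define the "graph" map $\Gamma:\vec P(X)_{x^-}^{x^+}\times I\to I\times|X|$ by $\Gamma(p,t)=(t,p(t))$; its first component is the projection and its second component is the evaluation map $\mathrm{ev}$, which is continuous because $I$ is locally compact Hausdorff. Hence $\Gamma$ is continuous. (iii) The composite $\mb{F}\circ\Gamma:\vec P(X)_{x^-}^{x^+}\times I\to|X|$ sends $(p,t)\mapsto\mb{F}(t,p(t))=\vec S(p)(t)$, and is continuous as a composite of continuous maps. (iv) By the exponential law (adjointing out the locally compact Hausdorff factor $I$), the corresponding map $\vec P(X)_{x^-}^{x^+}\to|X|^I$, $p\mapsto\vec S(p)$, is continuous; by Lemma~\ref{lem:flow}(2) it lands in $\vec S(X)_{x^-}^{x^+}$, which carries the subspace topology, so $\vec S$ is continuous as a map into $\vec S(X)_{x^-}^{x^+}$.

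The main obstacle I anticipate is purely point-set: justifying the exponential law when the target $|X|$ is not Hausdorff/locally compact (the self-linked case). I would handle this either by restricting to finite subcomplexes (every element of $\vec P(X)_{x^-}^{x^+}$ and every convergent net thereof stays inside a fixed finite subcomplex, since $X$ has finitely many cubes — which is in fact assumed in the theorem), making all spaces compact metrizable, or by the cleaner route of noting that it is enough to check continuity of $p\mapsto\vec S(p)$ at each point using a subbasic open set $\{q:q(K)\subset U\}$ of $|X|^I$ with $K\subset I$ compact and $U\subset|X|$ open: then $\vec S(p)(K)\subset U$ means $\mb F(K\times\{p(t):t\in K\})$... more precisely $\mb F(\{(t,p(t)):t\in K\})\subset U$, and since $K$ is compact and $\mb F$ continuous one produces, by a standard tube-lemma argument, a neighbourhood of $p$ in the compact–open topology mapped into the subbasic set. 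Everything else — the behaviour on endpoints, the fact that the image is strictly directed — is already supplied by Lemmas~\ref{lem:strict} and~\ref{lem:flow}, so the proof is essentially the observation that $\vec S$ is "apply a continuous homotopy pointwise", which is continuous on path spaces.
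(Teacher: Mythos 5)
Your argument is correct and is essentially the paper's own proof: the paper likewise observes that $\vec{S}$ is adjoint to the continuous map $I\times X^I\to X$, $(t,p)\mapsto\mb{F}(t,p(t))$, using continuity of evaluation (because $I$ is compact Hausdorff) and of $\mb{F}$. Your worry about $|X|$ failing to be Hausdorff or locally compact is unnecessary: continuity of the evaluation $X^I\times I\to X$ only requires the \emph{source} $I$ to be locally compact Hausdorff, and the direction of the exponential law you invoke (continuous adjoint $A\times I\to B$ implies continuous $A\to B^I$) holds for arbitrary target spaces.
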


\begin{proof}
The map $\vec{S}$ corresponds by adjunction to the continuous map $I\times X^I\to X$ defined by $(t,p)\mapsto (t,p(t))\mapsto \mb{F}(t,p(t))$. It is continuous since $I$ is compact and Hausdorff. 
\end{proof}

\subsection{Strictification is a homotopy equivalence}
\begin{proposition}\label{prop:strict}
Let $X$ denote a pre-cubical set with vertices $x^-$ and $x^+$. Then the inclusions $\iota:\vec{S}(X)_{x^-}^{x^+}\hookrightarrow\vec{P}(X)_{x^-}^{x^+}$ and its restriction $\iota_T:\vec{ST}(X)_{x^-}^{x^+}\hookrightarrow\vec{T}(X)_{x^-}^{x^+}$ are homotopy equivalences.
\end{proposition}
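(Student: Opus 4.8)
The plan is to show that the strictifying map $\vec{S}$ of the previous subsection is a homotopy inverse to the inclusion $\iota$, and likewise for $\iota_T$. I already know from Lemma~\ref{lem:flow} that $\vec{S}$ carries $\vec{P}(X)_{x^-}^{x^+}$ into $\vec{S}(X)_{x^-}^{x^+}$ and $\vec{T}(X)_{x^-}^{x^+}$ into $\vec{ST}(X)_{x^-}^{x^+}$, and that it is continuous. So the remaining task is purely to produce homotopies $\vec{S}\circ\iota\simeq \mathrm{id}$ and $\iota\circ\vec{S}\simeq\mathrm{id}$ in the appropriate path spaces, compatibly with the tame restriction.

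First I would observe that the map $\mb{F}\colon I\times X\to X$ of Lemma~\ref{lem:strict} is itself a directed homotopy starting at the identity: $\mb{F}(0,\cdot)=\mathrm{id}_X$ by Lemma~\ref{lem:strict}(1), and $\mb{F}(t,\cdot)$ is a directed self-map of $X$ preserving every cube and every vertex. The natural candidate homotopy is $H\colon I\times \vec{P}(X)_{x^-}^{x^+}\to X^I$, $H(s,p)(t):=\mb{F}(s\cdot u(t),\,p(t))$ for a suitable continuous ``schedule'' $u\colon I\to I$; the most economical choice is simply $H(s,p)(t)=\mb{F}(s,p(t))$, which at $s=0$ gives $p$ and at $s=1$ gives $\vec{S}(p)$. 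The three things to check are: (a) for each fixed $s$, $H(s,p)$ is again a d-path from $x^-$ to $x^+$ — this follows from the argument in Lemma~\ref{lem:flow}(2) applied to $\mb{F}(s\cdot,\cdot)$ rather than $\mb{F}$, using that $F(s t,\cdot)$ is still increasing and fixes $0,1$ and the cubes (Lemma~\ref{lem:strict}(2,4)); and one must note the endpoints stay at $x^-,x^+$ since these are vertices; (b) $H$ is jointly continuous, which again follows by adjunction from the continuity of $I\times I\times X^I\to X$, $(s,t,p)\mapsto \mb{F}(s,p(t))$, since $I\times I$ is compact Hausdorff; (c) if $p$ is tame, then $H(s,p)$ is tame for every $s$ — because $\mb{F}(s,\cdot)$ fixes vertices, so the same subdivision witnessing tameness of $p$ witnesses tameness of $H(s,p)$, exactly as in Lemma~\ref{lem:flow}(3). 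This single homotopy $H$ simultaneously realizes $\iota\circ\vec{S}\simeq\mathrm{id}$ on $\vec{P}$ restricted along $\iota$ (i.e.\ $\vec{S}$ followed by inclusion is homotopic to $\mathrm{id}_{\vec{P}}$), and its restriction handles the tame case; and because $H(s,\cdot)$ maps $\vec{S}(X)_{x^-}^{x^+}$ into itself (strictness is preserved by Lemma~\ref{lem:flow}(2) applied to $\mb{F}(s\cdot,\cdot)$), the same $H$ restricted to strict d-paths gives $\vec{S}\circ\iota\simeq\mathrm{id}$ on $\vec{S}(X)_{x^-}^{x^+}$, and its further restriction to strictly tame paths gives the corresponding statement for $\vec{ST}$.

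I expect the only mild obstacle to be verifying claim (a) cleanly: one needs that $H(s,p)$ is a \emph{d-path} — i.e.\ admits a presentation with increasing segments — not merely that its coordinate expressions are increasing on each cube. But this is immediate: take any presentation $[c_1;\beta_1]*\dots*[c_l;\beta_l]$ of $p$; since $\mb{F}(s,\cdot)$ preserves each cube, $H(s,p)$ has presentation $[c_1;\mb{F}(s,\beta_1)]*\dots*[c_l;\mb{F}(s,\beta_l)]$, and each $\mb{F}(s,\beta_i)$ is increasing because each coordinate $t\mapsto F(s,\beta_{i,j}(t))$ is a composite of the increasing $\beta_{i,j}$ with the increasing $F(s,\cdot)$. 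Thus the whole argument reduces to the already-established lemmas together with the adjunction/compactness continuity argument, and no further calculation with the explicit formula for $F$ is needed. Finally I would remark that the homotopy $H$ is not required to be through \emph{strict} d-paths on the whole of $\vec{P}$ — only the two endpoint conditions and continuity matter for the homotopy-equivalence conclusion — so the slightly delicate point that $H(s,p)$ need not be strict for intermediate $s$ when $p$ is a general d-path causes no difficulty.
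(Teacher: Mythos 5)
Your overall strategy is the paper's: use the directed homotopy $\mb{F}$ to connect the identity with $\iota\circ\vec{S}$, and restrict to the strict/tame subspaces. But the specific homotopy you settle on is wrong. You set $H(s,p)(t)=\mb{F}(s,p(t))$ and claim that at $s=1$ this gives $\vec{S}(p)$. It does not: by definition $\vec{S}(p)(t)=\mb{F}(t,p(t))$, whereas your $H(1,p)(t)=\mb{F}(1,p(t))$, i.e.\ the composite of $p$ with the single self-map $\mb{F}(1,\cdot)$ of $X$. These differ, and the difference is exactly where strictification happens: if $p$ pauses on an interval at a point in the interior of a cube, then $\mb{F}(1,\cdot)\circ p$ pauses there too --- Lemma \ref{lem:strict}(5) only helps when the first argument of $F$ varies with $t$ --- so $H(1,p)$ is in general not strict, and your homotopy never reaches a map that factors through $\vec{S}(X)_{x^-}^{x^+}$. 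As written, therefore, the argument does not exhibit $\iota\circ\vec{S}\simeq\mathrm{id}$, nor any homotopy inverse to $\iota$.

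The repair is already contained in your own ``schedule'' ansatz: take $u(t)=t$, i.e.\ $\mc{S}(s,p)(t)=\mb{F}(st,p(t))$, which is precisely the homotopy the paper uses. Then $\mc{S}(0,p)=p$ by Lemma \ref{lem:strict}(1) and $\mc{S}(1,p)=\vec{S}(p)$, and all of your remaining verifications go through verbatim for this choice: joint continuity by adjunction and compactness of $I\times I$; preservation of cubes, vertices, endpoints and tameness; the presentation argument showing each stage is a d-path; and the observation that for strict $p$ (indeed for any d-path $p$ and any $s>0$) the stage $\mc{S}(s,p)$ is strict, so the restriction to $\vec{S}$ and $\vec{ST}$ supplies the other half of the homotopy equivalence. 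With that one correction your proof coincides with the paper's.
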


\begin{proof} 
The homotopy $\mc{S}: I\times\vec{P}(X)_{x^-}^{x^+}\to\vec{P}(X)_{x^-}^{x^+}$ given by $\mc{S}(s,p)(t)=\mb{F}(st,p(t))$ connects the identity map (for $s=0$, apply Lemma \ref{lem:strict}(1)) with the map $\iota\circ S$ (for $s=1$). Its restriction to $\vec{S}(X)_{x^-}^{x^+}$ connects the identity map on that space with $S\circ\iota$. 

The restriction of $\mc{S}$ to a map from $\vec{T}(X)_{x^-}^{x^+}$ to $\vec{ST}(X)_{x^-}^{x^+}$ (well-defined because of Lemma \ref{lem:flow}(3)) is a homotopy inverse to the inclusion map $\vec{ST}(X)_{x^-}^{x^+}\hookrightarrow\vec{T}(X)_{x^-}^{x^+}$.
\end{proof}

\begin{remark}
\begin{enumerate}
\item Every d-path $p\in\vec{P}(X)_{x^-}^{x^+}$ can thus be arbitrarily well approximated by a strict d-path of the form $\mc{S}(s,p),\; s>0$. This shows that $\vec{S}(X)_{x^-}^{x^+}$ is  \emph{dense} in $\vec{P}(X)_{x^-}^{x^+}$. 
\item But $\vec{S}(X)_{x^-}^{x^+}$ is  \emph{not open} in $\vec{P}(X)_{x^-}^{x^+}$. Arbitrarily close to any strict d-path there is a d-path that ``pauses'' on a tiny interval.
 
\end{enumerate}
\end{remark}
\section{Cube chains and collars}
\subsection{The collar of a face in a cube}\label{ss:collar}
We propose a user-friendly notation for repeated face maps: first in a cube $I^n$ and then in a $\Box$-set $X$:
Every partition $[1:n]=J_0\sqcup J_*\sqcup J_1$ defines a face $d_{[J_0|J_*|J_1]}I^n=\{ 0\}^{J_0}\times I^{J_*}\times \{ 1\}^{J_1}$ of the cube $I^n$. Its (open) \emph{collar} $C_{[J_0|J_*|J_1]}I^n$ is defined as $[0,0.5[^{J_0}\times I^{J_*}\times ]0.5,1]^{J_1}\subset I^n$. In particular, the bottom vertex $\mb{0}$ is identified with $d_{[[1:n]|\emptyset |\emptyset ]}I^n$ and the top vertex with $\mb{1}=d_{[\emptyset |\emptyset |[1:n]]}I^n$. Remark that the only vertices in a collar $C_{[J_0|J_*|J_1]}I^n$ are those that are already present in the face $d_{[J_0|J_*|J_1]}I^n$.

For a $\Box$-set $X$, an $n$-cell $c$ in $X$ and a partition $J_0\sqcup J_*\sqcup J_1$, the combinatorics of the quotient map $q: I^n\to c$ gives rise to a face $d_{[J_0|J_*|J_1]}c=q(d_{[J_0|J_*|J_1]}I^n)$ with collar $C_{[J_0|J_*|J_1]}c=q(C_{[J_0|J_*|J_1]}I^n)$. 
Remark that, for a self-linked $\Box$-set, different partitions (of the same cardinality) can give rise to the same face.  

If $d$ and $c$ are cubes in $X$, the collar of $d$ in $c$ is defined as $C(d,c)=\bigcup_{[J_0|J_*|J_1]}C_{[J_0|J_*|J_1]}c$; the union is taken over all $[J_0|J_*|J_1]$ such that $d_{[J_0|J_*|J_1]}c$ is a face of $d$, including $d$ itself. The collar $C(d,X)=\bigcup_{c\in X_n, n\ge 0}C(d,c)$ of $d$ in $X$ agrees with a regular neighbourhood of $d$ with respect to a barycentric subdivision of the $\Box$-set $X$. The collar $C(x,X)$ of a vertex $x$ is called the \emph{star} $st(x)$ of $x$ in X. For simple illustrations, cf Figure \ref{fig:collar}.

\begin{center}
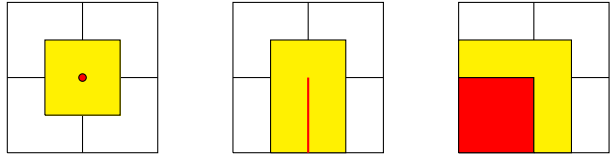
\begin{figure}[h]
\begin{tikzpicture}
\draw (0,0) -- (2,0) -- (2,2) -- (0,2) -- (0,0);
\draw (0,1) -- (2,1);
\draw (1,0) -- (1,2);
\draw[fill=yellow] (0.5,0.5) -- (1.5,0.5) -- (1.5,1.5) -- (0.5,1.5) -- (0.5,0.5);
\draw[fill=red] (1,1) circle [radius=0.05]; 
\draw (3,0) -- (5,0) -- (5,2) -- (3,2) -- (3,0);
\draw (3,1) -- (5,1);
\draw (4,0) -- (4,2);
\draw[fill=yellow] (3.5,0) -- (4.5,0) -- (4.5,1.5) -- (3.5,1.5) -- (3.5,0); 
\draw [thick, color=red] (4,0) -- (4,1);
\draw (6,0) -- (8,0) -- (8,2) -- (6,2) -- (6,0);
\draw (7,0) -- (7,2);
\draw (6,1) -- (8,1);
\draw [fill=red] (6,0) -- (7,0) -- (7,1) -- (6,1) -- (6,0);
\draw [fill= yellow] (7,0) -- (7.5,0) -- (7.5,1.5) -- (6,1.5) -- (6,1) -- (7,1) -- (7,0);
\end{tikzpicture}
\caption{Star of a vertex, collar of an edge and of a 2-cube in a Euclidean cubical complex (cf Example \ref{ex:cubset}(1)) consisting of four 2-cubes}\label{fig:collar}
\end{figure}
\end{center}

\begin{remark}\label{rem:open}
\begin{enumerate}
\item The collar $C(d,X)$ of a face $d$ in a $\Box$-set $X$ is open since it intersects every cube in $X$ in an open set; possibly empty.
\item If $c$ is a face of $c'$, then $C(c,X)\subseteq C(c',X)$.
\end{enumerate}
\end{remark}

\subsection{D-paths subordinate to a cube chain}
\begin{definition}\label{def:cc}  Let $X$ denote a $\Box$-set with two vertices $x^-, x^+\in X_0$ selected.
\begin{enumerate}
\item A \emph{cube chain}  $\mb{c}=(c_1,\dots , c_n)$ in $X$ from $x^-$ to $x^+$ \cite{Ziemianski:17} is a sequence of cubes\\ $c_i,\; 1\le i\le n,\; \dim{c_i}>0,$ with source and target vertices (cf Definition \ref{df:pcs}(1)) satisfying $c_{1,\mb{0}}=x^-, c_{n,\mb{1}}=x^+, c_{i-1,\mb{1}}=c_{i,\mb{0}}=x_i,\; 1< i\le n$.
\item A cube chain $\mb{c}=(c_1,\dots , c_n)$ in $X$ from $x^-$ to $x^+$ defines an associated \emph{vertex sequence} $(x^-=x_0,x_1,\dots ,x_{n-1},x_n=x^+)$ with $x_i=c_{\mb{i},1}=c_{\mb{i+1},0},\; 1\le i<n$.
\item The \emph{length} of a cube chain $\mb{c}$ is defined as $|\mb{c}|=\sum_{i=1}^n\dim c_i$.
\end{enumerate}
\end{definition}

\begin{remark}\label{rem:erase}
Only for a \emph{proper} $\Box$-set $X$ the correspondence from cube chains to vertex sequences is injective. 
\end{remark}

\begin{definition}
\begin{enumerate}
\item A \emph{track} $\mb{d}=(d_1,\dots , d_l)$ in $X$ from $x^-$ to $x^+$ \cite{Ziemianski:17} is a sequence of cubes $d_i,\; 0\le i\le l,$ with $d_{1\mb{0}}=x^-, d_{l\mb{1}}=x^+$ and such that some upper iterated face of $d^i$ agrees with some lower iterated face of $d^{i+1},\; 1\le i<l$; at most one of these two faces is allowed to be the original cube.
\item A track $\mb{t}_1^l$ in $X$ is called subordinate to the collar of the cube chain $\mb{c}_1^n$ in $X$ if there there exists a non-decreasing surjective map $j: [1:l]\to [1:n]$ such that, for $1\le i\le l$,  $t_i$ is a coface of $c_{j(i)}$ or of one of its iterated faces.
\item A path $p\in C(\mb{c},X)$ is called subordinate to the collar of the cube chain $\mb{c}$ in $X$ if it allows a presentation $p=[t_1;\beta_1]*_{s_1}[t_2;\beta_2]*_{s_2}\dots *_{s_{l-1}}[t_l;\beta_l]$ with $\mb{t}=(t_i)_1^l$ a track subordinate to $\mb{c}$, $d_{[J_0,J_*,J_1]}t_i=c_{j_(i)}$ and $\beta_i\in\vec{P}_{[s_{i-1},s_i]}( C(d_{[J_0,J_*,J_1]}I^{\dim t_{j_(i)}}, I^{\dim t_{j_(i)}})),$\\ $\beta_i(s_{i-1})\in st(d_{[J_0\cup J_*,\emptyset ,J_1]}I^{\dim t_{j_(i)}}), \beta_i(s_i)\in st(d_{[J_0,\emptyset ,J_*\cup J_1]}I^{\dim t_{j_(i)}})$. 
\end{enumerate}
\end{definition}



\begin{center}
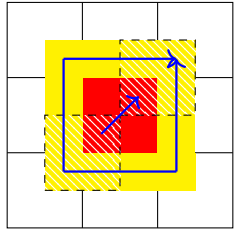
\begin{figure}[h]\label{fig:subocc}
\begin{tikzpicture}
\draw (0,0) rectangle (3,3);
\draw (1,0) -- (1,3);
\draw (2,0) -- (2,3);
\draw (0,1) -- (3,1);
\draw (0,2) -- (3,2);
\fill[red] (1,1) rectangle (2,2);
\fill[yellow] (0.5,0.5) rectangle (2.5,1);
\fill[yellow] (0.5,2) rectangle (2.5,2.5);
\fill[yellow] (0.5,0.5) rectangle (1,2.5);
\fill[yellow] (2,0.5) rectangle (2.5,2.5);
\draw [pattern=north west lines, pattern color = white, dashed] (0.5,0.5) rectangle (1.5,1.5);
\draw [pattern=north west lines, pattern color = white, dashed] (1.5,1.5) rectangle (2.5,2.5);
\draw[->, line width=0.3mm, color=blue] (1.25,1.25) -- (1.75,1.75);
\draw[line width=0.3mm, color=blue] (0.75,0.75) -- (0.75,2.25);
\draw[->, line width=0.3mm, color=blue] (0.75,2.25) -- (2.25,2.25);
\draw[line width=0.3mm, color=blue] (0.75,0.75) -- (2.25,0.75);
\draw[->, line width=0.3mm, color=blue] (2.25,0.75) -- (2.25,2.25);
\end{tikzpicture}
\caption{Cube $c$ in red, collar $C(c)$ in yellow, stars $st(c_{\mb{0}})$ and $st(c_{\mb{1}})$ dashed. The two outer d-paths (in blue) are subordinate to the collars of the cube chains $(d_{[2,1,\emptyset ]}c,d_{[\emptyset , 2,1]}c)$, resp.\ $(d_{[1,2,\emptyset ]}c, _{[\emptyset , 1,2]}c)$. All three d-paths are subordinate to the cube chain $(c)$ consisting of a single cube.}
\end{figure}
\end{center}

\begin{remark}
Ziemia\'{n}ski shows in \cite[Proposition 3.5]{Ziemianski:19} that every non-constant d-path is contained in a track.
\end{remark} 

\begin{definition}\label{def:subo}
Let $\mb{c}=(c_1,\dots ,c_n)$ denote a cube chain in a $\Box$-set $X$ with associated vertex sequence $(x^-=x_0,x_1,\dots ,x_n=x^+)$. 
\begin{enumerate}
\item The subspace $\vec{T}_{\mb{c}}(X)\subset\vec{T}(X)$ of (tame) d-paths subordinate to $\mb{c}$ consists of d-paths with presentation $p=[c_1;p_1]*_{t_1}\dots *_{t_{n-1}}[c_n;p_n]$ with $p_i\in\vec{P}_{[t_{i-1},t_i]}(I^{\dim c_i})_{\mb{0}}^{\mb{1}}$.
\item The subspace $\vec{P}_{\mb{c}}^C(X)\subset\vec{P}(X)$ of d-paths subordinate to the \emph{collar} $C(\mb{c})$ of $\mb{c}$ consists of d-paths $p=p_1*_{t_1}\dots *_{t_{n-1}}p_n$ with $p_i\in\vec{P}_{[t_{i-1},t_i]}(C(c_i,X))_{y_{i-1}}^{y_i}$ with  $y_i\in st(x_i)$. 
\item $\vec{ST}_{\mb{c}}(X)_{x^-}^{x^+}:=\vec{T}_{\mb{c}}(X)\cap\vec{S}(X)$, $\vec{S}_{\mb{c}}^C(X)_{x^-}^{x^+}:=\vec{P}_{\mb{c}}^C(X)\cap\vec{S}(X)_{x^-}^{x^+}$,\\ $\vec{T}_{\mb{c}}^C(X)_{x^-}^{x^+}:=\vec{P}_{\mb{c}}^C(X)\cap\vec{T}(X)_{x^-}^{x^+},$ $\vec{ST}_{\mb{c}}^C(X)_{x^-}^{x^+}:=\vec{P}_{\mb{c}}^C(X)\cap\vec{ST}(X)_{x^-}^{x^+}$.
\end{enumerate}
\end{definition} 

\begin{figure}[h]
\begin{tikzpicture}
\draw (0,0) -- (2,0) -- (2,4) -- (0,4) -- (0,0);
\draw (0,2) -- (2,2);
\draw (3,0) -- (5,0) -- (5,4) -- (3,4) -- (3,0);
\draw (3,2) -- (5,2);
\draw (6,0) -- (8,0) -- (8,4) -- (6,4) -- (6,0);
\draw (6,2) -- (8,2);
\draw (9,0) -- (11,0) -- (11,4) -- (9,4) -- (9,0);
\draw (9,2) -- (11,2);
\draw[->, line width=0.6mm, color=blue] (2,2) -- (2,4);
\draw[->, line width=0.6mm, color=blue] (0,0) -- (2,2);
\draw[->, line width=0.6mm, color=blue] (3,0) -- (3,2);
\draw[->, line width=0.6mm, color=blue] (3,2) -- (5,2);
\draw[->, line width=0.6mm, color=blue] (5,2) -- (5,4);
\draw[->, line width=0.6mm, color=blue] (6,0) -- (8,4);
\draw[->, line width=0.6mm, color=blue] (9,0) -- (11,4);
\draw[->, line width=0.6mm, color=magenta] (9,0) -- (9.5,1.5);
\draw[->, line width=0.6mm, color=magenta] (9.5,1.5) -- (11,2);
\draw[->, line width=0.6mm, color=magenta]  (11,2) -- (11,4);
\draw[fill=gray] (6,3) -- (7,3) -- (7,4) -- (6,4) -- (6,3);
\draw[fill=gray] (9,3) -- (10,3) -- (10,4) -- (9,4) -- (9,3);
\draw[fill=gray] (10,0) -- (11,0) -- (11,1) -- (10,1) -- (10,0);
\end{tikzpicture}
\caption{d-paths subordinate to the cube chain $\mb{c}$ consisting of a 2-cube and a 1-cube, resp.\ to the cube chain $\mb{d}$ consisting of three 1-cubes; moreover subordinate to their respective collars. The path in magenta is contained in $\vec{T}_{\mb{d}}^C(X)$, but not in $\vec{T}_{\mb{d}}(X)$.}
\end{figure}
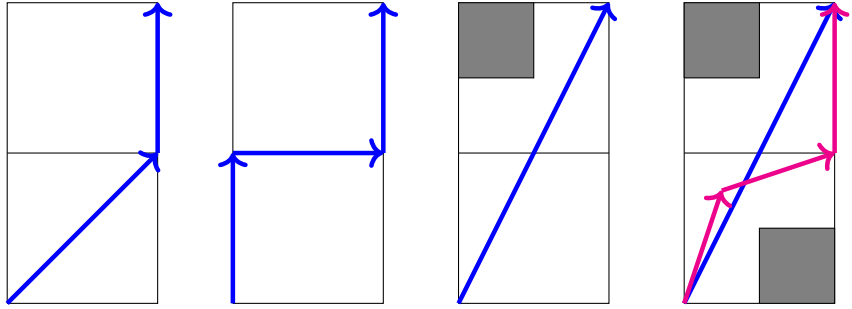

\begin{remark}
\item For every d-path subordinate to the collar of $\mb{c}$, there exists a covering of the interval $I$ by open intervals $K_i$ such that $p_i(K_i)\subset C(c_i,X)$ and $p_i(K_i\cap K_{i+1})\subset st(x_{i+1})$.
\end{remark}

\begin{example}
A cube chain may include a loop. This is the major reason why we investigate paths \emph{subordinate to} the collar of a cube chain and not just paths \emph{contained} in the collar of a cube chain. Let us have a closer look at a particular case:

Let $Z_2$ denote the 2-dimensional $\Box$-set with one $0$-cell $v$, one $1$-cell $e$ and one 2-cell $c$ with quotient map $q: I^2\to Z_2$ (cf Example \ref{ex:cubset}(3)). The star of $v$ has the form\\ $st(v)=q(I^2\setminus \{(x,y)|\; x\neq 0.5, y\neq 0.5\} )$. The edge $e$ has collar $C(e,X)=q(I^2\setminus \{ (0.5,0.5)\})$. A d-path $p\in \vec{P}(Z_2)_v^v$ in $X$ subordinate to the collar of the cube chain $(e)$ (consisting of a single 1-cube) can only reach $v$ from $v$ running once through $e$. It is subordinate to the collar of the cube chain $(e,e)$ if $p=q\circ\alpha$ with $\alpha\in\vec{P}(I^2\setminus \{ (0.5,0.5)\} )_{\mb{0}}^{\mb{1}}$. 

A d-path subordinate to the collar of $(e)$ and another one subordinate to the collar of $(e,e)$ cannot be d-homotopic since they have different lengths \cite[Proposition 2.12]{Raussen:09}.
\end{example}

\begin{proposition}\label{prop:open} Let $\mb{c}$ denote a cube chain in $X$ from $x^-$ to $x^+$. 
\begin{enumerate}
\item The path space $\vec{P}_{\mb{c}}^C(X)_{x^-}^{x^+}\subset\vec{P}(X)_{x^-}^{x^+}$ subordinate to the collar of $\mb{c}$ is open.
\item Likewise are $\vec{S}_{\mb{c}}^C(X)_{x^-}^{x^+}\subset\vec{S}(X)_{x^-}^{x^+}$, $\vec{T}_{\mb{c}}^C(X)_{x^-}^{x^+}\subset\vec{T}(X)_{x^-}^{x^+}$, and $\vec{ST}_{\mb{c}}^C(X)_{x^-}^{x^+}\subset\vec{ST}(X)_{x^-}^{x^+}$.
\end{enumerate}
\end{proposition}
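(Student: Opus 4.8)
The plan is to prove statement (1) directly from the definition of the compact-open topology: around an arbitrary $p\in\vec P_{\mb c}^C(X)_{x^-}^{x^+}$ I will produce a finite intersection of sub-basic open sets of $X^I$ whose intersection with $\vec P(X)_{x^-}^{x^+}$ still consists entirely of d-paths subordinate to the collar of $\mb c$. Statement (2) is then immediate, since each of $\vec S(X)_{x^-}^{x^+},\vec T(X)_{x^-}^{x^+},\vec{ST}(X)_{x^-}^{x^+}$ is a subspace of $\vec P(X)_{x^-}^{x^+}$ and the corresponding subordinate path space is, by Definition \ref{def:subo}(3), its intersection with $\vec P_{\mb c}^C(X)_{x^-}^{x^+}$; an open set meets a subspace in a set open in that subspace.

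First I would fix, for the given $p$, a presentation $p=p_1*_{t_1}\cdots*_{t_{n-1}}p_n$ witnessing $p\in\vec P_{\mb c}^C(X)$ as in Definition \ref{def:subo}(2): so $0=t_0<t_1<\cdots<t_n=1$, $p([t_{i-1},t_i])\subseteq C(c_i,X)$, and $p(t_i)=y_i\in st(x_i)$ for $0\le i\le n$, with $y_0=x^-$ and $y_n=x^+$. Since each collar $C(c_i,X)$ and each star $st(x_i)=C(x_i,X)$ is open in $X$ (Remark \ref{rem:open}(1)), continuity of $p$ lets me choose $\varepsilon_i>0$, for $1\le i\le n-1$, so small that the closed intervals $[t_i-\varepsilon_i,t_i+\varepsilon_i]$ lie in $(t_{i-1},t_{i+1})$, are pairwise disjoint, and satisfy $p([t_i-\varepsilon_i,t_i+\varepsilon_i])\subseteq st(x_i)$. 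Put
\[
N:=\bigcap_{i=1}^{n}\bigl\{q\in X^I:\ q([t_{i-1},t_i])\subseteq C(c_i,X)\bigr\}\;\cap\;\bigcap_{i=1}^{n-1}\bigl\{q\in X^I:\ q([t_i-\varepsilon_i,t_i+\varepsilon_i])\subseteq st(x_i)\bigr\},
\]
a finite intersection of sub-basic open sets of the compact-open topology on $X^I$, with $p\in N$.

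The key step is to check that $N\cap\vec P(X)_{x^-}^{x^+}\subseteq\vec P_{\mb c}^C(X)_{x^-}^{x^+}$. Given such a $q$, I reuse the \emph{same} subdivision $0=t_0<\cdots<t_n=1$ as for $p$. The restriction $q_i:=q|_{[t_{i-1},t_i]}$ is again a d-path: precomposing the d-path $q$ with the non-decreasing self-map of $I$ obtained from an affine homeomorphism $I\to[t_{i-1},t_i]$ stays in $\vec P(X)$ by Definition \ref{def:d}(1), so $q_i\in\vec P_{[t_{i-1},t_i]}(X)$ by Definition \ref{def:d}(4). The first family of conditions in $N$ forces $q_i$ to have image in $C(c_i,X)$, hence $q_i\in\vec P_{[t_{i-1},t_i]}(C(c_i,X))$; the second forces $q(t_i)\in st(x_i)$ for $1\le i\le n-1$, while $q(t_0)=x^-=x_0\in st(x_0)$ and $q(t_n)=x^+=x_n\in st(x_n)$ hold automatically. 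Thus $q=q_1*_{t_1}\cdots*_{t_{n-1}}q_n$ is a presentation of exactly the form demanded in Definition \ref{def:subo}(2), so $q$ is subordinate to the collar of $\mb c$. Since $p$ was arbitrary, (1) follows, and then (2) as above.

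The main obstacle is bookkeeping rather than conceptual: the subordination notion of Definition \ref{def:subo}(2) is a statement about \emph{presentations} respecting the order $c_1,\dots,c_n$ of the cubes, so the point that makes the argument work is that a path $q$ close to $p$ may be cut along the \emph{same} break points $t_i$ that $p$ uses. The only honestly technical details to nail down are that restricting a d-path of $X$ to a subinterval, and corestricting it to the open collar $C(c_i,X)$, again yields a d-path there in the sense of Definition \ref{def:d}(4), and that collars and stars are open (Remark \ref{rem:open}(1)); the rest is the standard observation that a finite intersection of ``tube'' conditions is open in the compact-open topology.
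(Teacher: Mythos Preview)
Your proof is correct and follows the same line as the paper's own argument. The paper gives only a two-line sketch (paths in $\vec P_{\mb c}^C(X)$ intersect cubes in open collars and are concatenated along open stars; part (2) follows by the subspace topology), and you have unpacked precisely this into an explicit compact-open neighbourhood $N$ built from the open collars $C(c_i,X)$ and stars $st(x_i)$, then verified that any $q\in N\cap\vec P(X)_{x^-}^{x^+}$ admits a subordinate presentation along the \emph{same} break points $t_i$.
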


\begin{proof}
\begin{enumerate}
\item d-paths in $\vec{P}_{\mb{c}}^C(X)_{x^-}^{x^+}$ are characterized by intersecting cubes in tracks along $\mb{c}$ in open subsets; they are concatenated along open stars of vertices. 
\item by definition of the topology induced on subspaces. 
\end{enumerate} 
\end{proof}

\begin{proposition}\label{prop:vs} 
\begin{enumerate}
\item Every cube chain $\mb{c}$ -- and hence also its collar \(C(\mb{c})\) -- contains a tame strict d-path.
\item For every strict d-path $p\in\vec{S}(X)_{x^-}^{x^+}$, there exists a cube chain $\mb{c}(p)$ such that $p\in\vec{S}_{\mb{c}(p)}^C(X)_{x^-}^{x^+}$.
\end{enumerate}
\end{proposition}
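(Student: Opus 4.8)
The plan is to prove (1) by an explicit diagonal construction and (2) by extracting a cube chain from a strict presentation and then converting it into one with vertex junctions. \emph{For (1)}, write $(x^-=x_0,x_1,\dots,x_n=x^+)$ for the vertex sequence of $\mb{c}=(c_1,\dots,c_n)$ and identify each $c_i$ with $I^{\dim c_i}$ via its quotient map. On $[\tfrac{i-1}{n},\tfrac{i}{n}]$ I take the affine diagonal $\delta_i(t)=(nt-i+1,\dots,nt-i+1)$, running from $\mb{0}$ to $\mb{1}$; it is strictly increasing because it is affine with positive slope. Since the target vertex of $c_i$ equals the source vertex of $c_{i+1}$, the concatenation $\delta:=[c_1;\delta_1]*_{1/n}\cdots*_{(n-1)/n}[c_n;\delta_n]$ is a well-defined presentation (Definition \ref{def:present}(3)) of a d-path which is strictly directed (all $\delta_i$ strictly increasing) and tame (each $\delta(i/n)=x_i$ is a vertex), so $\delta\in\vec{ST}_{\mb{c}}(X)_{x^-}^{x^+}$. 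As $c_i\subseteq C(c_i,X)$ and $x_i\in st(x_i)$, the same presentation exhibits $\delta$ as subordinate to the collar of $\mb{c}$, hence $\delta\in\vec{ST}_{\mb{c}}^C(X)_{x^-}^{x^+}\subseteq C(\mb{c})$, which proves (1).

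\emph{For (2)}, I start from a strict presentation $p=[c_1;\beta_1]*_{t_1}\cdots*_{t_{l-1}}[c_l;\beta_l]$ and first normalise it: for each $i$ let $M_i$ collect the coordinates of $\beta_i$ that are neither identically $0$ nor identically $1$ on $[t_{i-1},t_i]$, and replace $c_i$ by the face $\hat{c}_i=d_{[Z_i|M_i|O_i]}c_i$, with $Z_i$ (resp.\ $O_i$) the coordinates frozen at $0$ (resp.\ $1$). Then $p|_{[t_{i-1},t_i]}$ lands in $\hat{c}_i$ and every coordinate of $\beta_i$ genuinely increases there (possibly after a plateau at $0$ and before a plateau at $1$). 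Since an increasing coordinate that is not identically $0$ is positive at the right endpoint, and dually at the left, $p(t_i)$ lies in a common face $g_i$ of $\hat{c}_i$ and $\hat{c}_{i+1}$ that is an iterated upper face of $\hat{c}_i$ and an iterated lower face of $\hat{c}_{i+1}$; in particular $\beta_1(0)=\mb{0}_{\hat{c}_1}$ represents $x^-$ and $\beta_l(1)=\mb{1}_{\hat{c}_l}$ represents $x^+$. Thus $(\hat{c}_1,\dots,\hat{c}_l)$ is a track from $x^-$ to $x^+$ containing $p$, reflecting Ziemia\'{n}ski's observation that every non-constant d-path lies in a track.

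It then remains to replace this track by a genuine cube chain $\mb{c}(p)$ whose collar still contains $p$, which I would do by induction on $l$, proving the slightly stronger statement in which $p(0)$ and $p(1)$ are only required to lie in stars $st(v^-),st(v^+)$ of vertices; this relaxation is forced by the peeling step below, because a strict path cannot be retracted onto a vertex in a directed way. For $l=0$ the path is constant and the empty chain works. For $l\ge 1$: a non-constant directed path cannot stay inside one vertex star (a directed loop in $st(v)$ is constant, since the ``small'' collar coordinates must return to $0$ and the ``large'' ones remain at $1$), so $p$ enters the star of a new vertex; from $\hat{c}_1$ one reads off a positive-dimensional sub-cube $c_1^{ch}$ with bottom vertex $v^-$ whose top vertex $x_1$ is such a vertex, together with a splice time $s_1$ with $p([0,s_1])\subseteq C(c_1^{ch},X)$ and $p(s_1)\in st(x_1)$; the reparametrised tail has a normalised presentation with strictly fewer genuinely-traversed segments, so the induction hypothesis (with $v^-:=x_1$) supplies a cube chain for it, and prepending $(c_1^{ch})$ and checking the collar conditions at the splice yields $\mb{c}(p)$. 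Since $p$ is already strict, $p\in\vec{P}_{\mb{c}(p)}^C(X)\cap\vec{S}(X)=\vec{S}_{\mb{c}(p)}^C(X)_{x^-}^{x^+}$.

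The hard part is exactly this last conversion. A strict d-path typically passes through no vertex other than $x^-$ and $x^+$, so the intermediate vertices of $\mb{c}(p)$ cannot be chosen on $p$; they must be vertices whose open stars $p$ merely traverses, and one has to check both that successive choices link up into an actual cube chain and that each portion of $p$ remains in the appropriate collar. Strictness is what makes this work: an active coordinate of a strict segment never dwells on the level set ``$=\tfrac12$'' bounding a vertex star, so the splice times $s_i$ can always be chosen with $p(s_i)$ in the interior of the relevant star and the two adjacent portions inside the relevant open collars. For a merely directed path this would fail.
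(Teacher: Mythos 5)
Part (1) of your proposal is correct and is the same diagonal construction the paper uses, so nothing to add there.

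Part (2) has a genuine gap: the step you yourself flag as ``the hard part'' is exactly the content of the proposition, and you do not carry it out. The paper's proof works inside a single presentation segment $[c_i;\beta_i]$ as follows: because the path is strict, it meets each middle hyperplane $x_j=0.5$ of $I^{\dim c_i}$ at most once, so there is a \emph{finite ordered} set of crossing times $s_1<\dots<s_m$; at each $s_j$ the coordinates split into a partition $[J_0^j|J_*^j|J_1^j]$ (below, on, above $0.5$), these partitions are nested in the required way ($J_1^{j+1}=J_1^j\cup J_*^j$), the faces $d_{[J_0^j|J_*^j|J_1^j]}I^{\dim c_i}$ form the cube chain, and between consecutive crossings the path sits in the star of the intermediate vertex. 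Your proposal replaces this explicit construction by ``from $\hat{c}_1$ one reads off a positive-dimensional sub-cube $c_1^{ch}$\dots together with a splice time $s_1$'' without saying which sub-cube or why the collar conditions hold; the one-line remark that a strict coordinate never dwells on the level $\tfrac12$ is the right germ of the idea but is not the construction. Moreover your induction measure is wrong: you induct on the number $l$ of ``genuinely-traversed segments'' of the normalised presentation, but a single segment $[\hat{c}_1;\beta_1]$ typically contributes \emph{several} cubes to the chain (one per crossing time), and after peeling off the first of them the tail still has $\hat{c}_1$ as its first segment with the same active coordinates, so $l$ does not decrease and the induction does not terminate as stated. (A correct measure would be, e.g., the total number of remaining middle-hyperplane crossings.) Finally, once you relax the endpoints to lie in stars rather than at vertices -- as your induction forces you to do -- the claim that ``a non-constant directed path cannot stay inside one vertex star'' is false (e.g.\ $t\mapsto(0.1+0.2t,\,0.1+0.2t)$ in $st(\mb{0})\subset I^2$); the paper instead allows degenerate single-vertex links and erases them at the end. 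Your preliminary normalisation to a track is harmless but does not substitute for the hyperplane-crossing analysis, and the matching of the chains produced by consecutive presentation segments (junction on a hyperplane versus junction in a star, which the paper treats as case (b)) is also left unverified.
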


In the following proof, we will make use of \emph{coordinate hyperplanes} in a cube chain. In a cube $I^n$, consider middle hyperplanes given by the equations $x_i=0.5,\; 1\le i\le n$. The elementary but crucial observation is that a \emph{strict d-path} intersects any coordinate hyperplane in \emph{at most} one point. Likewise, one defines coordinate middle hyperplanes (potentially with identifications on the boundary, one boundary middle hyperplane being identified with another such) in each cube in a $\Box$-set $X$. 

\begin{proof}
\begin{enumerate}
\item The diagonal path $\delta_n$ in $\vec{ST}(I^n)_{\mb{0}}^{\mb{1}}$ connects bottom and top vertex of the $n$-cube diagonally with constant speed. For an $n$-cell $c$ in $X$, composition with the quotient map $I^n\downarrow c$ defines a strict tame path $\delta (c)$ in $c$ from its bottom vertex to its top vertex. The concatenation $\delta (\mb{c}):=\delta (c_0)*\dots *\delta (c_n)$ defines  a strict tame path in the cube chain $\mb{c}$ connecting bottom and top vertex. 
\item For every \emph{strict} d-path $p=(p_1,\dots ,p_n): J=[j^-,j^+]\to I^n$ in a single cube $I^n$ defined on some interval $J\subseteq I$, there is a finite (ordered) set $S\subset J$ (possibly empty) consisting of $s_j\in J$ at which $p$ intersects one or several of the middle hyperplanes $x_i=0.5,\; 1\le i\le n.$
Define $J^j_0, J^j_*, J^j_1$ as the set of indices $i$ for which $p_i(s_j)$ is less than, equal, resp.\ greater than $0.5$. Since $p$ is directed, we have that $J_1^{j+1}=J_1^j\cup J_*^j$ and $J_0^{j+1}=J_0^j\setminus J_*^{j+1}$. For $\max (j^-,s_{j-1})<t<\min (j^+,s_{j+1})$,  $p(t)$ is contained in the collar of the face $d_{[J^j_0|J^j_*|J^j_1]}I^n$  -- the minimal face with this property. 

For $\max (j^-,s_{j-1})<t<\min (j^+,s_j)$, $p(t)$ is contained in the star of the \emph{vertex} $d_{[J^j_0\cup J^j_*|\emptyset |J^j_1]}I^n$, and for $\max (j^-,s_j)<t<\min (j^+,s_{j+1})$ in the star of the \emph{vertex} $d_{[J^j_0|\emptyset |J^j_*\cup J^j_1]}I^n$. The entire path is therefore contained in the collar $\mb{c}(p)$ of the cube chain defined by the subsequent cubes $d_{[J^j_0|J^j_*|J^j_1]}I^n$. 

Two special cases deserve particular attention:
\begin{enumerate}
\item This cube chain degenerates to a single vertex $d_{[J_0|\emptyset |J_1]}I^n$ if $p$ does not intersect any of the hyperplanes $x_i=0.5$.
\item If $s_{min}=j^-$ and $p(j^-)$ is contained in a lower boundary face,  then the first cube $d_{[J^{min}_0|J^{min}_*|J^{min}_1]}I^n$ of the cube chain $\mb{c}(p)$ is the minimal face containing $p(j^-)$. If $s_{max}=j^+$ and $p(j^+)$ is contained in an upper boundary face, then the last cube $d_{[J^{max}_0|J^{max}_*|J^{max}_1]}I^n$ of $\mb{c}(p)$ is minimal containing $p(j^+)$.
\end{enumerate}

Now let $p:I\to X$ denote a strict d-path in a $\Box$-set $X$ from $x^-$ to $x^+$ with presentation $p=[c_1;p^1]*_{t_1}[c_2,p^2]*_{t_2}\dots *_{t_{l-1}}[c_l,p^l]$.  Then the construction above can be performed for each individual cube $c_i$ leading to a sequence $\mb{c}(p)=\mb{c}(p^1)*\dots *\mb{c}(p^l)$ of cubes the collar of which contains $p(I)$. One has to check that two subsequent cubes ``match'':

If $p^i(t_i)=p^{i+1}(t_i)$ is not contained in any middle hyperplane, then it is contained in the star of a vertex which is the top vertex in the cube chain corresponding to $p^i$ and the bottom vertex in that corresponding to $p^{i+1}$. If $p^i(t_i)=p^{i+1}(t_i)$ is contained in a middle hyperplane, then the last cube in the cube chain corresponding to $[c_i,p^i]$ agrees with the first one in the cube chain corresponding to $[c_{i+1},p_{i+1}]$, ie the minimal cube in the boundary of $c_i$ and of $c_{i+1}$ containing $p_i(t_i)$ -- according to (b) above.

In a final step, one erases cubes consisting of a single vertex; cf (a) above.
\end{enumerate}
\end{proof}

\begin{center}
\begin{figure}[h]
\begin{tikzpicture}
\fill[yellow] (-5,0) rectangle (-1,1);
\draw (-5,0) rectangle (-1,2);
\draw (-3,0) -- (-3,2);
\draw[line width = 0.6mm, color=orange] (-5,0) -- (-1,0);
\draw[->, line width =0.6mm, color =blue] (-5,0.2) -- (-1,0.8);
\fill[yellow] (0,0) -- (4,0) -- (4,2) -- (1,2) -- (1,1) -- (0,1) -- (0,0);
\fill[yellow] (5,0) -- (8,0) -- (8,1) -- (9,1) -- (9,2) -- (6,2) -- (6,1) -- (5,1) -- (5,0);
\draw (0,0) rectangle (4,2);
\draw[line width = 0.6mm, color=orange] (2,0) rectangle (4,2);
\draw[line width = 0.6mm, color=orange] (0,0) -- (2,0);
\draw[->, line width =0.6mm, color=blue] (2,0.8) -- (4,1.8);
\draw[->, line width =0.6mm, color=blue] (0,0.2) -- (2,0.8);
\draw (5,0) rectangle (9,2);
\draw[->, line width =0.6mm, color =blue] (5,0.5) -- (9,1.5);
\draw[line width = 0.6mm, color=orange] (5,0) -- (7,0) -- (7,2) -- (9,2);
\draw (6.97,0) -- (6.97,2);
\draw (7.03,0) -- (7.03,2);
\draw[fill=red] (-3,0) circle [radius=0.05];
\draw[fill=red] (2,0) circle [radius=0.05];
\draw[fill=red] (7,0) circle [radius=0.05];
\draw[fill=red] (7,2) circle [radius=0.05];
\end{tikzpicture}
\caption{D-paths in two subsequent cubes in blue; the corresponding cube chains in orange, their collars in yellow. In the first two cases the two cube chains associated to each individual square consist of a single cube with common top, resp.\ bottom vertex; in the last case, the two cube chains share a common edge cube.}
\end{figure}
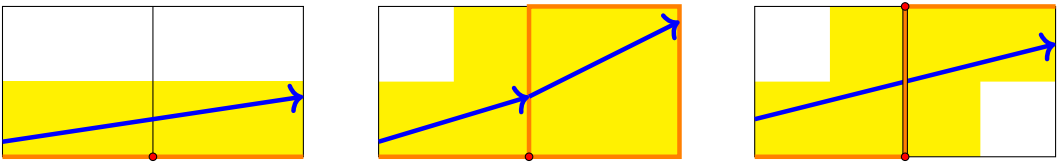
\end{center}

\begin{remark}
The analogue of Proposition \ref{prop:vs}(2) is wrong for non-strict d-paths. Let $X$ be the cubical set (consisting of two 2-cubes) corresponding to $[0,2]\times [0,1]$. The d-path in Figure \ref{fig:non} that linearly connects $(0,0), (0,0.5), (2,0,5)$ and $(2,1)$ is not contained in $\vec{P}^C_{\mb{c}}(X)_{(0,0)}^{(2,1)}$ for any cube chain connecting $(0,0)$ with $(2,1)$.
\end{remark}

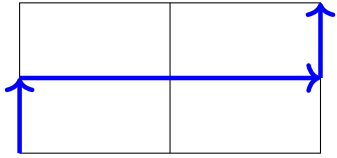
\begin{figure}[h]
\begin{tikzpicture}
\draw (0,0) -- (2,0) -- (4,0) -- (4,2) -- (2,2) -- (0,2) -- (0,0);
\draw (2,0) -- (2,2);
\draw[->, line width=0.6mm, color=blue] (4,1) -- (4,2);
\draw[->, line width=0.6mm, color=blue] (0,0) -- (0,1);
\draw[->, line width=0.6mm, color=blue] (0,1) -- (4,1);
\end{tikzpicture}
\caption{A d-path that is not contained in the collar of any cube chain from bottom to top}\label{fig:non}
\end{figure}

\subsection{A poset category of cube chains}\label{ss:pathcol}
\begin{definition}\label{def:pcc}
\begin{enumerate}
\item An \emph{elementary refinement} of a cube $c$ consists of two subsequent faces $d_{[J_0|[1:n]\setminus J_0|\emptyset ]}c$ and $d_{[\emptyset |J_0|[1:n]\setminus J_0]}c$ with $\emptyset\neq J_0\neq [1:n]$.  
\item An elementary refinement \emph{of a cube chain} $\mb{c}$ arises by replacing a single cube $c_i$ by one of its elementary refinements. 
\item A \emph{refinement} of a cube chain arises by reflexive and transitive closure of elementary refinements. \item Refinement between cube chains in $X$ from a vertex $x^-$ to a vertex $x^+$ defines a \emph{partial order} relation among cube chains that gives rise to a thin (poset) category $\mc{C}(X)_{x^-}^{x^+}$ with cube chains as objects and refinements as morphisms. 
\end{enumerate}
\end{definition}
Remark that, for a general $\Box$-set $X$, the category $Ch(X)$ in Ziemia\'{n}ski's \cite[Section 7]{Ziemianski:19} differs from  
this poset category. Ziemianski's cube chains are concatenations of \emph{cubical maps} from a standard cube into a cube in $X$; moreover cubical symmetries of the standard cube that induce identities in the $\Box$-set are an additional part of the structure.  
\begin{proposition}\label{prop:refine}
All cube chains are supposed to be cube chains in $X$ from $x^-$ to $x^+$.
\begin{enumerate}
\item $\mb{c}'$ refines $\mb{c}$ if and only if $\vec{S}_{\mb{c}'}^C(X)_{x^-}^{x^+}\subseteq\vec{S}_{\mb{c}}^C(X)_{x^-}^{x^+}.$
\item If $\mb{c}'$ is a proper refinement of $\mb{c}$, then $\vec{S}_{\mb{c}'}^C(X)_{x^-}^{x^+}\subset\vec{S}_{\mb{c}}^C(X)_{x^-}^{x^+}$ is a proper subset.
\item For every strict d-path $p\in\vec{S}(X)$, the chain $\mb{c}(p)$ (Proposition \ref{prop:vs}(2)) is \emph{finest} among the cube chains $\mb{c}$ with $p\in\vec{S}_{\mb{c}}^C(X)_{x^-}^{x^+}$.
\item Path spaces $\vec{S}_{\mb{c}}^C(X)_{x^-}^{x^+}, \vec{S}_{\mb{c}'}^C(X)_{x^-}^{x^+}$ intersect iff $\mb{c}$ and $\mb{c}'$ have a common refinement. Likewise for spaces of strict tame paths.
\item $\vec{S}_{\mb{c}''}^C(X)_{x^-}^{x^+}\subseteq\vec{S}_{\mb{c}}^C(X)_{x^-}^{x^+}\cap\vec{S}_{\mb{c}'}^C(X)_{x^-}^{x^+}$ for a common refinement $\mb{c}''$ of $\mb{c}$ and $\mb{c}'$. If $\mb{c''}$ is a coarsest common refinement (if a such exists), then $\vec{S}_{\mb{c}''}^C(X)_{x^-}^{x^+}=\vec{S}_{\mb{c}}^C(X)_{x^-}^{x^+}\cap\vec{S}_{\mb{c}'}^C(X)_{x^-}^{x^+}$.
\item If $X$ is proper and non-self-linked (cf Definition \ref{df:pcs}(2-3)), then two cube chains either do not have a common refinement or they have a coarsest one (for which equality holds in (5)).
\item Similar results hold for spaces of strict and tame d-paths.
\end{enumerate}
\end{proposition}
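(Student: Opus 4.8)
The plan is to organize the proof around statement (1), from which most of the others follow by poset bookkeeping, and to reserve the genuinely geometric work for statements (1), (2), (3) and (6). First I would prove (1). The ``only if'' direction is immediate: an elementary refinement of a single cube $c_i$ replaces the collar $C(c_i,X)$ by the union of collars of two abutting proper faces of $c_i$, and by Remark \ref{rem:open}(2) (monotonicity of collars under the face relation, together with the fact that a strict d-path subordinate to the refinement passes through the intermediate vertex $d_{[J_0|\emptyset|[1:n]\setminus J_0]}I^n$), such a path is still subordinate to the collar of $\mb c$; iterating and taking closures gives the claim for arbitrary refinements. The ``if'' direction is where Proposition \ref{prop:vs}(2) does the work: given a strict d-path $p$ with $\vec{S}_{\mb c'}^C\subseteq\vec{S}_{\mb c}^C$, pick $p\in\vec{S}_{\mb c'}^C$ (non-empty by Proposition \ref{prop:vs}(1)) with $\mb c(p)=\mb c'$ as fine as possible; then $p\in\vec S_{\mb c}^C$, and a local analysis of where $p$ crosses the coordinate middle hyperplanes shows that the cube sequence $\mb c$ must be obtainable from $\mb c(p)=\mb c'$ by merging consecutive cubes across intermediate vertices, i.e. $\mb c'$ refines $\mb c$. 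Statement (3) is essentially a byproduct: the construction in Proposition \ref{prop:vs}(2) produces the finest cube chain whose collar contains $p$, and combined with (1) this is exactly the minimal element among the $\mb c$ with $p\in\vec S_{\mb c}^C$.

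For (2), properness of the refinement means at least one elementary refinement genuinely replaces a cube $c_i$ of dimension $n\ge 2$ by two lower-dimensional faces whose collars do not jointly cover $C(c_i,X)$ — concretely, the diagonal path $\delta(c_i)$ of Proposition \ref{prop:vs}(1), suitably reparametrized, lies in $\vec S_{\mb c}^C$ but not in $\vec S_{\mb c'}^C$ because its image meets the interior of $c_i$ at the crossing of the middle hyperplanes, a point excluded from every proper-face collar. Statements (4) and (5) are then formal consequences of (1): if $q\in\vec S_{\mb c}^C\cap\vec S_{\mb c'}^C$ then by (3) the finest chain $\mb c(q)$ refines both $\mb c$ and $\mb c'$, giving a common refinement; conversely any common refinement $\mb c''$ satisfies $\vec S_{\mb c''}^C\subseteq\vec S_{\mb c}^C\cap\vec S_{\mb c'}^C$ by (1), which is non-empty by Proposition \ref{prop:vs}(1); the inclusion in the first sentence of (5) is (1) again, and for the equality one checks that a strict d-path $q$ subordinate to the collars of both $\mb c$ and $\mb c'$ has $\mb c(q)$ refining both, hence refining any \emph{coarsest} common refinement $\mb c''$, whence $q\in\vec S_{\mb c''}^C$ by (1).

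The main obstacle is statement (6): showing that in a proper non-self-linked $\Box$-set any two cube chains with a common refinement have a \emph{coarsest} one. Here I would use Remark \ref{rem:erase} and Definition \ref{df:pcs}(3): in the proper case a cube chain is determined by its vertex sequence, so refinement becomes a purely combinatorial operation of inserting vertices, and the set of common refinements of $\mb c$ and $\mb c'$ corresponds to vertex sequences containing both of their vertex sequences as subsequences together with the compatibility constraint that between two consecutive ``kept'' vertices the inserted cube is a genuine face of the relevant cube in \emph{both} parent chains — non-self-linkedness guarantees these face conditions are consistent and that the ``union'' of the two vertex sequences (taking, on each common subinterval, the coarser of the two subdivisions) is again a legal cube chain. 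One then verifies this union refines both and is refined by every common refinement, i.e. it is the coarsest common refinement; and by (5) equality of path spaces holds for it. Statement (7) — the analogues for $\vec{ST}$, and for the tame (non-strict) spaces — follows by repeating the arguments with the diagonal/vertex-crossing analysis restricted to tame presentations, using Lemma \ref{lem:flow}(3) and Proposition \ref{prop:strict} to transfer between the strict and non-strict settings; I would state it as a remark that the same combinatorial skeleton $\mc C(X)_{x^-}^{x^+}$ governs all four families.
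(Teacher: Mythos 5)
Your argument for parts (1)--(5) and (7) is essentially the paper's: the ``only if'' of (1) from monotonicity of collars, the ``if'' of (1) and part (2) by exhibiting the diagonal path of the finer chain and checking it escapes the coarser collar (the centre of a cube lies in no proper-face collar), (3) from the middle-hyperplane analysis behind Proposition \ref{prop:vs}(2), and (4)--(5) as formal consequences via $\mb{c}(p)$. Where you genuinely diverge is (6): the paper runs an induction on the length $|\mb{c}|$, peeling off at each step the \emph{maximal common lower cube} of the first cubes of the two chains (uniqueness of that cube being where properness enters), whereas you propose a one-shot construction taking the union of the two vertex sequences. Your route is arguably more transparent, but the crux is exactly the step you only assert: for consecutive vertices $u<v$ of the union, the face of $c_i$ spanned by $u,v$ and the face of $c'_j$ spanned by $u,v$ must be shown to coincide (properness gives this, since both are cubes with bottom $u$ and top $v$), and one must check that within each $c_i$ the resulting faces assemble into an iterated elementary refinement; without that, ``the face conditions are consistent'' is a claim, not a proof. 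Also note that your parenthetical ``taking, on each common subinterval, the coarser of the two subdivisions'' is backwards --- the coarsest common refinement must contain \emph{all} vertices of both chains, i.e.\ the finer subdivision on each stretch; the word ``union'' is the correct one. Finally, be aware (as your own remark about chains through extraneous vertices suggests) that containment of vertex sequences alone does not characterize refinement even in the proper case, so the reduction to ``inserting vertices'' must always carry the face condition along, as you do.
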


\begin{proof}
\begin{enumerate}
\item $\Rightarrow$ from the definition of collars and paths in collars in Definition \ref{def:subo}.\\
$\Leftarrow$: Assume $\mb{c}'=(c_1',\dots , c_{l'})$ does not refine $\mb{c}=(c_1,\dots ,c_l)$ and such that the prefix $(c_1',\dots ,c'_{k'})$ refines a minimal prefix of $(c_1,\dots ,c_k)$ but $c'_{k'+1}$ is not a face of  $c_k$ nor of $c_{k+1}$. Then the diagonal path $\delta (c'_1)*\dots *\delta (c'_{l'})\in \vec{S}_{\mb{c}'}^C(X)_{x^-}^{x^+}$ is not contained in $\vec{S}_{\mb{c}}^C(X)_{x^-}^{x^+}$.  
\item The diagonal path in $\vec{ST}_{\mb{c}}(X)_{x^-}^{x^+}\subseteq \vec{S}_{\mb{c}}^C(X)_{x^-}^{x^+}$ from the proof of Proposition \ref{prop:vs}(1) is not contained in $\vec{S}_{\mb{c}'}^C(X)_{x^-}^{x^+}$.
\item Every elementary refinement (cf above) of the cube chain $\mb{c}(p)$ described above results in a collar that does not intersect at least one of the hyperplanes $x_j=0.5$ with $j\in J_*$ within a cube $c_i$. In particular, it cannot contain $p(I)$ which has a non-empty intersection with  this hyperplane.
\item $\Leftarrow$ follows from (1) and Proposition \ref{prop:vs}(1).\\ $\Rightarrow$: By (3) above, the cube chain $\mb{c}(p)$ for a path $p$ in the intersection refines both $\mb{c}$ and $\mb{c}'$.
\item The first part follows from (1) above. Let  $p\in\vec{S}_{\mb{c}}^C(X)_{x^-}^{x^+}\cap\vec{S}_{\mb{c}'}^C(X)_{x^-}^{x^+}$. By Proposition \ref{prop:vs}(2), $p\in\vec{S}_{\mb{c}(p)}^C(X)_{x^-}^{x^+}$, and by (3) above, $\mb{c}(p)$ refines both $\mb{c}$ and $\mb{c}'$ and thus $\mb{c}''$. Apply (1) above to $\mb{c}(p)$ and $\mb{c}''$.
\item First note that the length of a cube chain (cf Definition \ref{def:cc}(4)) is invariant under refinements. The proof proceeds by induction on this length. The statement is obviously true for cube chains of length $0$ and $1$. Let $\mb{c}=(c_1,\dots ,c_n), \mb{c'}=(c'_1,\dots ,c'_n)$ denote two cube chains with a common refinement. There is a number of vertices (at least one) such that the edge from $x^-$ to that vertex refines the first face of some common refinement of $c_1$ and of $c_1'$. All of these vertices span a (unique since $X$ is proper) maximal common lower cube $d$ of $c_1$ and $c_1'$ giving rise to elementary refinements $(d,d_1)$ of $c_1$ and $(d,d_1')$ of $c_1'$. Then the cube chains arising by replacing $c_1$ by $d_1$ and $c_1'$ by $d_1'$ have a shorter length and hence a coarsest common refinement. Add $d$ to the resulting cube chain at the beginning.
\end{enumerate}
\end{proof}

\begin{example}
Let $X$ denote the non-proper $\Box$-set from Example \ref{ex:cubset}(2) consisting of two 2-cubes $c^1, c^2$ glued along a common boundary consisting of four edges. Then the cube chains consisting solely of $c^1$, resp.\ of $c^2$ possess two common refinements (consisting of two consecutive boundary edges) but no coarsest such.
\end{example}

\subsection{Path spaces as colimits}
We define functors into the category $\mathbf{Top}$ of topological spaces: $\vec{S}, \vec{T}$ and $\vec{ST}: \mc{C}(X)_{x^-}^{x^+}\to\mathbf{Top}$ by\\
\(\vec{S}(\mb{c})=\vec{S}_{\mb{c}}^C(X)_{x^-}^{x^+}\), $\vec{T}(\mb{c})=\vec{T}_{\mb{c}}^C(X)_{x^-}^{x^+}$, and \(\vec{ST}(\mb{c})=\vec{ST}_{\mb{c}}^C(X)_{x^-}^{x^+}\).\\
Refinement of cube sequences is reflected in inclusion of path spaces (Proposition \ref{prop:refine}(1)).\\ As a consequence of Proposition \ref{prop:vs} and \ref{prop:refine}, we conclude: 

\begin{corollary}
Let $X$ denote a $\Box$-set with vertices $x^-, x^+\in X_0$. Then 
\[\vec{S}(X)_{x^-}^{x^+}=\colim_{\mc{C}(X)_{x^-}^{x^+}}\vec{S},\qquad  \vec{T}(X)_{x^-}^{x^+}=\colim_{\mc{C}(X)_{x^-}^{x^+}}\vec{T},\qquad \text{ and }\vec{ST}(X)_{x^-}^{x^+}=\colim_{\mc{C}(X)_{x^-}^{x^+}}\vec{ST}.\] 
\end{corollary}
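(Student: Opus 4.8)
The plan is to identify each colimit with an explicit union of open subspaces of the corresponding total path space. First I would note that, by Proposition~\ref{prop:refine}(1), the inclusions $\vec{S}_{\mb{c}}^C(X)_{x^-}^{x^+}\hookrightarrow\vec{S}(X)_{x^-}^{x^+}$ are compatible with every refinement morphism of $\mc{C}(X)_{x^-}^{x^+}$ and hence assemble into a cocone on the functor $\vec{S}$. This produces a continuous comparison map $\phi\colon\colim_{\mc{C}(X)_{x^-}^{x^+}}\vec{S}\to\vec{S}(X)_{x^-}^{x^+}$, and the whole statement (for $\vec{S}$, and then verbatim for $\vec{T}$ and $\vec{ST}$) amounts to showing that $\phi$ is a homeomorphism.

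For bijectivity I would work with underlying sets. The colimit is $\bigl(\coprod_{\mb{c}}\vec{S}_{\mb{c}}^C(X)_{x^-}^{x^+}\bigr)/{\sim}$, where the identifications only relate pairs $(p,\mb{c})$, $(q,\mb{d})$ with $p=q$; so $\phi$ is surjective exactly because the $\vec{S}_{\mb{c}}^C(X)_{x^-}^{x^+}$ cover $\vec{S}(X)_{x^-}^{x^+}$, which is Proposition~\ref{prop:vs}(2). For injectivity I need that a strict d-path $p$ lying in both $\vec{S}_{\mb{c}}^C(X)_{x^-}^{x^+}$ and $\vec{S}_{\mb{d}}^C(X)_{x^-}^{x^+}$ has its two representatives identified in the colimit; this follows from Proposition~\ref{prop:vs}(2) together with Proposition~\ref{prop:refine}(3), which supply the canonical cube chain $\mb{c}(p)$ refining both $\mb{c}$ and $\mb{d}$ with $p\in\vec{S}_{\mb{c}(p)}^C(X)_{x^-}^{x^+}$, yielding the zig-zag $\mb{c}\leftarrow\mb{c}(p)\rightarrow\mb{d}$.

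The step that needs care, and the one I expect to be the main (if mild) obstacle, is matching the topologies: a priori the colimit topology is only finer than the subspace topology on the union, and equality holds here precisely because the cover is \emph{open}. Concretely, $U\subseteq\vec{S}(X)_{x^-}^{x^+}$ is colimit-open iff each $U\cap\vec{S}_{\mb{c}}^C(X)_{x^-}^{x^+}$ is open in $\vec{S}_{\mb{c}}^C(X)_{x^-}^{x^+}$; by Proposition~\ref{prop:open} each $\vec{S}_{\mb{c}}^C(X)_{x^-}^{x^+}$ is itself open in $\vec{S}(X)_{x^-}^{x^+}$, so this says each $U\cap\vec{S}_{\mb{c}}^C(X)_{x^-}^{x^+}$ is open in $\vec{S}(X)_{x^-}^{x^+}$, whence $U=\bigcup_{\mb{c}}\bigl(U\cap\vec{S}_{\mb{c}}^C(X)_{x^-}^{x^+}\bigr)$ is open in $\vec{S}(X)_{x^-}^{x^+}$. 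Thus $\phi$ is open and hence a homeomorphism.

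Finally, the cases of $\vec{T}$ and $\vec{ST}$ run through the same three inputs in their tame forms: openness of $\vec{T}_{\mb{c}}^C(X)_{x^-}^{x^+}$ and $\vec{ST}_{\mb{c}}^C(X)_{x^-}^{x^+}$ (Proposition~\ref{prop:open}(2)); the observation that a tame, resp.\ strictly tame, d-path, written with a presentation whose segments run from bottom to top vertex of a cube, already lies in $\vec{T}_{\mb{c}}(X)\subseteq\vec{T}_{\mb{c}}^C(X)_{x^-}^{x^+}$, resp.\ in its strict analogue, for the cube chain $\mb{c}$ it traverses, so that these pieces cover; and the identification of overlapping pieces, which is the ``likewise/similar'' content of Proposition~\ref{prop:refine}(3)--(5),(7). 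The only genuinely new check here is that a tame d-path that is not strict still carries a finest cube chain whose collar receives it --- exactly the transferred form of Propositions~\ref{prop:vs}(2) and \ref{prop:refine}(3).
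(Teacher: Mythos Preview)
Your argument is correct and is, in essence, a careful expansion of what the paper offers as justification: the paper states the Corollary ``as a consequence of Proposition~\ref{prop:vs} and \ref{prop:refine}'' and then simply remarks that ``the colimit identifies path spaces in finer cube chains with subspaces of path spaces in coarser ones; the colimit is therefore just a union of topological spaces.'' You have unpacked exactly this: the cocone, the set-level bijection via covering and zig-zags, and the topology match via openness of the cover.

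One point deserves sharpening. In the final paragraph you write that for a tame but non-strict $p$ the needed input is ``exactly the transferred form of Propositions~\ref{prop:vs}(2) and \ref{prop:refine}(3)''. That is not quite right: the paper explicitly notes (Remark following Proposition~\ref{prop:vs}, with Figure~\ref{fig:non}) that the analogue of Proposition~\ref{prop:vs}(2) \emph{fails} for non-strict d-paths, so there is no straightforward ``transferred form''. What actually does the work in the tame case is different and easier: a tame $p$ comes with a cube chain $\mb{e}$ from any tame presentation, and one checks directly that if $p\in\vec{T}_{\mb{c}}^C(X)_{x^-}^{x^+}$ then the sub-poset $\{\mb{c}:p\in\vec{T}_{\mb{c}}^C(X)_{x^-}^{x^+}\}$ is upward closed and directed (any two members have a common refinement still containing $p$), so the required zig-zag exists. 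A quick way to see directedness is to strictify: for $s>0$ small the path $\mc{S}(s,p)$ is strict tame and, by openness (Proposition~\ref{prop:open}) and continuity of $\mc{S}$, still lies in $\vec{T}_{\mb{c}}^C\cap\vec{T}_{\mb{d}}^C$; then Proposition~\ref{prop:refine}(4) supplies a common refinement, and upward closure puts $p$ in its collar. The paper is equally terse on this point (hiding it under Proposition~\ref{prop:refine}(7)), so this is a refinement of presentation rather than a gap in your reasoning.
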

The colimit identifies path spaces in finer cube chains with subspaces of path spaces in coarser ones; the colimit is therefore just a union of topological spaces.

\section{Comparing paths in a cube chain and in its collar}
Let $\mb{c}=(c_1,\dots ,c_n)$ denote a cube chain in a $\Box$-set $X$ between vertices $x^-$ and $x^+$. The purpose of this section is to compare spaces of strict paths \emph{within} a cube chain $\mb{c}$ with those \emph{subordinate} to it. The notation was fixed in Definition \ref{def:subo}.

\begin{proposition}\label{prop:contr}
\begin{enumerate}
\item $\vec{ST}_{\mb{c}}(X)_{x^-}^{x^+}\hookrightarrow\vec{S}_{\mb{c}}^C(X)_{x^-}^{x^+}$ is a deformation retract.
\item $\vec{ST}_{\mb{c}}(X)_{x^-}^{x^+}\hookrightarrow\vec{ST}_{\mb{c}}^C(X)_{x^-}^{x^+}$ is a deformation retract.
\end{enumerate}
\end{proposition}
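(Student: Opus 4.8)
The plan is to produce, for each of the two inclusions, a strong deformation retraction onto $\vec{ST}_{\mb{c}}(X)$; the only ingredient is a \emph{directed} collapse of the collar $C(c_i,X)$ of each cube $c_i$ of the chain onto $c_i$ itself. Recall that $C(c_i,X)$ is assembled from the half‑open blocks $C_{[J_0|J_*|J_1]}c'\cong[0,\tfrac12)^{J_0}\times I^{J_*}\times(\tfrac12,1]^{J_1}$ attached to the cubes $c'$ of $X$ that share a face with $c_i$. On such a block I would slide, linearly in a parameter $s\in I$, each $J_0$‑coordinate down to $0$ and each $J_1$‑coordinate up to $1$, keeping the $J_*$‑coordinates fixed; this defines $R_i^s\colon C(c_i,X)\to C(c_i,X)$ with $R_i^0=\operatorname{id}$ and image $c_i$ at $s=1$. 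Each $R_i^s$ is coordinatewise non‑decreasing, hence directed, and it carries \emph{strict} d‑paths to strict d‑paths, since a coordinate that has been pinned to $0$ or $1$ satisfies the strictness clause of Definition~\ref{def:present}(1) vacuously. Moreover $R_i^s$ fixes $c_i$ and all its faces pointwise, in particular every vertex of $X$ lying in $C(c_i,X)$ — which, as noted in \S\ref{ss:collar}, are exactly the vertices of $c_i$. For $n=1$, where the endpoints $x^-,x^+$ already lie in $c_1$, the family $R_1^s$ is by itself a strong deformation retraction of $\vec{S}_{(c_1)}^C(X)$ onto $\vec{ST}_{(c_1)}(X)$.

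For a general chain, given $p=p_1*_{t_1}\dots*_{t_{n-1}}p_n\in\vec{S}_{\mb{c}}^C(X)$ with $p_i$ running in $C(c_i,X)$ from $y_{i-1}\in st(x_{i-1})$ to $y_i\in st(x_i)$ (and $y_0=x^-$, $y_n=x^+$), I would move the $i$‑th segment by $R_i^s$. The shared breakpoint $y_i$ is then dragged to two points, $R_i^s(y_i)$ and $R_{i+1}^s(y_i)$; the point to check is that near $x_i$, which is the top vertex of $c_i$ and the bottom vertex of $c_{i+1}$, the two collapses differ only in the directions that are ``free'' for $c_i$ — left fixed by $R_i^s$ and pushed \emph{towards} $x_i$ by $R_{i+1}^s$ — so that $R_i^s(y_i)\le R_{i+1}^s(y_i)$ coordinatewise and the straight segment joining them is a strict d‑path which degenerates to a constant at $s=0$. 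I would then define $H^s(p)$ to be the concatenation (suitably reparametrised) of $R_1^s(p_1)$, the splicing segment at $y_1$, $R_2^s(p_2)$, the splicing segment at $y_2$, \dots, $R_n^s(p_n)$. Continuity of $s\mapsto H^s(p)$ in the compact‑open topology follows as for the strictifying homotopy $\mc{S}$ in \S\ref{ss:vf} (because $I$ is compact Hausdorff), and $H^s(p)$ remains subordinate to $C(\mb{c})$ for every $s$ since each moved segment stays in its collar and each splicing segment stays in the relevant star.

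It then remains to read off the three required properties. $H^0(p)=p$; at $s=1$ each $R_i^1(p_i)$ runs inside $c_i$ between points near its bottom and top vertices while the splicing segments become straight segments inside $c_i$, resp.\ $c_{i+1}$, reaching the vertices $x_i$, so $H^1(p)$ has a presentation $[c_1;\,\cdot\,]*\dots*[c_n;\,\cdot\,]$ with strict increasing coordinate paths, i.e.\ $H^1(p)\in\vec{ST}_{\mb{c}}(X)$; and if $p$ already lies in $\vec{ST}_{\mb{c}}(X)$ then each $p_i$ lies in $c_i$ with $y_i=x_i$, so every $R_i^s$ and every splicing segment is trivial and $H^s(p)=p$ throughout. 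This proves~(1). For~(2) one adds the observation that $H^s$ fixes all vertices of $X$ for all $s$ (the $R_i^s$ do, and the splicing segments stay inside a single cube), so a presentation of $p$ whose breakpoints are vertices retains vertex breakpoints along the whole homotopy; restricting $H$ to $\vec{ST}_{\mb{c}}^C(X)$ then gives the deformation retraction onto $\vec{ST}_{\mb{c}}(X)$.

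The hard part will be the compatibility claim in the second paragraph: verifying that, at each interior breakpoint $y_i$, the collapses $R_i^s$ and $R_{i+1}^s$ disagree only in $c_i$'s free directions and in the increasing sense, so that the splicing segments exist, are directed, and depend continuously on $s$ and on $p$ (down to $s=0$, where they are constant). This is precisely where the half‑open conventions for the blocks $C_{[J_0|J_*|J_1]}c'$ and the combinatorics of which face of $c'$ equals $c_i$ are used in an essential way (and where one should also check that the cube‑wise recipe is well‑defined on a self‑linked $\Box$‑set, different partitions defining the same face of $c'$ being compatible because the slide depends only on the partition data). Everything else — directedness, preservation of strictness and of tameness, continuity in the compact‑open topology, and the identification of the time‑$1$ image with $\vec{ST}_{\mb{c}}(X)$ — is bookkeeping of the kind already carried out for the homotopy $\mc{S}$ in \S\ref{ss:vf}.
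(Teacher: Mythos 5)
There is a genuine gap, and it sits exactly where you locate the ``hard part'': the splicing. The deeper problem, though, is not the compatibility of $R_i^s$ and $R_{i+1}^s$ at $y_i$ (your coordinatewise inequality $R_i^s(y_i)\le R_{i+1}^s(y_i)$ does hold), but that the resulting time-$1$ path does not lie in $\vec{ST}_{\mb{c}}(X)_{x^-}^{x^+}$. Because $R_i^s$ keeps the $J_*$-coordinates \emph{fixed}, the endpoints of $R_i^1(p_i)$ are the points of $c_i$ whose $J_*$-coordinates are those of $y_{i-1}$ and $y_i$ -- points \emph{near} the bottom and top vertices of $c_i$, not equal to them -- and the straight segment from $R_i^1(y_i)$ to $R_{i+1}^1(y_i)$ then cuts through the interior of an ambient cube instead of passing through the vertex $x_i$. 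Concretely, take $X=\Box^2$ and $\mb{c}=(c_1,c_2)$ with $c_1=I\times\{0\}$, $c_2=\{1\}\times I$, and let $p$ be the concatenation of the straight segments $(0,0)\to(0.8,0.3)\to(1,1)$, with breakpoint $y_1=(0.8,0.3)\in st((1,0))$. Your recipe gives $R_1^1(p_1)$ running $(0,0)\to(0.8,0)$ in $c_1$, a splice $(0.8,0)\to(1,0.3)$ through the interior of the square, and $R_2^1(p_2)$ running $(1,0.3)\to(1,1)$ in $c_2$. This path never visits the vertex $(1,0)$, so it admits no presentation $[c_1;q_1]*[c_2;q_2]$ with $q_i$ from bottom to top vertex; it is not in $\vec{ST}_{(c_1,c_2)}(X)$, and is not even contained in $c_1\cup c_2$. (Rerouting the splice through $x_i$ at $s=1$ does not help: at $s=0$ that splice must degenerate to a constant at $y_i$, but a path $y_i\to x_i\to y_i$ is neither constant nor directed.)

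A second, related gap is that $H^s(p)$ as defined depends on the chosen presentation: moving the breakpoint $y_i$ to another point of $st(x_i)$ along $p$ changes which portion of the path is moved by $R_i^s$ versus $R_{i+1}^s$, and one checks on the example above that the resulting curves genuinely differ. So without a canonical, continuous choice of break times your $H$ is not a well-defined (let alone continuous) self-map of the path space. It is telling that your argument nowhere uses strictness in an essential way. The paper's proof repairs both defects at once: instead of freezing the $J_*$-coordinates it rescales them affinely, $q_i(t)=\bigl(p_i(t)-p_i(a_j)\bigr)/\bigl(p_i(b_j)-p_i(a_j)\bigr)$, between break times $t_j$ defined canonically as the unique solutions of $\min_{i\in J_*^j}p_i(t)+\max_{i\in J_*^{j+1}}p_i(t)=1$. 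Strictness is exactly what guarantees uniqueness of these crossing times, Lemma \ref{lem:cont} gives their continuity in $p$, the rescaling forces the tamed path through the vertices $x_j$, and the convex-combination homotopy to $\tau(p)$ then fixes $\vec{ST}_{\mb{c}}(X)$ pointwise. Your cube-wise collapse $R_i^s$ (whose well-definedness on overlapping collar blocks you also leave open) cannot be salvaged without importing some such device.
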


\begin{proof}
(1) will be proved through a cubewise taming construction - first for individual d-paths and then for spaces of such. The proof of (2) follows the same pattern; since taming is performed cubewise, the d-paths and d-homotopies in the construction below taking departure in tame d-paths \emph{stay tame}.

To prove (1), we consider in a first step only strict d-paths subordinate to the collar of a cube chain $\mb{d}=(d_1,\dots ,d_k)$ \emph{within the standard cube} $I^n$; $d_j$ is the face $d_{[J_0^j|J_*^j|J_1^j]}I^n$ with bottom vertex $v_{j-1}=d_{[J_0^j\cup J_*^j|\emptyset |J_1^j]}I^n$ and top vertex $v_j=d_{[J_0^j|\emptyset |J_*^j\cup J_1^j]}I^n$. 

For each cube $d_j$, we define a taming map (its geometric interpretation is explained in Remark \ref{rem:taming}(1)) $\tau_j:\vec{S}_{[a_j,b_j]}(I^n)_{st(v_{j-1})}^{st(v_j)}\to \vec{ST}_{[a_j,b_j]}(d_j)_{v_{j-1}}^{v_j}$ by associating to the path $p=(p_i)$ the path $\tau (p)=q=(q_i)$ given by 
\begin{equation}\label{eq:t(p)} q_i(t)=\begin{cases}0 & i\in J_0\\ \frac{p_i(t)-p_i(a_j)}{p_i(b_j)-p_i(a_j)} & i\in J_*\\ 1 & i\in J_1 \end{cases}.\end{equation}

Where to fit these constructions for consecutive cubes $d_j$, which domain intervals $[a_j,b_j]$ should one choose? If $k=1$ (only one cube in the chain), it is the entire domain interval. 

For $k>1$, consider the sequence of piecewise linear hypersurfaces $H_j\subset I^n,\; 1\le j< k,$ given by the equations $m_j(x)=1, x\in I^n,$ with $m_j(x_1,\dots ,x_n):=\min_{i\in J^j_*}x_i+\max_{i\in J^{j+1}_*}x_i$.
Remark that the collars of $d_j$ and of $d_{j+1}$ intersect  in the star $st(v_j)$ of $v_j$.

For a strict d-path, $p\in \vec{S}^C_{\mb{d}}(I^n)$, consider the functions given by the compositions $m_j\circ p: I\to\mb{R},\; 1\le j\le k$; they are \emph{strictly increasing}. When $p$ enters $\overline{st(v_j)}$ at $t=t_j^-$, we have $\min_{i\in J^j_*}p_i(t_j^-)=0.5$, whereas $ \max_{i\in J^{j+1}_*}p_i(t_j^-)<0.5$; their sum being less than $1$. When $p$ exits $\overline{st(v_j)}$ at $t=t_j^+$, we have $\max_{i\in J^{j+1}_*}p_i(t_+)=0.5$ and $\min_{i\in J^j_*}p_i(t_j^+)>0.5$; their sum being greater than $1$. We conclude that there exists a unique ascending sequence $t_1<t_2<\dots <t_{k-1}$ such that $m_j(p(t_j))=\min_{i\in J^j_*}p_i(t_j)+\max_{i\in J^{j+1}_*}p_i(t_j)=1,\; 1\le j<k$. Remark that $p(t_j)\in st(v_j)$. 

\begin{center}
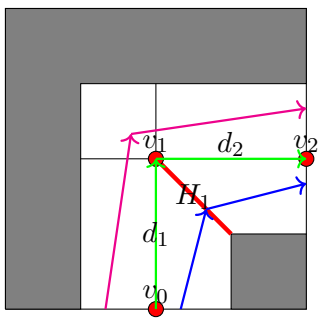
\begin{figure}[h]
\begin{tikzpicture}
\draw (0,0) -- (4,0) --  (4,4) -- (0,4) -- (0,0);
\draw (2,0) -- (2,4);
\draw (0,2) -- (4,2);
\draw[fill=gray] (0,0) -- (1,0) -- (1,3) -- (4,3) -- (4,4) -- (0,4) -- (0,0);
\draw[fill=gray] (3,0) -- (4,0) -- (4,1) -- (3,1) -- (3,0);
\draw[line width=0.6mm, color=red] (2,2) -- (3,1);
\node at (2.5,1.5) {$H_1$};
\draw[fill=red] (2,0) circle [radius=0.1]; 
\draw[fill=red] (2,2) circle [radius=0.1]; 
\draw[fill=red] (4,2) circle [radius=0.1]; 
\draw[->, magenta, line width=0.3mm]  (1.67,2.33) -- (4,2.67);
\draw[->, magenta, line width=0.3mm]  (1.33,0) -- (1.67,2.33); 
\draw[->, blue, line width=0.3mm] (2.67,1.33) -- (4,1.67);
\draw[->, blue, line width=0.3mm] (2.33,0) -- (2.67,1.33); 
\draw[->, green, line width=0.3mm] (2,2) -- (4,2);
\draw[->, green, line width=0.3mm] (2,0) -- (2,2);
\node at (2,0.2) {$v_0$};
\node at (2,2.2) {$v_1$};
\node at (4,2.2) {$v_2$};
\node at (2,1){$d_1$};
\node at ((3,2.2){$d_2$};
\end{tikzpicture}
\caption{Cube chain $(d_1,d_2)$, its collar, line hypersurface $H_1$ within a 2-cube, d-paths (blue, within one cube; magenta through three cubes) with the same taming (green)}
\end{figure}
\end{center}

We define then the \emph{tame} $d$-path $\tau (p)=q$ as the concatenation $q_1*_{t_1}q_2*_{t_2}\dots *_{t_{k-1}}q_k$ of strict d-paths $q_j=\tau (p|[t_{j-1},t_j])$ (from $v_{j-1}$ to $v_j$). Furthermore, we define a linear d-homotopy $H=H_s$ of strict d-paths $H_s\in\vec{S}_{\mb{d},[t_0,t_k]}^C(I^n)$ connecting $p$ and $q=\tau (p)$ as\\ $H(t,s)=(1-s)p(t)+sq(t)$.

\end{proof}

\begin{remark}\label{rem:taming}
\begin{enumerate}
\item Formula (\ref{eq:t(p)}) has the following interpretation: On the interval $[t_j,t_{j+1}]$, the components of 
$p$ in $I^j_0$ resp.\ in $I^j_1$ are compressed to $0$, resp.\ $1$ (the respective processes in an HDA are idle), whereas its components in $I^j_*$ are stretched to fill the entire interval $[0,1]$ (ie, the respective processes take a full step). Remark that $q(t_j)$ is a vertex for every $0\le j\le k$. 
\item The definition of the taming function $\tau$ and of the taming homotopy $H$ above on the collar of a cube chain $\mb{d}'$ that is finer than $\mb{d}$ is the restriction of the respective functions and homotopies corresponding to $\mb{d}$. 
\item For a strict d-path $p$ that is already contained in the cube chain $\mb{c}$ itself, $p(t)$ solves the equation $\min_{i\in J^j_*}x_i+\max_{i\in J^{j+1}_*}x_i=1$ exactly at $t_j$ with $p(t_j)=v_j$. Hence $\tau (p)=p$ and $H(t,s)=p(t)$ for $s\in I$.
\end{enumerate}
\end{remark}

\begin{lemma}\label{lem:cont}
For every cube chain $\mb{d}$ in $I^n$, the times $t_j=t_j(p),\; 1\le j<k,$ define \emph{continuous} functions $t_j: \vec{S}_{\mb{d}}^C(I^n)_{st(v_0)}^{st(v_k)}\to I,\; p\mapsto t_j(p)$. 
\end{lemma}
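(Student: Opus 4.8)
The plan is to realize $t_j$ as the solution of an implicit equation and then to combine continuity of the evaluation map with the intermediate value theorem. The construction preceding the lemma already supplies the two facts I need about a \emph{single} path: for fixed $p\in\vec{S}_{\mb{d}}^C(I^n)_{st(v_0)}^{st(v_k)}$ the function $t\mapsto m_j(p(t))$ is strictly increasing, and $t_j(p)$ is the unique parameter at which it equals $1$; moreover the entering/exiting-star discussion shows this crossing occurs in the interior of the parameter interval (the value is $<1$ where $p$ sits in $st(v_0)$ and $>1$ where it sits in $st(v_k)$). So the whole task is to upgrade ``unique root for each $p$'' to ``root depends \emph{continuously} on $p$''; the case $k=1$ being vacuous, one assumes $k\ge 2$.

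First I would observe that the evaluation map $\mathrm{ev}\colon\vec{S}_{\mb{d}}^C(I^n)_{st(v_0)}^{st(v_k)}\times I\to I^n$, $(p,t)\mapsto p(t)$, is continuous, since $I$ is compact Hausdorff and the path space is a subspace of $(I^n)^I$ with the compact–open topology; composing with the (piecewise linear, hence continuous) map $m_j\colon I^n\to\mathbb{R}$ yields a continuous function $\Phi_j(p,t)=m_j(p(t))$. Then I would prove continuity of $t_j$ at an arbitrary $p_0$ pointwise: set $s_0:=t_j(p_0)$, pick $\varepsilon>0$ with $[s_0-\varepsilon,s_0+\varepsilon]$ inside the parameter interval, use strict monotonicity to get $\Phi_j(p_0,s_0-\varepsilon)<1<\Phi_j(p_0,s_0+\varepsilon)$, and use continuity of $\Phi_j$ to find an open neighbourhood $U$ of $p_0$ on which both strict inequalities persist. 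For $p\in U$ the intermediate value theorem applied to the continuous, strictly increasing (hence injective) function $\Phi_j(p,\cdot)$ produces exactly one root in $(s_0-\varepsilon,s_0+\varepsilon)$, and by uniqueness of the root of $m_j\circ p=1$ this root must be $t_j(p)$; thus $|t_j(p)-t_j(p_0)|<\varepsilon$ for all $p\in U$.

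I do not expect a genuine obstacle here. The only points requiring a line of care are (i) that the crossing time $t_j(p)$ is interior to the parameter interval for \emph{every} $p$ in the space — which follows, exactly as in the paragraph establishing existence of the $t_j$, from the nesting relations $J_1^{j+1}=J_1^j\cup J_*^j$ and $J_0^{j+1}=J_0^j\setminus J_*^{j+1}$ among the partitions together with $p(0)\in st(v_0)$ and $p(1)\in st(v_k)$ — and (ii) that the root found inside $U$ really is $t_j(p)$, which is immediate once one recalls that $m_j\circ p$ is strictly increasing on the whole interval, so it has a unique root. If anything is mildly delicate, it is bookkeeping about which parameter interval one works over (the taming construction uses $[t_0,t_k]$); but this is harmless, since after an affine reparametrization one may assume the interval is $I$, and the evaluation‑map argument is unaffected.
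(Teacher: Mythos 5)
Your argument is correct and is essentially the paper's own proof: the paper likewise fixes $\varepsilon>0$, considers the open set of strict d-paths $q$ with $m_j(q(t_j(p)-\varepsilon))<1$ and $m_j(q(t_j(p)+\varepsilon))>1$ (exactly the neighbourhood $U$ you obtain from continuity of the evaluation map), and concludes from strict monotonicity that the unique root $t_j(q)$ lies in $(t_j(p)-\varepsilon,t_j(p)+\varepsilon)$. Your added care about the crossing time being interior to the parameter interval is a harmless refinement of the same argument.
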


\begin{proof}
Given a d-path $p\in\vec{S}^C_{\mb{d}}(I^n)$ and $\varepsilon>0$ consider the \emph{open} set of all strict d-paths $q$ satisfying $m_j(q(t_j(p)-\varepsilon ))<1$ and $m_j(q(t_j(p)+\varepsilon ))>1$. Obviously, it contains the path $p$. For a strict d-path satisfying these two inequalities, the solution $t=t_j(q)$ of $m_j(q(t))=1$ is contained in the interval $(t_j(p)-\varepsilon ,t_j(p)+\varepsilon )$.
\end {proof}

\begin{proof}[Proof of Proposition \ref{prop:contr} continued] For a given strict d-path $p\in\vec{S}_{\mb{c}}^C(X)_{x^-}^{x^+}$, fix a presentation (cf Definition \ref{def:present}(3)) and perform the taming construction above cubewise. Because of Remark \ref{rem:taming}(2), the resulting tamed path and d-homotopy does not depend on the chosen presentation. Moreover, the tamed d-paths in subsequent cubes (and the d-homotopies) ``fit'' at top, resp.\ bottom vertices of the cube chain.

Finally, Lemma \ref{lem:cont} and formula (\ref{eq:t(p)}) show that the construction yields a \emph{continuous} taming map $T:\vec{S}_{\mb{c}}^C(X)_{x^-}^{x^+}\to \vec{ST}_{\mb{c}}(X)_{x^-}^{x^+}$ and a continous deformation $H: \vec{S}_{\mb{c}}^C(X)_{x^-}^{x^+}\to (\vec{S}_{\mb{c}}^C(X)_{x^-}^{x^+})^I$ such that $H_0=id$ and $H_1$ is given by $T$ -- composed with an inclusion map.
Moreover, Remark \ref{rem:taming}(3) shows that  $H$ leaves $\vec{ST}_{\mb{c}}(X)$ elementwise fixed.
\end{proof}

\section{Taming is a homotopy equivalence}\label{s:sdp}
\subsection{Proper and non-self-linked $\Box$-sets}
In this section, we deal with a proper non-self-linked $\Box$-set $X$ (cf Definition \ref{df:pcs}(2)). For a such, the taming result ($\textcircled{3}$ in Theorem \ref{thm}) is a consequence of the nerve lemma. 
This is essentially due to the contractibility of several path spaces from Definition \ref{def:subo}:

\begin{proposition}\label{prop:contr2}
Let $X$ denote a proper non-self-linked $\Box$-set and let $\mb{c}, \mb{c}_1,\dots\mb{c}_k$ denote cube chains in $X$ from $x^-$ to $x^+$.
\begin{enumerate}
\item The spaces $\vec{ST}_{\mb{c}}(X)_{x^-}^{x^+}$ and $\vec{T}_{\mb{c}}(X)_{x^-}^{x^+}$ of (strict) d-paths subordinate to $\mb{c}$ are \emph{contractible}.
\item Likewise the spaces $\vec{S}^C_{\mb{c}}(X)_{x^-}^{x^+}$ and $\vec{P}^C_{\mb{c}}(X)_{x^-}^{x^+}$ of (strict) d-paths subordinate to the collar of $\mb{c}$.
\item The intersections $\bigcap_i\vec{S}^C_{\mb{c}_i}(X)_{x^-}^{x^+}$ and $\bigcap_i\vec{P}^C_{\mb{c}_i}(X)_{x^-}^{x^+}$ are contractible if the cube chains $\mb{c}_i$ possess a common refinement and empty otherwise.
\end{enumerate}
\end{proposition}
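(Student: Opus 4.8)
The plan is to reduce everything to the contractibility of the path spaces that live \emph{inside} a cube chain, to transport this along deformation retractions that are already available, and to handle the intersections in (3) by pure cube-chain combinatorics.

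I would first prove (1). Because $X$ is proper, it is loop-free, so the vertex sequence of $\mb{c}$ consists of \emph{distinct} vertices $x^-=x_0,\dots,x_n=x^+$; and because $X$ is non-self-linked, each $c_i$ is an embedded copy of $I^{\dim c_i}$, so the realization $|\mb{c}|$ is a string of cubes glued consecutively at the single vertices $x_1,\dots,x_{n-1}$. Hence a (strict) tame d-path subordinate to $\mb{c}$ is, up to the contractible space of admissible subdivisions and reparametrizations, just an $n$-tuple of (strict) d-paths $p_i\in\vec{P}(I^{\dim c_i})_{\mb{0}}^{\mb{1}}$, and the concatenation map exhibits $\vec{T}_{\mb{c}}(X)_{x^-}^{x^+}$ (resp.\ $\vec{ST}_{\mb{c}}(X)_{x^-}^{x^+}$) as homotopy equivalent to the product $\prod_i\vec{P}(I^{\dim c_i})_{\mb{0}}^{\mb{1}}$ (resp.\ $\prod_i\vec{S}(I^{\dim c_i})_{\mb{0}}^{\mb{1}}$). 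Each factor is contractible: the straight-line homotopy $(s,p_i)\mapsto(1-s)p_i+s\,\delta$ toward the constant-speed diagonal $\delta$ keeps every coordinate non-decreasing, hence stays inside $\vec{P}(I^{\dim c_i})_{\mb{0}}^{\mb{1}}$; and since $\delta$ is strictly increasing, the same homotopy stays inside $\vec{S}(I^{\dim c_i})_{\mb{0}}^{\mb{1}}$ for every $s>0$ while reducing to the given strict path at $s=0$. A finite product of contractible spaces is contractible, which gives (1).

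For (2), the strict case is immediate: Proposition \ref{prop:contr}(1) says $\vec{ST}_{\mb{c}}(X)_{x^-}^{x^+}\hookrightarrow\vec{S}^C_{\mb{c}}(X)_{x^-}^{x^+}$ is a deformation retract, so $\vec{S}^C_{\mb{c}}(X)_{x^-}^{x^+}$ is contractible by (1). For the non-strict space $\vec{P}^C_{\mb{c}}(X)_{x^-}^{x^+}$ I would exhibit a deformation retraction onto $\vec{T}_{\mb{c}}(X)_{x^-}^{x^+}\subseteq\vec{P}^C_{\mb{c}}(X)_{x^-}^{x^+}$ (again contractible by (1)): since $X$ is non-self-linked, the collar $C(\mb{c})$ is a regular neighbourhood of its core $|\mb{c}|$ and carries a cube-wise, \emph{directed} deformation retraction $r\colon I\times C(\mb{c})\to C(\mb{c})$ onto $|\mb{c}|$ which contracts only the ``normal'' coordinates (those squeezed to $0$ or to $1$, via $x\mapsto(1-s)x$ resp.\ $x\mapsto(1-s)x+s$) and keeps the coordinates of each $c_i$ fixed, and hence maps every star $st(x_i)$ into itself. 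Post-composition $D_s(p)=r_s\circ p$ then deforms $\vec{P}^C_{\mb{c}}(X)_{x^-}^{x^+}$ inside itself, and at $s=1$ lands in the set of d-paths whose image lies in the string $|\mb{c}|$, which — because the $c_i$ meet only at the vertices $x_i$ — is exactly $\vec{T}_{\mb{c}}(X)_{x^-}^{x^+}$. This is the step I expect to be the main obstacle: one must verify that $r$ is globally well-defined and directed (the signs of the normal directions are forced to be consistent along any d-path, so the cube-wise pieces glue on the part of the collar that directed paths can traverse), that $r$ respects the star-passage data defining ``subordinate to the collar'' (so that $D_s(p)$ stays in $\vec{P}^C_{\mb{c}}(X)_{x^-}^{x^+}$ and is not merely a d-path with image in $C(\mb{c})$), and that $D$ is continuous. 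The tempting shortcut — strictifying with the flow $\mb{F}$ of Section \ref{ss:vf} — does \emph{not} work here, since $\mb{F}$ pushes coordinates upward and therefore does not preserve the collar boxes $[0,0.5[^{J_0}\times I^{J_*}\times\,]0.5,1]^{J_1}$.

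Finally (3) is combinatorics. If $\mb{c}_1,\dots,\mb{c}_k$ have no common refinement, the intersections are empty: a d-path in $\bigcap_i\vec{S}^C_{\mb{c}_i}(X)_{x^-}^{x^+}$ produces, via Proposition \ref{prop:refine}(3), a cube chain refining every $\mb{c}_i$ (for $\bigcap_i\vec{P}^C_{\mb{c}_i}(X)_{x^-}^{x^+}$ one argues the same way with the merged star-passage sequence, which for proper $X$ is again realized by a cube chain refining all the $\mb{c}_i$). If a common refinement exists, then iterating the pairwise coarsest common refinement of Proposition \ref{prop:refine}(6) — valid because $X$ is non-self-linked — yields a coarsest common refinement $\mb{c}''$ of $\mb{c}_1,\dots,\mb{c}_k$, and iterating Proposition \ref{prop:refine}(5) gives $\bigcap_i\vec{S}^C_{\mb{c}_i}(X)_{x^-}^{x^+}=\vec{S}^C_{\mb{c}''}(X)_{x^-}^{x^+}$ and likewise $\bigcap_i\vec{P}^C_{\mb{c}_i}(X)_{x^-}^{x^+}=\vec{P}^C_{\mb{c}''}(X)_{x^-}^{x^+}$; both are contractible by (2).
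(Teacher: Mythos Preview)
Your arguments for (1), for the strict half of (2), and for (3) are essentially the paper's: contract each cube to its diagonal (the paper makes the ``contractible space of reparametrizations'' precise by first naturalizing to unit $l_1$-speed), invoke the deformation retraction of Proposition~\ref{prop:contr} for $\vec{S}^C_{\mb{c}}$, and for (3) use the coarsest common refinement from Proposition~\ref{prop:refine}(5)--(6). No issues there.

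The interesting divergence is the non-strict half of (2). The paper does \emph{not} build a directed retraction of the collar onto the chain; it simply writes ``apply then Proposition~\ref{prop:strict} (or rather its proof)'', i.e.\ it takes exactly the strictification shortcut you dismiss. Your objection is in fact sound: the flow $\mb{F}$ pushes the $J_0$-coordinates upward and can carry them past $0.5$, so the strictification homotopy need not preserve $\vec{P}^C_{\mb{c}}$. Concretely, for $\mb{c}=(\text{bottom edge},\text{right edge})$ in $I^2$, the d-path $p$ running $(0,0)\to(0,0.49)\to(0.51,0.49)\to(1,1)$ is subordinate to the collar, but $\vec{S}(p)(0.5)=(F(0.5,0),F(0.5,0.49))=(0,\,0.49e^{0.5}/(0.51+0.49e^{0.5}))\approx(0,0.61)$ lies in neither $I\times[0,0.5[$ nor $]0.5,1]\times I$. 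So the paper's one-line reduction is at best incomplete.

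Unfortunately your proposed replacement has the same defect you were worried about: the cube-wise ``normal'' retractions do not glue on the stars of the intermediate vertices, so there is no globally defined directed map $r_s\colon C(\mb{c})\to C(\mb{c})$ to post-compose with. In the same $I^2$ example, on $st((1,0))=\,]0.5,1]\times[0,0.5[$ the retraction onto the bottom edge sends $(x_1,x_2)\mapsto(x_1,(1-s)x_2)$ while the retraction onto the right edge sends $(x_1,x_2)\mapsto((1-s)x_1+s,\,x_2)$; these disagree, so $D_s(p)=r_s\circ p$ is not well-defined and the hand-wave about ``signs forced to be consistent along a d-path'' does not rescue it (a path genuinely sits in the overlap for a nondegenerate interval). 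One way to close the gap is to work at the path level rather than the space level: adapt the taming homotopy of Proposition~\ref{prop:contr} to non-strict paths by replacing ``the unique $t_j$ with $m_j(p(t_j))=1$'' by, say, the midpoint of the (closed) interval $\{t:m_j(p(t))=1\}$ and checking that this choice is continuous in $p$; alternatively, strictify with a flow adapted to the collar (one that fixes the level $x=0.5$ and pushes only within $[0,0.5[$ and $]0.5,1]$ separately). Either route requires the extra verification you flagged; as written, neither your proof nor the paper's is complete for $\vec{P}^C_{\mb{c}}$.
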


\begin{proof}
\begin{enumerate}
\item For $\vec{T}_{\mb{c}}(X)_{x^-}^{x^+}$, this has been observed in \cite[Proposition 6.2(1)]{Ziemianski:17}. In brief, the space of d-paths in a \emph{single} cube from the bottom to the top vertex is contractible (to a constant speed diagonal path joining them, say) by a linear d-homotopy. For paths in a general cube chain, perform first a reparametrization homotopy joining every d-path with its naturalization \cite[Section 2.3]{Raussen:09}, ie, a unit speed path with respect to the  $l_1$-norm along the same trajectory; for more details, we refer to Section \ref{ss:kink}. The space of natural d-paths can then be contracted cubewise.

For $\vec{S}_{\mb{c}}(X)_{x^-}^{x^+}$, one can either go through the same steps for strict d-paths or one can refer to Proposition \ref{prop:strict} (or rather its proof) in the current paper. 
\item For $\vec{S}_{\mb{c}}^C(X)_{x^-}^{x^+}$, this follows from (1) and Proposition \ref{prop:contr}. For $\vec{P}^C_{\mb{c}}(X)_{x^-}^{x^+}$, apply then Proposition \ref{prop:strict} (or rather its proof).
\item follows from (2) and Proposition \ref{prop:refine}(6).
\end{enumerate}
\end{proof}

\begin{theorem}[Nerve lemma]\label{thm:nl}  Let $Z$ denote a paracompact topological space and let $\mathcal{U}$ denote a good covering $Z=\bigcup U_i$ of $Z$ by open sets $U_i\subset X$, ie all non-empty intersections of finitely many sets in $\mathcal{U}$ are contractible. Then $X$ is homotopy equivalent to the nerve of the poset of these non-empty intersections ordered by inclusion.
\end{theorem}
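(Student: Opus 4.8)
Let $P$ be the poset whose elements are the distinct non-empty intersections $U_S=\bigcap_{i\in S}U_i$ of finitely many members of $\mathcal U$, ordered by inclusion, and let $F\colon P\to\mathbf{Top}$ be the diagram sending $U_S$ to the space $U_S$ itself and a relation $U_S\subseteq U_T$ to the inclusion map $U_S\hookrightarrow U_T$. Since $\mathcal U$ is an open cover of $Z$, the colimit of $F$ is $Z$ (each point of $Z$ lies in some $U_S$, and for $z\in U_S\cap U_T$ both $U_S$ and $U_T$ include into $U_{S\cup T}$, so the colimit of $F$ is exactly $Z$). The plan is to factor the comparison through the homotopy colimit of $F$: show that $Z\simeq\hocolim_P F$ and that $\hocolim_P F$ is homotopy equivalent to the nerve of $P$, whence the claim.

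\emph{Step 1: $Z\simeq\hocolim_P F$.} There is a canonical projection $\hocolim_P F\to\colim_P F=Z$. For an open cover of a paracompact space this map is a homotopy equivalence: one chooses a partition of unity $\{\varphi_i\}$ subordinate to $\{U_i\}$ and uses it to build a homotopy section together with the requisite homotopies, arguing by induction over the skeleta of the nerve of $\mathcal U$. This is the situation governed by the projection lemma; cf.\ \cite[Theorem 15.19]{Kozlov:08} and the discussion preceding it. (Paracompactness is essential here; without it one obtains only a weak homotopy equivalence.)

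\emph{Step 2: $\hocolim_P F\simeq\mathrm{nerve}(P)$.} By hypothesis every $U_S$ is contractible, so the unique natural transformation from $F$ to the constant diagram $\underline{\ast}$ at a one-point space is an objectwise homotopy equivalence. The homotopy lemma for homotopy colimits \cite[Theorem 15.12]{Kozlov:08} then yields a homotopy equivalence $\hocolim_P F\xrightarrow{\ \simeq\ }\hocolim_P\underline{\ast}$, and $\hocolim_P\underline{\ast}$ is, by definition, the geometric realization of the nerve of the poset $P$. Combining with Step 1 gives $Z\simeq\mathrm{nerve}(P)$, as asserted. (Since $P$ is the face poset of the usual nerve simplicial complex of $\mathcal U$, its nerve is the barycentric subdivision of the latter, hence homeomorphic to it; so the present formulation agrees with the more familiar one phrased in terms of the nerve of the cover.)

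\emph{Expected main obstacle.} Step 2 is formal once homotopy colimits over posets are available. The real content is Step 1 --- identifying a space with the homotopy colimit of the diagram of intersections of an open cover. This is where paracompactness enters (through the partition of unity) and where one must do honest point-set work: either the skeletal induction sketched above, or an argument that the \v{C}ech diagram of an open cover of a paracompact space is cofibrant enough that the projection is a genuine, and not merely weak, homotopy equivalence. A self-contained alternative bypasses homotopy colimits: use the partition of unity to define the nerve map $f\colon Z\to|N(\mathcal U)|$ by $f(z)=\sum_i\varphi_i(z)\,e_{U_i}$, build a map $g$ in the other direction by induction on the simplices of the order complex of $P$ (extending over the simplex of a chain $U_{S_0}\subsetneq\dots\subsetneq U_{S_k}$ using contractibility of its largest vertex $U_{S_k}$, into which the already-defined restriction of $g$ to the boundary maps), and verify $f\circ g\simeq\mathrm{id}$ and $g\circ f\simeq\mathrm{id}$; the bookkeeping for these two homotopies is the subtle point.
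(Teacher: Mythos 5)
The paper offers no proof of Theorem \ref{thm:nl}: it is imported as a classical result, with the surrounding text only tracing its history (Borsuk, Weil, Leray, McCord) and pointing to textbook proofs in \cite{Hatcher:02} and \cite{Kozlov:08}. Your two-step plan --- identify $Z$ with $\hocolim$ of the diagram of nonempty intersections via the projection lemma, then collapse the contractible pieces via the homotopy lemma --- is precisely the argument the paper itself gestures at: in Proposition \ref{prop:general} it invokes \cite[Theorems 15.19 and 15.12]{Kozlov:08} as the two results ``underlying the proof of the nerve lemma'', so your route coincides with the one the paper has in mind, and you correctly locate the genuine work in Step 1, where paracompactness and partitions of unity enter. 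The only quibble is your closing parenthetical: the poset of \emph{distinct} nonempty intersections is in general a quotient of the face poset of the nerve of $\mathcal{U}$ (different index sets can yield the same intersection), not equal to it; but that remark is not needed for the statement as formulated here, which is about the intersection poset itself.
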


This nerve is a simplicial complex with vertices corresponding to the $U_i$ and $(k-1)$-dimensional simplices corresponding to \emph{non-empty} intersections $\bigcap_{i\in I} U_i$.

It is not completely clear whom to give credit for the nerve lemma originally. It is proved, under a few extra assumptions by Weil in \cite[Section 5]{Weil:52}; Weil refers to Borsuk \cite{Borsuk:46}. The survey paper \cite{Bjorner:95} mentions Leray's \cite{Leray:45} as an even earlier predecessor. A modern formulation and proof  can be found in \cite{McCord:67}. For text book presentations, cf \cite[Corollary 4G.3]{Hatcher:02} or \cite[Theorem 15.21]{Kozlov:08}.\footnote{I would like to thank one of the referees for raising my awareness about the history of the nerve lemma.}
 
\begin{corollary}\label{thm:strict}
\begin{enumerate}
\item The space $\vec{S}(X)_{x^-}^{x^+}$ of strict d-paths is homotopy equivalent to the nerve of a covering $\mc{U}(X)_{x^-}^{x^+}$; likewise,  the space $\vec{ST}(X)_{x^-}^{x^+}$ of strict tame d-paths is homotopy equivalent to the nerve of a covering $\mc{V}(X)_{x^-}^{x^+}$.
\item The two spaces are homotopy equivalent to each other.
\end{enumerate}
\end{corollary}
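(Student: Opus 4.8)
The plan is to verify the hypotheses of the nerve lemma (Theorem~\ref{thm:nl}) for two explicit open coverings, and then observe that the two resulting nerves coincide, so that part (2) drops out of part (1) by composing homotopy equivalences.

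First I would fix the coverings. Let $\mc{U}(X)_{x^-}^{x^+}$ be the family $\{\vec{S}_{\mb{c}}^C(X)_{x^-}^{x^+}\}$ indexed by all cube chains $\mb{c}$ in $X$ from $x^-$ to $x^+$, and let $\mc{V}(X)_{x^-}^{x^+}$ be the family $\{\vec{ST}_{\mb{c}}^C(X)_{x^-}^{x^+}\}$ with the same index set. Since $X$ has finitely many cubes, $|X|$ is a compact metric space, so $X^I$ with the compact-open topology and each of its subspaces are metrizable, hence paracompact, and the paracompactness hypothesis of Theorem~\ref{thm:nl} is met. That $\mc{U}(X)_{x^-}^{x^+}$ covers $\vec{S}(X)_{x^-}^{x^+}$ by \emph{open} subsets is Proposition~\ref{prop:open}(1) together with Proposition~\ref{prop:vs}(2). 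Analogously $\mc{V}(X)_{x^-}^{x^+}$ covers $\vec{ST}(X)_{x^-}^{x^+}$ by open subsets: openness is Proposition~\ref{prop:open}(2), and a strict tame d-path $p$ is in particular strict, so by Proposition~\ref{prop:vs}(2) it is subordinate to the collar of some cube chain $\mb{c}(p)$, whence $p\in\vec{P}_{\mb{c}(p)}^C(X)\cap\vec{ST}(X)=\vec{ST}_{\mb{c}(p)}^C(X)_{x^-}^{x^+}$.

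Next I would check the good-covering condition. For $\mc{U}$ it is exactly Proposition~\ref{prop:contr2}(3): every nonempty finite intersection $\bigcap_i\vec{S}_{\mb{c}_i}^C(X)_{x^-}^{x^+}$ is contractible, and such an intersection is nonempty precisely when the $\mb{c}_i$ admit a common refinement. For $\mc{V}$ one needs the tame counterpart, namely that $\bigcap_i\vec{ST}_{\mb{c}_i}^C(X)_{x^-}^{x^+}$ is contractible whenever the $\mb{c}_i$ have a common refinement. This follows by the same pattern: by the tame analogue of Proposition~\ref{prop:refine}(5)--(6) (announced in Proposition~\ref{prop:refine}(7), and here using that $X$ is proper non-self-linked) the intersection equals $\vec{ST}_{\mb{c}''}^C(X)_{x^-}^{x^+}$ for the coarsest common refinement $\mb{c}''$; that space deformation retracts onto $\vec{ST}_{\mb{c}''}(X)_{x^-}^{x^+}$ by Proposition~\ref{prop:contr}(2), which is contractible by Proposition~\ref{prop:contr2}(1). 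Theorem~\ref{thm:nl} then yields homotopy equivalences $\vec{S}(X)_{x^-}^{x^+}\simeq \mathrm{Nerve}\,\mc{U}(X)_{x^-}^{x^+}$ and $\vec{ST}(X)_{x^-}^{x^+}\simeq \mathrm{Nerve}\,\mc{V}(X)_{x^-}^{x^+}$, which is part~(1).

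Finally, for part~(2) I would identify the two nerves. Both are simplicial complexes whose vertex set is the set of cube chains from $x^-$ to $x^+$, and a finite set $\{\mb{c}_i\}$ of cube chains spans a simplex of $\mathrm{Nerve}\,\mc{U}(X)_{x^-}^{x^+}$ iff $\bigcap_i\vec{S}_{\mb{c}_i}^C(X)_{x^-}^{x^+}\neq\emptyset$, iff the $\mb{c}_i$ possess a common refinement (Proposition~\ref{prop:contr2}(3)), iff $\bigcap_i\vec{ST}_{\mb{c}_i}^C(X)_{x^-}^{x^+}\neq\emptyset$ (by its tame counterpart together with the fact that any common refinement carries a strict tame d-path by Proposition~\ref{prop:vs}(1), which by the tame analogue of Proposition~\ref{prop:refine}(1) lies in every $\vec{ST}_{\mb{c}_i}^C(X)_{x^-}^{x^+}$), iff $\{\mb{c}_i\}$ spans a simplex of $\mathrm{Nerve}\,\mc{V}(X)_{x^-}^{x^+}$. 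Hence the two nerves are equal as simplicial complexes, and composing the two homotopy equivalences from part~(1) gives $\vec{S}(X)_{x^-}^{x^+}\simeq\vec{ST}(X)_{x^-}^{x^+}$. I expect the main obstacle to be the tame analogue of Proposition~\ref{prop:contr2}(3): one has to be careful that intersecting the open collar path spaces with the subspace of tame d-paths still produces the collar path space of the \emph{coarsest} common refinement and that contractibility is preserved, and this is precisely where Proposition~\ref{prop:contr}(2) and the proper non-self-linked hypothesis are needed.
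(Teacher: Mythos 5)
Your proposal is correct and follows essentially the same route as the paper: the same two coverings by collar path spaces indexed by cube chains, openness from Proposition \ref{prop:open}, contractibility of nonempty intersections from Propositions \ref{prop:refine}, \ref{prop:contr} and \ref{prop:contr2}, paracompactness via metrizability, the nerve lemma for part (1), and the identification of the two nerves via the common-refinement criterion for part (2). You merely spell out explicitly the tame/strict-tame verifications that the paper compresses into ``similarly,'' which is a faithful unpacking rather than a different argument.
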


\begin{proof}
The spaces $\vec{S}_{\mb{c}}^C(X)_{x^-}^{x^+},\; \mb{c}\in\mc{C}(X)_{x^-}^{x^+},$ define a covering $\mc{U}(X)_{x^-}^{x^+}$ of $\vec{S}(X)_{x^-}^{x^+}$ (Proposition \ref{prop:vs}(2)) by open (Proposition \ref{prop:open}(2)) and contractible (Proposition \ref{prop:contr2}) sets. Intersections of sets in the covering are empty or contractible by Proposition \ref{prop:contr2}(3).  Hence the covering $\mc{U}(X)_{x^-}^{x^+}$ is good. Similarly, the spaces $\vec{ST}_{\mb{c}}^C(X)_{x^-}^{x^+},\; \mb{c}\in\mc{C}(X)_{x^-}^{x^+},$ define a good covering $\mc{V}(X)_{x^-}^{x^+}$ of $\vec{ST}_{\mb{c}}^C(X)$. 

Moreover, the spaces $\vec{ST}(X)_{x^-}^{x^+}\subset \vec{S}(X)_{x^-}^{x^+}$ are metrizable and thus paracompact, cf \cite[Corollary 3.1]{Raussen:09}. Apply the nerve lemma, Theorem \ref{thm:nl} to show (1) above.

The two coverings correspond to each other through inclusion maps over the  \emph{same poset} and giving rise to the \emph{same nerve}:  By Proposition \ref{prop:refine}(4), its objects can be enumerated by all sets $\{\mb{c}_i\}$ of cube chains in $X$ from $x^-$ to $x^+$ that possess a common refinement (the partial order is generated by refinement and superset). 
\end{proof}

The remaining paragraphs in this section are not important for the main result, but they lead to a far simpler poset with a smaller nerve, better suited for calculations:  The poset category corresponding to the covering $\mc{U}(X)_{x^-}^{x^+}$ is very redundant. By Proposition \ref{prop:refine}(4), its objects can be enumerated by all sets $\{\mb{c}_i\}$ of cube chains in $X$ from $x^-$ to $x^+$ that possess a common refinement (the partial order is generated by refinement and superset). 

\begin{proposition}\label{prop:nerveC}
The nerves of the covering $\mc{U}(X)_{x^-}^{x^+}$ and of the poset category $\mc{C}(X)_{x^-}^{x^+}$ (cf Definition \ref{def:pcc}(4)) are homotopy equivalent. Hence, the spaces $\vec{ST}(X)_{x^-}^{x^+}\subset\vec{S}(X)_{x^-}^{x^+}$ are both homotopy equivalent to the nerve of $\mc{C}(X)_{x^-}^{x^+}$.
\end{proposition}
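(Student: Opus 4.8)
The plan is to show that the nerve of the covering $\mc{U}(X)_{x^-}^{x^+}$ and the nerve of the poset category $\mc{C}(X)_{x^-}^{x^+}$ are homotopy equivalent by exhibiting a zig-zag of functors (or a single cofinality-type argument) between the two posets and invoking Quillen's Theorem A, or equivalently the fact that order-preserving maps with contractible fibers (in the sense of down-sets or up-sets) induce homotopy equivalences on nerves. Recall from Proposition \ref{prop:refine}(4) and the discussion preceding the statement that the objects of the poset $\mc{P}$ underlying $\mc{U}(X)_{x^-}^{x^+}$ are exactly the finite sets $\{\mb{c}_1,\dots,\mb{c}_k\}$ of cube chains from $x^-$ to $x^+$ that admit a common refinement, ordered so that $\{\mb{c}_i\}\le\{\mb{c}'_j\}$ when every $\mb{c}'_j$ refines some $\mb{c}_i$ (equivalently, the corresponding path space $\bigcap\vec{S}^C_{\mb{c}_i}$ contains $\bigcap\vec{S}^C_{\mb{c}'_j}$). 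The key point I would exploit is that, by Proposition \ref{prop:refine}(6), any such set $\{\mb{c}_i\}$ has a \emph{coarsest common refinement} $r(\{\mb{c}_i\})\in\mc{C}(X)_{x^-}^{x^+}$, and by Proposition \ref{prop:refine}(5) this $r$ computes the actual intersection of the corresponding open sets.

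First I would define two order-preserving maps: $r:\mc{P}\to\mc{C}(X)_{x^-}^{x^+}$ sending a family to its coarsest common refinement, and $s:\mc{C}(X)_{x^-}^{x^+}\to\mc{P}$ sending a cube chain $\mb{c}$ to the singleton $\{\mb{c}\}$. One checks $r$ is order-preserving: if $\{\mb{c}_i\}\le\{\mb{c}'_j\}$ then every $\mb{c}'_j$ refines some $\mb{c}_i$, so any common refinement of the $\mb{c}'_j$ refines all $\mb{c}_i$; hence $r(\{\mb{c}'_j\})$ refines $r(\{\mb{c}_i\})$, i.e.\ $r(\{\mb{c}_i\})\le r(\{\mb{c}'_j\})$ in $\mc{C}$. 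Clearly $s$ is order-preserving and $r\circ s=\mathrm{id}$. Then I would apply Quillen's Theorem A (or the ``order homotopy'' lemma, e.g.\ \cite[Theorem 15.19]{Kozlov:08} combined with \cite[Theorem 15.12]{Kozlov:08}) to $r$: for a fixed $\mb{c}\in\mc{C}(X)_{x^-}^{x^+}$, the fiber $r^{-1}(\mc{C}_{\le\mb{c}})=\{\,\{\mb{c}_i\}\in\mc{P}\mid r(\{\mb{c}_i\})\le\mb{c}\,\}$ consists of all families of cube chains each of which is refined by $\mb{c}$, i.e.\ all nonempty finite subsets of the down-set $\{\mb{c}'\in\mc{C}\mid \mb{c}\text{ refines }\mb{c}'\}$ that admit a common refinement (automatically satisfied, since $\mb{c}$ itself is one). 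This fiber contains $\{\mb{c}\}$ as a maximum element (every such family is $\le\{\mb{c}\}$ because $\mb{c}$ refines each member), hence is contractible. Thus $r$ induces a homotopy equivalence on nerves, and $\mathcal{N}(\mc{U}(X)_{x^-}^{x^+})=\mathcal{N}(\mc{P})\simeq\mathcal{N}(\mc{C}(X)_{x^-}^{x^+})$.

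Finally, combining this with Corollary \ref{thm:strict}, which already gives $\vec{S}(X)_{x^-}^{x^+}\simeq\mathcal{N}(\mc{U}(X)_{x^-}^{x^+})$ and $\vec{ST}(X)_{x^-}^{x^+}\simeq\mathcal{N}(\mc{V}(X)_{x^-}^{x^+})$ with $\mc{U}$ and $\mc{V}$ having the same nerve, yields that both spaces are homotopy equivalent to $\mathcal{N}(\mc{C}(X)_{x^-}^{x^+})$, as claimed. I expect the main obstacle to be purely bookkeeping: one must be careful that the ``poset'' $\mc{P}$ is well-defined (sets of cube chains up to the equivalence ``same common refinements'', or one works with the equivalent but cleaner poset of the actual path-space intersections ordered by reverse inclusion, which is canonically isomorphic), and that the direction conventions (refinement vs.\ inclusion of path spaces, which are opposite to each other) are tracked consistently so that the fibers picked out for Theorem A are the contractible ones rather than their opposites. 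Once the directions are fixed, the contractibility of each comma category is immediate from the existence of a terminal (or initial) object, so the argument is short.
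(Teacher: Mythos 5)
Your proposal is correct and follows essentially the same route as the paper: the paper also defines the poset map from $P\mc{U}(X)_{x^-}^{x^+}$ to $\mc{C}(X)_{x^-}^{x^+}$ sending a family of cube chains to its coarsest common refinement (which exists by Proposition \ref{prop:refine}(6)), observes that each fiber contains $\{\mb{c}\}$ as an extremal element and is therefore contractible, and concludes via Quillen's Theorem A. The only cosmetic difference is your additional section $s$ with $r\circ s=\mathrm{id}$ and the maximum-versus-initial phrasing, which is just the expected ordering-convention bookkeeping.
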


\begin{proof}
Let $P\mc{U}(X)_{x^-}^{x^+}$ denote the poset correponding to the covering $\mc{U}(X)_{x^-}^{x^+}$ described in the proof of Corollary \ref{thm:strict} Consider the poset map $P\mc{U}(X)_{x^-}^{x^+}\to\mc{C}(X)_{x^-}^{x^+}$ that associates to a set $\{\mb{c}_i\}$ of cube chains the \emph{coarsest common refinement} of all the $\mb{c}_i$.  The fiber (ie comma category) over any $\mb{c}\in\mc{C}(X)_{x^-}^{x^+}$ has the set $\{\mb{c}\}$ as an initial element, and is thus contractible. Apply Quillen's Theorem A! \cite[page 85]{Quillen:72}, \cite[Theorem 15.28]{Kozlov:08}.
\end{proof}

\subsection{General $\Box$-sets}
I am indebted to K.\ Ziemia\'{n}ski for pointing out to me that Proposition \ref{prop:contr2} is no longer true for non-proper $\Box$-sets.

\begin{example}\label{ex:onecube}
Let $Z_n$ denote the unique $\Box$-set with exactly one cube $c_k,\; 0\le k\le n,$ from Example \ref{ex:cubset}(3). For $n=2$, consider the cube chain $\mb{c}$ consisting of the single cube $c_2$ from $c_0$ to $c_0$. Then $\vec{S}_{\mb{c}}(X)_{c_0}^{c_0}$ is homotopy equivalent to a circle $S^1$ -- and hence not contractible! In fact, it deformation contracts to the subspace of piecewise linear d-paths (cf Section \ref{ss:kink} for details) in $c_2$ connecting $c_0$ with itself through a point on the anti-diagonal (connecting $c_0$ with itself and hence a circle).
\end{example}

For a general $\Box$-set $X$, the proof of Theorem \ref{thm}(1) requires a little more machinery from algebraic topology: Instead of the nerve lemma itself, we apply two more general results in homotopy theory that are used in proving it: the \emph{projection lemma} comparing colimits with homotopy colimits and the \emph{homotopy lemma} comparing homotopy colimits of spaces that can be glued together from pieces that are mutually homotopy equivalent. For a quite elementary presentation, cf eg \cite[Chapter 15]{Kozlov:08}. Using these two results, we can prove that taming is a homotopy equivalence also for general $\Box$-sets:

\begin{proposition}\label{prop:general}
Let $X$ denote a $\Box$-set with selected vertices $x^-, x^+\in X_0$. Then the inclusion map $\iota :\vec{ST}(X)_{x^-}^{x^+}\hookrightarrow \vec{S}(X)_{x^-}^{x^+}$ is a homotopy equivalence.
\end{proposition}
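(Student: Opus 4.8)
The plan is to compare the two spaces via the covering by collars of cube chains, exactly as in the proper case, but working with homotopy colimits rather than (strict) colimits, so that the failure of contractibility of the individual pieces $\vec{S}_{\mb{c}}^C(X)_{x^-}^{x^+}$ (Example \ref{ex:onecube}) no longer obstructs the argument. First I would record that, by Proposition \ref{prop:vs}(2) and Proposition \ref{prop:open}, the spaces $\vec{S}_{\mb{c}}^C(X)_{x^-}^{x^+}$, $\mb{c}\in\mc{C}(X)_{x^-}^{x^+}$, form an open cover of $\vec{S}(X)_{x^-}^{x^+}$, and likewise the $\vec{ST}_{\mb{c}}^C(X)_{x^-}^{x^+}$ form an open cover of $\vec{ST}(X)_{x^-}^{x^+}$; both path spaces are metrizable, hence paracompact. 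Since finite intersections of collar path spaces are again of the form indexed by a finite set of cube chains with a common refinement, both coverings are indexed, as diagrams over $\mathbf{Top}$, by the same poset $P$ of finite sets of cube chains admitting a common refinement, ordered by refinement-and-superset (Proposition \ref{prop:refine}(4) gives the combinatorial matching, which does \emph{not} require properness).

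Next I would invoke the \emph{projection lemma} (\cite[Theorem 15.19]{Kozlov:08}): for a cover of a paracompact space by open sets whose finite intersections are again in the indexing diagram, the natural map $\hocolim_P \vec{S}^C_{\bullet} \to \colim_P \vec{S}^C_{\bullet} = \vec{S}(X)_{x^-}^{x^+}$ is a weak homotopy equivalence, and similarly $\hocolim_P \vec{ST}^C_{\bullet}\to\vec{ST}(X)_{x^-}^{x^+}$. (For the colimit statement one uses that refinement induces inclusions of path spaces, Proposition \ref{prop:refine}(1), together with Proposition \ref{prop:vs}(2), so the colimit is genuinely the union.) It then suffices to show the inclusion of diagrams $\vec{ST}^C_{\bullet}\hookrightarrow\vec{S}^C_{\bullet}$ over $P$ induces a homotopy equivalence on homotopy colimits. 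For this I would apply the \emph{homotopy lemma} (\cite[Theorem 15.12]{Kozlov:08}): a map of diagrams over a fixed poset that is an objectwise homotopy equivalence induces a homotopy equivalence of homotopy colimits. So the whole argument reduces to the objectwise statement: for every finite set $\{\mb{c}_i\}$ of cube chains with a common refinement, the inclusion $\bigcap_i\vec{ST}^C_{\mb{c}_i}(X)_{x^-}^{x^+}\hookrightarrow\bigcap_i\vec{S}^C_{\mb{c}_i}(X)_{x^-}^{x^+}$ is a homotopy equivalence.

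To prove the objectwise claim I would reduce to the case of a single cube chain $\mb{c}$ by taking $\mb{c}$ to be a (fixed) common refinement of the $\mb{c}_i$: a finite intersection of collar path spaces over cube chains sharing refinements is, I expect, either empty or expressible as a collar path space subordinate to a coarsest/suitable common refinement restricted by finitely many open conditions; more robustly, one can argue that the taming deformation of Proposition \ref{prop:contr}, being defined \emph{cubewise} and compatible with refinement (Remark \ref{rem:taming}(2)), restricts to each such intersection. Concretely, the deformation retraction $H$ from the proof of Proposition \ref{prop:contr}(1), $H\colon\vec{S}^C_{\mb{c}}(X)_{x^-}^{x^+}\to(\vec{S}^C_{\mb{c}}(X)_{x^-}^{x^+})^I$, and its tame counterpart in Proposition \ref{prop:contr}(2), are natural enough that they preserve every intersection $\bigcap_i\vec{S}^C_{\mb{c}_i}$ and its tame sublocus, and hence exhibit $\bigcap_i\vec{ST}^C_{\mb{c}_i}\hookrightarrow\bigcap_i\vec{S}^C_{\mb{c}_i}$ as a deformation retract. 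Combining: objectwise equivalence of diagrams $\Rightarrow$ (homotopy lemma) equivalence of homotopy colimits $\Rightarrow$ (projection lemma, twice) $\vec{ST}(X)_{x^-}^{x^+}\simeq\vec{S}(X)_{x^-}^{x^+}$, and a diagram chase shows the equivalence is realized by the inclusion $\iota$.

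The main obstacle I anticipate is precisely the objectwise step — identifying $\bigcap_i\vec{S}^C_{\mb{c}_i}(X)_{x^-}^{x^+}$ cleanly enough that the cubewise taming deformation is visibly well-defined on it, in the absence of a coarsest common refinement (which may fail for non-proper $X$, cf the example after Proposition \ref{prop:refine}). One must check that a d-path lying in the collar of \emph{each} $\mb{c}_i$ is tamed by $H$ into a path still lying in the collar of each $\mb{c}_i$; this should follow because taming only compresses idle coordinates to $0$ or $1$ and stretches active ones, operations that respect every collar containing the path, but making this precise (and checking that loops in cube chains, as in Example \ref{ex:onecube}, cause no trouble since the deformation does not change which cube chain a path is subordinate to) is the delicate point. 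A secondary subtlety is verifying the hypotheses of the projection lemma — that the cover is by open sets and closed under the relevant finite intersections as a \emph{diagram} — but this is already packaged in Propositions \ref{prop:open} and \ref{prop:refine}(4).
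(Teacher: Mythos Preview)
Your overall strategy---projection lemma to replace colimits by homotopy colimits, then the homotopy lemma to compare diagrams objectwise---is precisely what the paper does. But you have over-complicated the indexing and, as you yourself diagnose, left a genuine gap at the intersection step.

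The paper indexes both diagrams directly over the poset $\mc{C}(X)_{x^-}^{x^+}$ of cube chains under refinement, not over your poset $P$ of finite sets of cube chains. With this indexing there is no ``intersection'' node to worry about: the objectwise statement needed for the homotopy lemma concerns a \emph{single} cube chain $\mb{c}$. More to the point, the paper does not try to compare $\vec{ST}^C_{\mb{c}}(X)$ with $\vec{S}^C_{\mb{c}}(X)$ directly. It applies Proposition~\ref{prop:contr} twice, obtaining
\[
\hocolim_{\mc{C}}\vec{S}^C_{\mb{c}}(X)\;\simeq\;\hocolim_{\mc{C}}\vec{S}_{\mb{c}}(X)
\quad\text{and}\quad
\hocolim_{\mc{C}}\vec{ST}^C_{\mb{c}}(X)\;\simeq\;\hocolim_{\mc{C}}\vec{ST}_{\mb{c}}(X),
\]
and then observes the trivial but decisive fact that $\vec{S}_{\mb{c}}(X)_{x^-}^{x^+}=\vec{ST}_{\mb{c}}(X)_{x^-}^{x^+}$: a d-path subordinate to the cube chain itself (not merely its collar) already has a presentation passing through the vertices $x_i$, so it is tame by definition. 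The two hocolims therefore agree on the nose, and no analysis of $\bigcap_i\vec{S}^C_{\mb{c}_i}$ is ever required.

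Your proposed fix for the intersection step---pick a common refinement $\mb{c}$ of the $\mb{c}_i$ and use its taming homotopy---does not work as written. In a non-proper $\Box$-set there may be no coarsest common refinement, and a path $p\in\bigcap_i\vec{S}^C_{\mb{c}_i}$ need not lie in $\vec{S}^C_{\mb{c}}$ for any particular common refinement $\mb{c}$ you choose; the taming homotopy for that $\mb{c}$ is then simply undefined on $p$. The paper's passage through $\vec{S}_{\mb{c}}=\vec{ST}_{\mb{c}}$ is exactly the device that sidesteps this difficulty.
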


\begin{proof} The proof proceeds by a series of homotopy equivalences (denoted $\simeq$) :\\
$\vec{S}(X)_{x^-}^{x^+}=\colim_{\mc{C}_{x^-}^{x^+}}\vec{S}^C_{\mb{c}}(X)_{x^-}^{x^+}\simeq \hocolim_{\mc{C}_{x^-}^{x^+}}\vec{S}^C_{\mb{c}}(X)_{x^-}^{x^+}\simeq\hocolim_{\mc{C}_{x^-}^{x^+}}\vec{S}_{\mb{c}}(X)_{x^-}^{x^+}$.\\ Likewise, $\vec{ST}(X)=\colim_{\mc{C}_{x^-}^{x^+}}\vec{ST}^C_{\mb{c}}(X)_{x^-}^{x^+}\simeq\hocolim_{\mc{C}_{x^-}^{x^+}}\vec{ST}^C_{\mb{c}}(X)_{x^-}^{x^+}\simeq\hocolim_{\mc{C}_{x^-}^{x^+}}\vec{ST}_{\mb{c}}(X)_{x^-}^{x^+}$.

In both cases, the first homotopy equivalence is due to the projection lemma, the second to the homotopy lemma and Propostion \ref{prop:contr}. Paths subordinate to a cube chain $\mb{c}$ are automatically tame, hence $\vec{S}_{\mb{c}}(X)_{x^-}^{x^+}=\vec{ST}_{\mb{c}}(X)_{x^-}^{x^+}$, ie the last two spaces agree.
\end{proof}

\begin{remark}
By far more sophisticated homotopy theoretical methods, Ziemia\'{n}ski proved in \cite{Ziemianski:19} that $\vec{T}(X)_{x^-}^{x^+}$ is homotopy equivalent to the nerve of a Reedy category $Ch(X)$ instead of our poset category $\mc{C}(X)_{x^-}^{x^+}$, also for general $\Box$-sets. He uses a filtration of $\vec{T}(X)_{x^-}^{x^+}$ by differently defined contractible subsets using tame presentations. But these subspaces do \emph{not} define an open covering, and therefore it is not possible to obtain the result by invoking the nerve lemma! Furthermore,  
Ziemia\'{n}ski shows that $\vec{T}(X)_{x^-}^{x^+}\hookrightarrow\vec{P}(X)_{x^-}^{x^+}$ is a deformation retract by a \emph{global} taming construction that is far more tricky than our local one (ie subordinate to the collar of a particular cube chain).
\end{remark}

\section{pl d-paths and spaces of sequences}\label{ss:kink}
In this final section, we restrict again attention to \emph{proper non-self linked} $\Box$-sets (cf Definition \ref{df:pcs}(2-3)). At least for these, it is possible to find an even smaller model describing the homotopy type of the space of all d-paths between two vertices. To this end, consider the intersections $H_k$ of an $n$-cube $I^n$ (and hence of an $n$-cube in a $\Box$-set $X$) with the hyperplanes  given by the equations $x_1+\dots +x_n=k,\; k\in\mb{N}, 0<k<n$; different from the hyperplanes previously considered. Every hyperplane section $H_k$ is an $(n-1)$-dimensional polyhedron with the vertices with $k$ entries $1$ and $n-k$ entries $0$ as extremal points. Requesting certain variables to take the value $0$ or $1$ yields the restriction of these hyperplane sections to faces; on which they again are hyperplane sections. 

Observe that two elements $x_k\in H_k$ and $x_{k+1}\in H_{k+1}$ such that $x_k\le x_{k+1}$ (ie there exists a d-path from $x_k$ to $x_{k+1}$) have $l_1$-distance (aka.\ Manhattan distance) $d_1(x_k,x_{k+1})=1$.

The hyperplane sections $H_k\subset I^n$ are \emph{achronal}: Each d-path $p$ in $I^n$ intersects a hyperplane section $H_k$ in at most one point $p_k\in H_k$; if defined, $d_1(p_k,p_{k+1})=1$. The union of all hyperplane sections $H_k$ corresponding to all cells in the geometric realization of a $\Box$-set $X$ form a subspace $\mc{H}(X)\subset X$. Any d-path between two elements of $\mc{H}(X)$ has \emph{integral} $l_1$-length (which is thus invariant under directed homotopy). The shortest length is called their $l_1$-distance $d_1$; compare \cite[Section 2.2]{Raussen:09} for this concept in greater generality.

A d-path $p: J\to X$ on an interval $J\subset\mb{R}$ is called \emph{natural} if $d_1(p(t_1),p(t_2))=t_2-t_1$ for $t_1,t_2\in J, t_1\le t_2$. The natural $d$-paths from a vertex $x^-$ to a vertex $x^+$ in $X$ with $p(0)=x^-$ form the space $\vec{N}(X)_{x^-}^{x^+}$. All paths in $\vec{N}(X)_{x^-}^{x^+}$ have integral $d_1$-length. A reparametrization linearly adjusting the domain to length one defines an inclusion map $\iota_N: \vec{N}(X)_{x^-}^{x^+}\hookrightarrow\vec{P}(X)_{x^-}^{x^+}$. Analogous to reparametrization by unit speed for curves in differential geometry, one defines a naturalization map $nat$ in the opposite direction: For a path $p\in\vec{P}(X)_{x^-}^{x^+}$, let $l_p: I\to\mb{R}$ denote the non-decreasing function associating to $t\in I$ the $l_1$-length of the restricted path $p|[0,t]$ and define $nat: \vec{P}(X)_{x^-}^{x^+}\to\vec{N}(X)_{x^-}^{x^+}$ by $nat(p)(s)=p(l_p^{-1}(s))$ (well-defined although $l_p^{-1}(s)$ might be an interval!) This map is homotopy inverse to $\iota_N$:

\begin{proposition}\cite[Proposition 2.15 and Proposition 2.16]{Raussen:09}
For a $\Box$-set $X$ with selected vertices $x^-, x^+\in X_0$, the inclusion map $\iota_N: \vec{N}(X)_{x^-}^{x^+}\hookrightarrow\vec{P}(X)_{x^-}^{x^+}$ is a homotopy equivalence.
\end{proposition}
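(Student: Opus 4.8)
The plan is to prove that the naturalization map $nat$ is a homotopy inverse of $\iota_N$. First I would record the two ingredients this rests on, both from \cite{Raussen:09}: that the $l_1$-length functional $p\mapsto L_p:=l_p(1)$ is \emph{continuous} on $\vec{P}(X)_{x^-}^{x^+}$ -- lower semicontinuity is automatic for any metric space, while the reverse estimate is special to $\Box$-sets and uses that a d-path $C^0$-close to $p$ is confined to a neighbourhood of the cubewise image of $p$, on each cube piece of which the $l_1$-length is just the sum of the coordinate increments; and, as a consequence, that $nat$ is continuous, since $p\mapsto p\circ l_p^{-1}$ is well behaved even where $l_p$ is locally constant (there $p$ is locally constant too).

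Next I would dispose of the easy composite. If $q\in\vec{N}(X)_{x^-}^{x^+}$ has $l_1$-length $L$, then $\iota_N(q)$ is $q$ precomposed with $t\mapsto Lt$, so $l_{\iota_N(q)}(s)=Ls$ and hence $nat(\iota_N(q))=q$; thus $nat\circ\iota_N=\mathrm{id}$ on $\vec{N}(X)_{x^-}^{x^+}$ on the nose.

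The substantive step is a homotopy $\iota_N\circ nat\simeq\mathrm{id}$ on $\vec{P}(X)_{x^-}^{x^+}$. Write $\bar p:=\iota_N(nat(p))$ and, when $L_p>0$, let $\mu_p\in\vec{P}(I)$ be the reparametrization $\mu_p(t)=l_p(t)/L_p$ (put $\mu_p=\mathrm{id}$ when $L_p=0$); since $p$ is constant on the level sets of $l_p$, one checks $\bar p\circ\mu_p=p$. I would then set $H(p,u)(s)=\bar p\bigl((1-u)\mu_p(s)+us\bigr)$. For each $(p,u)$ the inner function is a non-decreasing continuous self-map of $I$ fixing the endpoints, so $H(p,u)$ is a monotone reparametrization of the d-path $\bar p$ and therefore lies in $\vec{P}(X)_{x^-}^{x^+}$; and $H(p,0)=p$, $H(p,1)=\iota_N(nat(p))$. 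This exhibits the required homotopy.

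The main obstacle is joint continuity of $H$. Away from constant paths this reduces to continuity of $p\mapsto\bar p$ (i.e.\ of $nat$, granted above) and of $p\mapsto\mu_p$ on the open set $\{L>0\}$, both combined with compactness of $I$. The delicate locus is $\{L_p=0\}$, where $x^-=x^+$, $p$ is constant, and $\mu_{p'}$ for nearby paths $p'$ may oscillate badly; I would argue that continuity survives because $\bar{p'}$ is then uniformly $C^0$-close to the constant $p$, which damps out the behaviour of $\mu_{p'}$. (Alternatively one may invoke the $C^0$-continuity of reparametrization established in \cite{Raussen:09}.) A secondary point to keep honest is the continuity of the $l_1$-length functional itself, which, as noted above, is the technical heart of the cited result.
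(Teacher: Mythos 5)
Your argument is correct and follows essentially the approach the paper intends: the statement is quoted from \cite[Propositions 2.15--2.16]{Raussen:09} without proof, the surrounding text only indicating that the naturalization map $nat$ is the homotopy inverse of $\iota_N$, which is exactly what you establish (the identity $nat\circ\iota_N=\mathrm{id}$ on the nose, and $\iota_N\circ nat\simeq\mathrm{id}$ via the reparametrization homotopy $H$), resting on the continuity of the $l_1$-length functional and of $nat$ from the same reference. One simplification for your ``delicate locus'': since the $l_1$-length of a vertex-to-vertex d-path in a $\Box$-set is a non-negative integer and the length functional is continuous, the set $\{p\mid L_p=0\}$ is clopen in $\vec{P}(X)_{x^-}^{x^+}$ (it is the isolated constant path when $x^-=x^+$ and empty otherwise), so no separate continuity argument is needed there.
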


The natural \emph{tame} d-paths form a subspace with inclusion $\iota_{NT}: \vec{NT}(X)_{x^-}^{x^+}\hookrightarrow\vec{T}(X)_{x^-}^{x^+}$. Naturalization preserves the \emph{trace} of a d-path i.e., its equivalence class up to non-decreasing reparametrization. Hence tame d-paths and tame d-homotopies stay tame under naturalization:

\begin{corollary}\label{cor:nt}
For a $\Box$-set $X$ with selected vertices $x^-, x^+\in X_0$, the inclusion map\\  $\iota_{NT}: \vec{NT}(X)_{x^-}^{x^+}\hookrightarrow\vec{T}(X)_{x^-}^{x^+}$ is a homotopy equivalence.
\end{corollary}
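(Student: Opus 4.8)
The statement to be proved is Corollary \ref{cor:nt}: the inclusion $\iota_{NT}\colon\vec{NT}(X)_{x^-}^{x^+}\hookrightarrow\vec{T}(X)_{x^-}^{x^+}$ is a homotopy equivalence. The natural strategy is to mimic the proof of the preceding Proposition for $\iota_N$, but restricting every map and homotopy involved to the subspace of tame d-paths, and then to verify that everything stays inside that subspace. Concretely, I would argue that the naturalization map $nat$ from the preceding Proposition restricts to a well-defined continuous map $nat_T\colon\vec{T}(X)_{x^-}^{x^+}\to\vec{NT}(X)_{x^-}^{x^+}$, and that both composites $\iota_{NT}\circ nat_T$ and $nat_T\circ\iota_{NT}$ are connected to the respective identity maps by homotopies lying entirely within $\vec{T}(X)_{x^-}^{x^+}$, resp.\ within $\vec{NT}(X)_{x^-}^{x^+}$.

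\textbf{Key steps.} First I would observe the central fact already isolated in the paragraph preceding the corollary: naturalization does not alter the \emph{trace} (the $\le$-equivalence class under non-decreasing reparametrization) of a d-path; it merely reparametrizes. Since tameness of a d-path — the existence of a subdivision of the domain whose subdivision points map to vertices — is a property of the trace (being a vertex is preserved under reparametrization, and a reparametrization carries a tame presentation to a tame presentation), the restriction $nat_T$ is well-defined with image in $\vec{NT}(X)_{x^-}^{x^+}$. Second, continuity of $nat_T$ is inherited from the continuity of $nat$ (stated in the cited Proposition \cite[Proposition 2.16]{Raussen:09}) together with the fact that $\vec{T}$ and $\vec{NT}$ carry the subspace topologies from $\vec{P}$ and $\vec{N}$. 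Third, I would take the two homotopies furnished by the proof of the $\iota_N$ Proposition — one connecting $\mathrm{id}_{\vec{P}}$ with $\iota_N\circ nat$, the other connecting $\mathrm{id}_{\vec{N}}$ with $nat\circ\iota_N$ — and check that each of them, when applied to a tame d-path, produces tame d-paths throughout: these homotopies are built from reparametrizations (linear interpolation between the domain parametrization and the length parametrization), and reparametrizations preserve tameness by the same trace argument. Hence both homotopies restrict to $\vec{T}$, resp.\ $\vec{NT}$, and $nat_T$ is a homotopy inverse to $\iota_{NT}$.

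\textbf{Main obstacle.} The only genuinely non-routine point is verifying that the homotopy inverse and the connecting homotopies from the ambient case genuinely keep tame paths tame — i.e.\ that tameness is invariant under the specific reparametrization homotopies used. This reduces to the assertion that tameness depends only on the trace of a d-path, which in turn rests on the observation that a non-decreasing reparametrization carries the subdivision points of a tame presentation to subdivision points of another tame presentation (the images at those points are the same vertices). A subtlety to handle with care is that $l_p^{-1}(s)$ may be a whole interval when $p$ pauses; but $nat(p)(s)=p(l_p^{-1}(s))$ is still well-defined because $p$ is constant on that interval, and this does not interfere with tameness. Once these bookkeeping points are dispatched, the corollary follows immediately by the 2-out-of-6 / direct homotopy-inverse argument, exactly parallel to (and formally a consequence of the proof of) the Proposition it follows.
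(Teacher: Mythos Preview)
Your proposal is correct and follows essentially the same approach as the paper: the paper's entire argument is the sentence preceding the corollary, namely that naturalization preserves the trace of a d-path, so tame d-paths and tame d-homotopies stay tame under naturalization; your writeup simply spells out this restriction argument in more detail.
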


\begin{remark}\label{rem:hypt} Let $p: J\to X$ on an interval with $\min J=0$ denote a natural $d$-path in $X$.
\begin{enumerate}
\item $p$ intersects $\mc{H}(X)$ exactly at \emph{integral} times: $p(t)\in\mc{H}(X)\Leftrightarrow t\in J\cap\mb{Z}$. 
\item  If $p$ is tame, then $p(i)$ and $p(i+1),\; i$ an integer, are contained in a common cube. A minimal such cube is uniquely determined since $X$ is proper. Moreover, since $X$ is non-self-linked, there is a unique unit speed line segment d-path (of length 1) in this (and any other) cube containing them. 
\end{enumerate}
\end{remark}

A path $p\in\vec{NT}(X)_{x^-}^{x^+}$ with $p(0)=x^-$ is called $PL$ (piecewise linear) if, for every integer $i$ in its domain, the path between $p(i)$ and $p(i+1)$ is given by the unit speed line segment (of $l_1$-length 1) in the minimal cube that contains them both. These $PL$ paths between $x^-$ and $x^+$ form the subspace $\vec{PL}(X)_{x^-}^{x^+}\subset\vec{NT}(X)_{x^-}^{x^+}$. 


\begin{proposition}\label{prop:PL}
Inclusion $\iota_{PL}: \vec{PL}(X)_{x^-}^{x^+}\hookrightarrow\vec{NT}(X)_{x^-}^{x^+}\stackrel{\iota_{NT}}{\hookrightarrow}\vec{T}(X)_{x^-}^{x^+}\hookrightarrow\vec{P}(X)_{x^-}^{x^+}$ is a homotopy equivalence.
\end{proposition}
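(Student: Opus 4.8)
The plan is to reduce the statement to a single, very concrete deformation retraction. By Corollary~\ref{cor:nt} the map $\iota_{NT}\colon\vec{NT}(X)_{x^-}^{x^+}\hookrightarrow\vec{T}(X)_{x^-}^{x^+}$ is a homotopy equivalence, and by Theorem~\ref{thm}(1) (the map $\textcircled{4}$) so is $\vec{T}(X)_{x^-}^{x^+}\hookrightarrow\vec{P}(X)_{x^-}^{x^+}$; hence the composite $\vec{NT}(X)_{x^-}^{x^+}\hookrightarrow\vec{P}(X)_{x^-}^{x^+}$ is one as well. Since $\iota_{PL}$ factors as $\vec{PL}(X)_{x^-}^{x^+}\hookrightarrow\vec{NT}(X)_{x^-}^{x^+}\hookrightarrow\vec{P}(X)_{x^-}^{x^+}$, the $2$-out-of-$3$ property reduces the claim to showing that $\vec{PL}(X)_{x^-}^{x^+}$ is a strong deformation retract of $\vec{NT}(X)_{x^-}^{x^+}$.

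First I would set up the straightening homotopy. A path $p\in\vec{NT}(X)_{x^-}^{x^+}$ is naturally parametrized on an interval $[0,L]$ with $L\in\mb{N}$ (being natural, $p$ covers $l_1$-length equal to the dimension of the cube between two consecutive tame subdivision vertices, so these, and the more general integral-level crossings, occur at integer times). By Remark~\ref{rem:hypt} the values $p(i)$ at integers $0\le i\le L$ lie in $\mc{H}(X)$, consecutive ones $p(i),p(i+1)$ lie in a unique minimal cube of $X$ (here $X$ is used proper), and inside that cube there is a unique unit-$l_1$-speed line segment $\ell_i$ joining them (here $X$ is used non-self-linked), namely $\ell_i(t)=p(i)+(t-i)\bigl(p(i+1)-p(i)\bigr)$ in that cube's coordinates. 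Since $p$ runs through a single cube $c$ from $c_{\mb 0}$ to $c_{\mb 1}$ between two tame subdivision times, each such $\ell_i$ is carried inside the corresponding $c$. I would then define, for $s\in[0,1]$, a path $R(s,p)$ on $[0,L]$ by
\[ R(s,p)(t)=(1-s)\,p(t)+s\,\ell_i(t)\qquad(t\in[i,i+1]), \]
the convex combination being taken inside the cube carrying $p|[i,i+1]$; the pieces match at integer times since $\ell_i(i)=p(i)$ and $\ell_i(i+1)=p(i+1)$.

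The key steps are then the verifications. (i) $R(s,p)$ is a d-path, since a convex combination of coordinatewise non-decreasing paths inside a cube is again such. (ii) $R(s,p)$ is \emph{tame}: on a tame cube-segment $[j,j']$ of $p$ it stays inside the carrying cube $c$ and still runs from $c_{\mb 0}$ to $c_{\mb 1}$, so the tame subdivision of $p$ serves for $R(s,p)$ too. (iii) $R(s,p)$ is \emph{natural}: since the $l_1$-length of a monotone-coordinate path $r|[a,b]$ equals $\sum_k\bigl(r_k(b)-r_k(a)\bigr)$ independently of any partition, on a unit interval the $l_1$-length of $R(s,p)|[a,b]$ is $(1-s)(b-a)+s(b-a)=b-a$, i.e.\ $R(s,p)$ is unit-$l_1$-speed there; combining this with the directed triangle inequality for $d_1$ (cf.\ \cite[Section~2.2]{Raussen:09}) in a double sandwich — first on one unit interval, then chaining through the integer points where $R(s,p)$ agrees with the natural path $p$ — yields $d_1\bigl(R(s,p)(t_1),R(s,p)(t_2)\bigr)=t_2-t_1$ for all $t_1\le t_2$. (iv) $R(0,\cdot)=\mathrm{id}$; $R(s,p)=p$ for all $s$ when $p$ is already $PL$ (then $p|[i,i+1]=\ell_i$); and $R(1,p)\in\vec{PL}(X)_{x^-}^{x^+}$ because on each $[i,i+1]$ it is the unit-speed segment between its own integer values. (v) $R$ is continuous in the compact-open topology: $L$ is locally constant on $\vec{NT}(X)_{x^-}^{x^+}$, the evaluations $p\mapsto p(i)$ are continuous, and forming a line segment between two comparable points of a cube and taking convex combinations are continuous local operations on $|X|$. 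Putting (i)--(v) together, $R$ is a strong deformation retraction of $\vec{NT}(X)_{x^-}^{x^+}$ onto $\vec{PL}(X)_{x^-}^{x^+}$, and the proposition follows.

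The conceptual content sits entirely in step (iii): the point is that straightening and interpolating inside a cube preserves unit $l_1$-speed — by additivity of $l_1$-length over monotone paths — and hence, via the directed triangle inequality, naturality; the tameness and endpoint claims in (i), (ii), (iv) are formal consequences of the cubewise description. I expect the only real nuisance to be the continuity bookkeeping in step (v), namely making precise that the cube carrying $p|[i,i+1]$, and hence the formula for $R(s,p)$, varies continuously with $p$, rather than any genuine difficulty.
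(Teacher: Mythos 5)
Your proposal is correct and follows essentially the same route as the paper: reduce via Theorem \ref{thm} and Corollary \ref{cor:nt} to the first inclusion, then exhibit $\vec{PL}(X)_{x^-}^{x^+}$ as a deformation retract of $\vec{NT}(X)_{x^-}^{x^+}$ by cubewise linearization through the integer-time kink points and the convex-combination homotopy. Your steps (i)--(v) merely spell out verifications the paper leaves implicit (in particular the preservation of naturality by additivity of $l_1$-length over monotone coordinates), and they check out.
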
 

\begin{proof} 
In view of Theorem \ref{thm} and Corollary \ref{cor:nt}, it remains only to show that the first inclusion is a homotopy equivalence. In fact, $\vec{PL}(X)_{x^-}^{x^+}$ is a deformation retract in $\vec{NT}(X)_{x^-}^{x^+}$: To a natural tame d-path $p: J\to X$ associate
the PL d-path $L(p)$ obtained by linearly connecting $p(i)$ with $p(i+1)$ for $i, i+1\in\mb{Z}\cap J$ in the unique minimal cube containing both, cf Remark \ref{rem:hypt}. Observe that $p=L(p)$ if $p$ is $PL$. The (natural) convex combination homotopy joining $p\in\vec{NT}(X)_{x^-}^{x^+}$ with $L(p)$ shows that the linearization map $L$ thus defined is a homotopy inverse to the first inclusion map. It restricts to the constant homotopy on $\vec{PL}(X)_{x^-}^{x^+}$.
\end{proof}

The only data needed to describe $PL$ d-paths are the \emph{kink points} $p(i)$ in hyperplane sections in the cubes they traverse: \\
The space $Seq(X)_{x^-}^{x^+}$ is  defined as the space of all finite sequences $(x_0=x^-,\dots ,x_n=x^+)$ in $\bigcup_{n\ge 0} \mc{H}(X)^{n+1}$ with $x_i, x_{i+1}$ in a common cube such that $x_i\le x_{i+1}$ and 
$d_1(x_i,x_{i+1})=1$. 

With this definition, we obtain

\begin{proposition}
Let $X$ be a proper non-self-linked $\Box$-set with selected vertices $x^-, x^+\in X_0$. 
\begin{enumerate}
\item $\vec{PL}(X)_{x^-}^{x^+}$ and $Seq(X)_{x^-}^{x^+}$ are homeomorphic.
\item $\vec{P}(X)_{x^-}^{x^+}$ and $Seq(X)_{x^-}^{x^+}$ are homotopy equivalent.
\end{enumerate}
\end{proposition}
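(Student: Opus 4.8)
The plan is to prove (1) by writing down an explicit mutually inverse pair of continuous maps between $\vec{PL}(X)_{x^-}^{x^+}$ and $Seq(X)_{x^-}^{x^+}$, and then to obtain (2) immediately by combining (1) with Proposition \ref{prop:PL}.

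For (1), think of a $PL$ d-path $p$ in its natural ($l_1$-arclength) parametrization on $[0,n]$, where $n$ is its $l_1$-length. By Remark \ref{rem:hypt}, the values $p(0),\dots,p(n)$ lie in $\mc{H}(X)$, consecutive ones $p(i),p(i+1)$ lie in a common cube with a unique minimal representative (properness), satisfy $p(i)\le p(i+1)$, and $d_1(p(i),p(i+1))=1$; so $\Phi\colon p\mapsto(p(0),\dots,p(n))$ lands in $Seq(X)_{x^-}^{x^+}$. Conversely, given $(x_0,\dots,x_n)\in Seq(X)_{x^-}^{x^+}$, let $c^{(i)}$ be the minimal cube containing $x_i$ and $x_{i+1}$; since $X$ is non-self-linked the realization of $c^{(i)}$ is an embedded cube, so there is a unique unit-speed ($l_1$-length $1$) straight line segment d-path from $x_i$ to $x_{i+1}$ inside it (Remark \ref{rem:hypt} again). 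Concatenating these segments over $i=0,\dots,n-1$ produces a natural tame $PL$ d-path $\Psi(x_0,\dots,x_n)$ from $x^-$ to $x^+$. By the very definition of a $PL$ path, $p$ equals the concatenation of the unit-speed segments joining its consecutive integer values, so $\Psi\circ\Phi=\mathrm{id}$; and the integer-time values of $\Psi(x_0,\dots,x_n)$ are the $x_i$, so $\Phi\circ\Psi=\mathrm{id}$.

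It remains to see that $\Phi$ and $\Psi$ are continuous. Both spaces split as disjoint unions indexed by $n$ (the number of terms of the sequence minus one, respectively the integral $l_1$-length of the path; in either case locally constant); on the summand of index $n$, $\Phi$ is $p\mapsto(\mathrm{ev}_0(p),\dots,\mathrm{ev}_n(p))$, which is continuous into $\mc{H}(X)^{n+1}\supseteq Seq(X)_{x^-}^{x^+}$. For $\Psi$ one uses that affine interpolation inside a fixed cube depends continuously on the endpoints, together with the only point that needs care: when the sequence is perturbed the carrier cube $c^{(i)}$ of a segment can jump, but such a jump is always to a coface $c^{(i)}\hookrightarrow c'^{(i)}$ along an affine face inclusion (non-self-linkedness makes this an embedding of cubes), so a line segment in $c^{(i)}$ with slightly moved endpoints is a nearby line segment in $c'^{(i)}$. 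Hence $\Psi$ is continuous for the compact-open topology on $\vec{PL}(X)_{x^-}^{x^+}\subset X^{[0,n]}$, and $\Phi$ is a homeomorphism. This carrier-jump bookkeeping is the only genuinely non-formal step; everything else is an unwinding of definitions, with properness used exactly to make the minimal common cube of $x_i,x_{i+1}$ unique and non-self-linkedness used to make ``the unit-speed segment in that cube'' unambiguous and continuous.

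Part (2) is then one line: by Proposition \ref{prop:PL} the composite inclusion $\vec{PL}(X)_{x^-}^{x^+}\hookrightarrow\vec{P}(X)_{x^-}^{x^+}$ is a homotopy equivalence, and by (1) $\vec{PL}(X)_{x^-}^{x^+}\cong Seq(X)_{x^-}^{x^+}$, whence $\vec{P}(X)_{x^-}^{x^+}\simeq Seq(X)_{x^-}^{x^+}$.
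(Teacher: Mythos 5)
Your proof is correct and follows essentially the same route as the paper: the paper's proof simply declares the forgetful map $p\mapsto(p(i))_i$ to be a homeomorphism with inverse given by unit-speed linear interpolation in the minimal cubes, and deduces (2) from Proposition \ref{prop:PL}. The only difference is that you spell out the continuity of the interpolation map (the possible jump of the carrier cube to a coface), a point the paper leaves implicit.
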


\begin{proof} 
\begin{enumerate}
\item The forgetful map that associates to a path $p\in\vec{PL}(X)_{x^-}^{x^+}$ the sequence of kink points 
$(p(i))\in Seq(X)_{x^-}^{x^+}$  -- with $i$ running through the integers in its domain -- is a homeomorphism. Its inverse is the map that associates to a sequence in $Seq(X)_{x^-}^{x^+}$ the $PL$-path that connects any two subsequent elements in that sequence by the unit speed line segment in the unique minimal cube containing them both.
\item follows trom (1) and Proposition \ref{prop:PL}.
\end{enumerate}
\end{proof}

\begin{example}
Let $X=\partial I^3$ denote the boundary of a 3-cube from Section \ref{sss:ex}. The hyperplane sections (diagonal lines) in the six boundary squares form two triangles, cf Figure \ref{fig:hypp}. The associated pairs (= sequences) of kink points between the bottom and the top vertex form a hexagon, homotopy equivalent to the circle $S^1$.
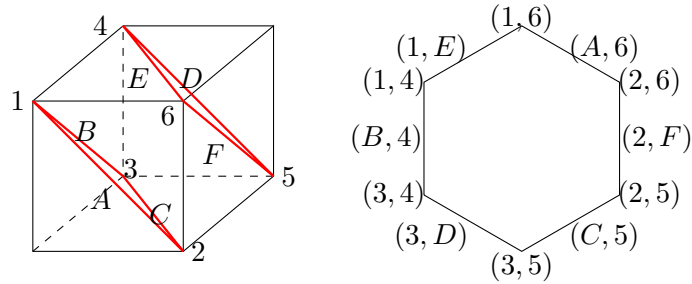
\begin{figure}[h]
\begin{tikzpicture}
\draw (0,0) rectangle (2,2);
\draw (2,0) -- (3.2,1) -- (3.2,3) -- (2,2);
\draw (3.2,3) -- (1.2,3) -- (0,2);
\draw [dashed] (0,0) -- (1.2,1) -- (1.2,3);
\draw [dashed] (1.2,1) -- (3.2,1);
\draw [red, line width= 0.3mm] (0,2) -- (2,0) -- (1.2,1) -- (0,2);
\draw [red, line width= 0.3mm] (2,2) -- (3.2,1) -- (1.2,3) -- (2,2);
\node at (-0.2,2) {$1$};
\node at (2.2,0) {$2$};
\node at (1.3,1.1) {$3$};
\node at (0.9,3) {$4$};
\node at (1.8,1.8) {$6$};
\node at (3.4,1) {$5$};
\node at (0.9,0.7) {$A$};
\node at (1.7,0.5) {$C$};
\node at (0.7,1.6) {$B$};
\node at (2.1,2.3) {$D$};
\node at (2.4,1.3) {$F$};
\node at (1.4,2.3) {$E$};
\draw (5.2,0.75) -- (6.5,0) -- (7.8,0.75) -- (7.8,2.25) -- (6.5,3) -- (5.2,2.25) -- (5.2,0.75);
\node at (4.8,2.25) {$(1,4)$};
\node at (6.5,3.1) {$(1,6)$};
\node at (8.2,2.25) {$(2,6)$};
\node at (8.2,0.75) {$(2,5)$};
\node at (6.5,-0.2) {$(3,5)$};
\node at (4.8,0.75) {$(3,4)$};
\node at (5.3,2.7) {$(1,E)$};
\node at (4.7,1.5) {$(B,4)$};
\node at (5.3,0.2) {$(3,D)$};
\node at (7.6,0.2) {$(C,5)$};
\node at (8.3,1.5) {$(2,F)$};
\node at (7.6,2.7) {$(A,6)$};
\end{tikzpicture}
\caption{Hyperplane sections (=red lines) in the boundary $\partial I^3$ of a 3-cube and associated pairs of kink points. For example $(A,6)$ indicates that from a point on the interior of $A$ you can reach only $6$ by a tame path; from its end points $1$, resp.\ $2$ you can reach $E$, resp.\ $F$.}\label{fig:hypp}
\end{figure}

\end{example}

\begin{remark}
Extending the results of this section to a general $\Box$-set $X$ seems to be more intricate. The main reason is that two elements in successive hyperplane sections may be joined by more than one unit speed line segment paths -- through different cubes if $X$ is not proper and/or through the same cube if $X$ is self-linked. For example, two vertices in subsequent hyperplane sections may be connected by various edges, after identification of vertices.

It seems to be necessary to replace the cube chains from this paper by the cube chains $Ch(X)$ in Ziemia\'{n}ski's paper \cite{Ziemianski:19}; those are generated by cubical \emph{maps} from a wedge of cubes into $X$. Hyperplane sections in $X$ should then be replaced by hyperplane sections in a wedge of cubes. In such a wedge of cubes (which is obviously both proper and non-self-linked), there is again a well-defined unit speed line segment between points on consecutive hyperplane sections.
\end{remark}

Competing interests: The author declares none.

\end{document}